\renewcommand\a{\alpha}
\renewcommand\b{\beta}
\newcommand\g{\gamma}
\renewcommand\d{\delta}
\newcommand\la{\lambda}
\renewcommand\th{\theta}
\newcommand\x{\chi}
\newcommand\vf{\varphi}
\newcommand\Om{\Omega}
\newcommand\w{\omega}
\newcommand\vL{\varLambda}
\newcommand{\vT}{\varTheta}
\newcommand\ve{\varepsilon}
\newcommand\Ql{\bar{\mathbf Q}_l}
\newcommand\BQ{\mathbf Q}
\newcommand\BF{\mathbf F}
\newcommand\BR{\mathbf R}
\newcommand\BZ{\mathbf Z}
\newcommand\Bla{\boldsymbol\lambda}
\newcommand\Bmu{\boldsymbol\mu}
\newcommand\Bnu{\boldsymbol\nu}
\newcommand\CP{\mathcal{P}}
\newcommand\SB{\mathscr{B}}
\newcommand\SG{\mathscr{G}}
\newcommand\SM{\mathscr{M}}
\newcommand\SO{\mathscr{O}}
\newcommand\SP{\mathscr{P}}
\newcommand\SH{\mathscr{H}}
\newcommand\SX{\mathscr{X}}
\newcommand\FS{\mathfrak S}
\newcommand\Fu{\mathfrak u}
\newcommand\Fv{\mathfrak v}
\newcommand\Fg{\mathfrak g}
\newcommand\Fl{\mathfrak l}
\newcommand\iv{^{-1}}
\newcommand\wt{\widetilde}
\newcommand\wg{^{\wedge}}
\newcommand\ol{\overline}
\newcommand\hra{\hookrightarrow}
\newcommand\lra{\leftrightarrow}
\newcommand\IC{\operatorname{IC}}
\newcommand\End{\operatorname{End}}
\newcommand\Ind{\operatorname{Ind}}
\newcommand\Lie{\operatorname{Lie}}
\newcommand\nil{_{\operatorname{nil}}}
\newcommand\lp{\operatorname{\!\langle\!}}
\newcommand\rp{\operatorname{\!\rangle\!}}
\newcommand{\isom}{\,\raise2pt\hbox{$\underrightarrow{\sim}$}\,}
\numberwithin{equation}{section}
\newtheorem{thm}{Theorem}[section]
\newtheorem{lem}[thm]{Lemma}
\newtheorem{cor}[thm]{Corollary}
\newtheorem{prop}[thm]{Proposition}
\def \para#1{\par\medskip\textbf{#1}
              \addtocounter{thm}{1}}
\def \remark#1{\par\medskip\noindent
                \textbf{Remark #1}
                \addtocounter{thm}{1}}
\begin{document}
\setlength{\baselineskip}{4.9mm}
\setlength{\abovedisplayskip}{4.5mm}
\setlength{\belowdisplayskip}{4.5mm}
\renewcommand{\theenumi}{\roman{enumi}}
\renewcommand{\labelenumi}{(\theenumi)}
\renewcommand{\thefootnote}{\fnsymbol{footnote}}
\renewcommand{\thefootnote}{\fnsymbol{footnote}}
\allowdisplaybreaks[2]
\parindent=20pt

\pagestyle{myheadings}
\medskip
\begin{center}
 {\bf Double Kostka polynomials and Hall bimodule} 
\end{center}

\vspace{1cm}
\begin{center}
Liu Shiyuan and Toshiaki Shoji
\\
\vspace{0.7cm}
\title{}
\end{center}

\begin{abstract}
Double Kostka polynomials $K_{\Bla,\Bmu}(t)$ are polynomials in $t$, 
indexed by double partitions $\Bla, \Bmu$.  
As in the ordinary case, $K_{\Bla, \Bmu}(t)$
is defined in terms of Schur functions $s_{\Bla}(x)$ and Hall-Littlewood 
functions $P_{\Bmu}(x;t)$.  
In this paper, we study combinatorial properties of $K_{\Bla,\Bmu}(t)$ 
and $P_{\Bmu}(x;t)$.  In particular, we show that the 
Lascoux-Sch\"utzenberger type formula holds for $K_{\Bla,\Bmu}(t)$ in the case where
$\Bmu = (-;\mu'')$.    
Moreover, we show that the Hall bimodule $\SM$ introduced by Finkelberg-Ginzburg-Travkin
is isomorphic to the ring of symmetric functions (with two types of variables) and 
the natural basis $\Fu_{\Bla}$ of $\SM$ is sent to $P_{\Bla}(x;t)$ (up to scalar)
under this isomorphism. This gives an alternate approach for their result. 
\end{abstract}

\maketitle
\markboth{LIU AND SHOJI}{DOUBLE KOSTKA POLYNOMIALS}
\pagestyle{myheadings}

\begin{center}
{\sc Introduction}
\end{center}
\par\medskip

Kostka polynomials $K_{\Bla,\Bmu}(t)$, indexed by 
double partitions $\Bla, \Bmu$, were introduced in [S1, S2] as a generalization of
ordinary Kostka polynomials $K_{\la,\mu}(t)$ indexed by partitions $\la, \mu$.  
In this paper, we call them double Kostka polynomials.
Let $\vL = \vL(y)$ be the ring of symmetric functions with respect to the variables 
$y = (y_1, y_2, \dots)$ over $\BZ$. 
We regard $\vL\otimes \vL$ as the ring of 
symmetric functions $\vL(x^{(1)}, x^{(2)})$ with respect to 
two types of variables $x = (x^{(1)}, x^{(2)})$. 
Schur functions $\{s_{\Bla}(x)\}$ gives a basis of $\vL\otimes\vL$. In [S1, S2],
the function $P_{\Bmu}(x;t)$ indexed by a double partition $\Bmu$ 
was defined, as a generalization of the ordinary 
Hall-Littlewood function $P_{\mu}(x;t)$ indexed by a partition $\mu$. 
$\{P_{\Bmu}(x;t)\}$ gives a basis of $\BZ[t]\otimes_{\BZ}(\vL\otimes\vL)$, 
and as in the ordinary case, 
$K_{\Bla, \Bmu}(t)$ is defined as the coefficient of the transition matrix between 
two basis $\{ s_{\Bla}(x)\}$ and $\{P_{\Bmu}(x;t)\}$.  
\par
After the combinatorial introduction of $K_{\Bla, \Bmu}(t)$ in [S1, S2], 
Achar-Henderson [AH] gave a geometric interpretation of double Kostka polynomials 
in terms of the intersection cohomology associated to the closure of orbits in 
the enhanced nilpotent cone, which is a natural generalization of the classical 
result of Lusztig [L1]  that Kostka polynomials are interpreted by the intersection 
cohomology associated to the closure of nilpotent orbits in $\Fg\Fl_n$.
At the same time, Finkelberg-Ginzburg-Travkin [FGT] studied the convolution algeba 
associated to the affine Grassmannian in connection with double Kostka polynomials and 
the geometry of the enhanced nilpotent cone. In particular, they introduced the Hall 
bimodule $\SM$ (the mirabolic Hall bimodule in their terminology) as a generalization 
of the Hall algebra, and showed that $\SM$ is isomorphic to $\vL\otimes \vL$ over 
$\BZ[t, t\iv]$, and $P_{\Bla}(x;t)$ is obtained as the image of the natural basis 
$\Fu_{\Bla}$ of $\SM$. 
\par
In this paper, we study the combinatorial properties of $K_{\Bla,\Bmu}(t)$ and 
$P_{\Bmu}(x;t)$.  In particular, we show that the Lascoux-Sch\"utzenberger type formula
holds for $K_{\Bla, \Bmu}(t)$ in the case where $\Bmu = (-;\mu'')$ (Theorem 3.12).  
Moreover, in Theorem 4.7, we give a more direct proof for the above mentioned result
of [FGT] (in the sense that we don't appeal to the convolution algerba associated to 
the affine Grassmannian). 
\par
In the appendix, we give tables of double Kostaka polynomials for $2 \le n \le 5$, 
where $n$ is the size of double partitions.
The authors are grateful to J. Michel for the computer computation of those polynomials.

\section{Double Kostka polynomials} 

\para{1.1.}
First we recall basic properties of Hall-Littlewood functions and Kostka polynomials
in the original setting, following [M]. 
Let $\vL = \vL(y) = \bigoplus_{n \ge 0}\vL^n$ be the ring of symmetric functions 
over $\BZ$ with respect to the variables $y = (y_1, y_2, \dots)$, where $\vL^n$ denotes the 
free $\BZ$-module of symmetirc functions of degree $n$. 
We put $\vL_{\BQ} = \BQ\otimes_{\BZ}\vL$, $\vL^n_{\BQ} = \BQ\otimes_{\BZ}\vL^n$. 
For a partition $\la = (\la_1, \la_2, \dots, \la_k)$, put $|\la| = \sum_{i=1}^k\la_i$.
Let $\SP_n$ be the set of partitions of $n$, i.e. the set of $\la$ such that 
$|\la| = n$.
Let $s_{\la}$ be the Schur function associated to $\la \in \SP_n$.  Then 
$\{ s_{\la} \mid \la \in \SP_n \}$ gives a $\BZ$-baisis of $\vL^n$. 
Let $p_{\la} \in \vL^n$ be the power sum symmetric funtion associated to $\la$.
Then $\{ p_{\la} \mid \la \in \SP_n \}$ gives a $\BQ$-basis of 
$\vL^n_{\BQ}$. 
For $\la = (1^{m_1}, 2^{m_2}, \dots) \in \SP_n$, define an integer $z_{\la}$ by 
\begin{equation*}
\tag{1.1.1}
z_{\la} = \prod_{i \ge 1}i^{m_i}m_i!.
\end{equation*} 
Following [M, I], we introduce a scalar product on $\vL_{\BQ}$ by 
$\lp p_{\la}, p_{\mu} \rp = \d_{\la\mu}z_{\la}$.  
It is known that $\{s_{\la}\}$ form  an orthonormal basis of $\vL$. 

\para{1.2.}
Let $P_{\la}(y;t)$ be the Hall-Littlewood function associated to a partition $\la$.
Then $\{ P_{\la} \mid \la \in \SP_n \}$ gives a $\BZ[t]$-basis of 
$\vL^n[t] = \BZ[t]\otimes_{\BZ}\vL^n$, where $t$ is an indeterminate.
$P_{\la}$ enjoys a property that 
\begin{equation*}
\tag{1.2.1}
P_{\la}(y;0) = s_{\la},  \quad 
P_{\la}(y;1) = m_{\la},
\end{equation*}
where $m_{\la}(y)$ is a monomial symmetric function associated 
to $\la$.  
Kostka polynomials $K_{\la, \mu}(t) \in \BZ[t]$ ($\la, \mu \in \SP_n$) are defined by the formula
\begin{equation*}
\tag{1.2.2}
s_{\la}(y) = \sum_{\mu \in \CP_n}K_{\la,\mu}(t)P_{\mu}(y;t).
\end{equation*}  
\par
Recall the dominance order $\la \le \mu$ in $\SP_n$, which is defined by the condition 
$\la \le \mu$ if and only if $\sum_{j= 1}^i\la_j \le \sum_{j = 1}^i\mu_j$ for 
each $i \ge 1$. 
For each partition $\la = (\la_1, \dots, \la_k)$, we define an integer 
$n(\la)$ by $n(\la) = \sum_{i=1}^k(i-1)\la_i$. 
It is known that $K_{\la,\mu}(t) = 0$ unless $\la \ge \mu$, and that 
$K_{\la,\mu}(t)$ is a monic of degree $n(\mu) - n(\la)$ if $\la \ge \mu$
([M, III, (6.5)]).  

\par
For $\la = (\la_1, \dots, \la_k) \in \SP_n$ with $\la_k > 0$, 
we define $z_{\la}(t) \in \BQ(t)$ by 
\begin{equation*}
\tag{1.2.3}
z_{\la}(t) = z_{\la}\prod_{i \ge 1}(1 - t^{\la_i})\iv,
\end{equation*}
where $z_{\la}$ is as in (1.1.1). 
Following [M, III], we introduce a scalar product on $\vL_{\BQ}(t) = \BQ(t)\otimes_{\BZ}\vL$ by 
$\lp p_{\la}, p_{\mu} \rp = z_{\la}(t)\d_{\la,\mu}$.
Then $P_{\la}(y;t)$ form an orthogonal basis of $\vL[t] = \BZ[t]\otimes_{\BZ}\vL$.
In fact, they  are characterized by the following two properties
([M, III, (2.6) and (4.9)]);
\begin{equation*}
\tag{1.2.4}
P_{\la}(y;t) = s_{\la}(x) + \sum_{\mu < \la}w_{\la\mu}(t)s_{\mu}(x)
\end{equation*}
with $w_{\la\mu}(t) \in \BZ[t]$ , and
\begin{equation*}
\tag{1.2.5}
\lp P_{\la}, P_{\mu} \rp = 0 \text{ unless $\la = \mu$. } 
\end{equation*}

\para{1.3.}
Let  
$\Xi = \Xi(x) = \vL(x^{(1)})\otimes\vL(x^{(2)})$ be the ring of symmetric functions over $\BZ$ 
with respect to variables $x = (x^{(1)}, x^{(2)})$, where 
$x^{(1)} = (x^{(1)}_1, x^{(1)}_2, \dots), x^{(2)} = (x^{(2)}_1, x^{(2)}_2, \dots)$.
We denote it as $\Xi = \bigoplus_{n \ge 0}\Xi^n$, similarly to the case of $\vL$. 
  Let $\SP_{n,2}$ be the set of double 
partitions $\Bla = (\la', \la'')$ such that $|\la'| + |\la''| = n$. 
For $\Bla = (\la', \la'') \in \SP_{n,2}$, we define a Schur function
$s_{\Bla}(x) \in \Xi^n$ by 
\begin{equation*}
\tag{1.3.1}
s_{\Bla}(x) = s_{\la'}(x^{(1)})s_{\la''}(x^{(2)}).
\end{equation*}
Then 
$\{ s_{\Bla} \mid \Bla \in \SP_{n,2} \}$ gives a $\BZ$-basis of $\Xi^n$. 
For an integer $r \ge 0$, put 
$p_r^{(1)} = p_r(x^{(1)}) + p_r(x^{(2)})$, and 
$p_r^{(2)} = p_r(x^{(1)}) - p_r(x^{(2)})$, where $p_r$ is the power sum symmetric 
function in $\vL$. . 
For $\Bla \in \SP_{n,2}$, we define $p_{\Bla}(x) \in \Xi^n$ by 
\begin{equation*}
\tag{1.3.2}
p_{\Bla} = \prod_ip^{(1)}_{\la'_i}\prod_jp^{(2)}_{\la''_j},
\end{equation*}  
where $\Bla = (\la', \la'')$ such that $\la' = (\la'_1, \la'_2, \dots, \la'_{k'})$, 
$\la'' = (\la''_1, \la''_2, \dots, \la''_{k''})$ with $\la'_{k'}, \la''_{k''} > 0$. 
Then $\{ p_{\Bla} \mid \Bla \in \SP_{n,2} \}$ gives a $\BQ$-basis of 
$\Xi^n_{\BQ}$. 
For $\Bla \in \SP_{n,2}$, 
we define 
functions $z^{(1)}_{\Bla}(t), z^{(2)}_{\Bla}(t) \in \BQ(t)$ by  
\begin{equation*}
\tag{1.3.3}
z_{\Bla}^{(1)}(t) = \prod_{j = 1}^{k'}(1 - t^{\la_j'})\iv, \qquad
z_{\Bla}^{(2)}(t) = \prod_{j = 1}^{k''}(1 + t^{\la''_j})\iv.
\end{equation*}
For $\Bla \in \SP_{n,2}$, 
we define an integer $z_{\Bla}$ by $z_{\Bla} = 2^{k' + k''}z_{\la'}z_{\la''}$.  
We now define a function $z_{\Bla}(t) \in \BQ(t)$ by 
\begin{equation*}
\tag{1.3.4}
z_{\Bla}(t) = z_{\Bla}z_{\Bla}^{(1)}(t)z_{\Bla}^{(2)}(t).
\end{equation*}
Let $\Xi[t] = \BZ[t]\otimes_{\BZ}\Xi$ be the free $\BZ[t]$-module, 
and  $\Xi_{\BQ}(t) = \BQ(t)\otimes_{\BZ}\Xi$ be the $\BQ(t)$-space. 
Then $\{ p_{\Bla}(x) \mid \Bla \in \SP_{n,2} \}$ gives a basis of $\Xi^n_{\BQ}(t)$.
We define a scalar product on $\Xi_{\BQ}(t)$ by 
\begin{equation*}
\lp p_{\Bla}, p_{\Bmu} \rp = \d_{\Bla,\Bmu}z_{\Bla}(t).
\end{equation*} 
\par
We express a double partition $\Bla = (\la', \la'')$ as 
$\la' = (\la'_1, \dots, \la'_k), \la'' = (\la''_1, \dots, \la''_k)$ with some $k$, 
by allowing zero on parts $\la'_i, \la''_i$. We define a composition 
$c(\Bla)$ of $n$ by 
\begin{equation*}
c(\Bla) = (\la'_1, \la''_1, \la'_2, \la''_2, \dots, \la'_k, \la''_k).
\end{equation*} 
We define a partial order $\Bla \ge \Bmu$ on $\SP_{n,2}$ by the the condition 
$c(\Bla) \ge c(\Bmu)$, where $\ge $ is the dominance order on the set of 
compositions of $n$ defined in a similar way as in the case of partitions. 
\par
The following fact is known.

\begin{prop}[{[S1, S2]}] 
There exists a unique function $P_{\Bla}(x;t) \in \Xi_{\BQ}[t]$ satisfying the following properties.
\begin{enumerate}
\item
$P_{\Bla}$ is expressed as a linear combination of Schur functions $s_{\Bmu}$ as 
\begin{equation*}
P_{\Bla}(x;t) = s_{\Bla}(x) + \sum_{\Bmu < \Bla}u_{\Bla, \Bmu}(t)s_{\Bmu}(x) 
\end{equation*}
with $u_{\Bla, \Bmu}(t) \in \BQ(t)$.
\item
\ $\lp P_{\Bla}, P_{\Bmu} \rp = 0$ unless $\Bla = \Bmu$. 
\end{enumerate}
\end{prop}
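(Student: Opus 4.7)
The plan is to mimic Macdonald's Gram-Schmidt characterization of the classical Hall-Littlewood functions ([M, III, (2.6)]), adapted to the two-variable setting. Fix a total order $\preceq$ on $\SP_{n,2}$ refining the dominance order $\le$ of 1.3. Working in $\Xi^n_{\BQ}(t)$, I would construct $P_\Bla$ by induction along $\preceq$: set $P_{\Bla_0} = s_{\Bla_0}$ for the $\preceq$-minimum, and in general set
\begin{equation*}
P_\Bla \,:=\, s_\Bla \,-\, \sum_{\Bmu \prec \Bla} \frac{\lp s_\Bla, P_\Bmu\rp}{\lp P_\Bmu, P_\Bmu\rp}\, P_\Bmu.
\end{equation*}
By induction each $P_\Bmu$ with $\Bmu \prec \Bla$ has the triangular shape in (i), so substituting shows $P_\Bla$ inherits that shape, with coefficients a priori in $\BQ(t)$; orthogonality (ii) is built into the definition. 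Uniqueness is a standard triangularity argument: another solution $P'_\Bla$ would make $P_\Bla - P'_\Bla$ lie in $\operatorname{span}\{s_\Bmu : \Bmu \prec \Bla\} = \operatorname{span}\{P_\Bmu : \Bmu \prec \Bla\}$ and be orthogonal to every $P_\Bmu$ in that span, hence zero by non-degeneracy of the form restricted to that subspace.

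The hardest step is verifying that $\lp P_\Bmu, P_\Bmu\rp \ne 0$ at every stage, so that the Gram-Schmidt formula is well-defined. The scalar product on all of $\Xi^n_{\BQ}(t)$ is non-degenerate: $\{p_\Bla\}$ is a $\BQ(t)$-basis and $\lp p_\Bla, p_\Bla \rp = z_\Bla(t)$ is nonzero in $\BQ(t)$ in view of (1.3.3) and (1.3.4). What is needed beyond this is the non-vanishing of every leading principal minor of the Schur Gram matrix $\bigl(\lp s_\Bmu, s_\Bnu\rp\bigr)_{\Bmu, \Bnu \preceq \Bla}$. I would address this by choosing $\preceq$ compatibly with the filtration by $(|\la'|, |\la''|)$, so that the Gram matrix becomes block-triangular across pairs of single-partition sizes; within each block the problem reduces, through the decomposition $\Xi = \vL(x^{(1)})\otimes\vL(x^{(2)})$, to the classical one-variable case handled in [M, III], where the corresponding minors are known to be nonzero.

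Finally, once $P_\Bla$ has been produced in $\Xi^n_{\BQ}(t)$, a separate integrality bookkeeping is needed to see that the denominators occurring in the inductive formula cancel and give $P_\Bla \in \Xi_{\BQ}[t]$ as asserted; this is typically verified either via an explicit combinatorial expansion (as in the constructive approach of [S1, S2]) or via a raising-operator formula analogous to the classical one. The main obstacle throughout is the compatibility step in the second paragraph: the two-variable scalar product mixes factors of $(1-t^{\la'_j})\iv$ and $(1+t^{\la''_j})\iv$, so extending Macdonald's non-vanishing of principal minors to the product setting requires care, and is where any explicit formula inherited from [S1, S2] would do most of the work.
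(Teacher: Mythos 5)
The paper does not actually reprove this proposition: it is imported from [S1, S2], and Remark~1.5 records precisely the point that your argument glosses over. Your Gram--Schmidt recursion along a total order $\preceq$ refining $\le$ produces functions that are unitriangular with respect to $\preceq$, not with respect to the dominance order of 1.3. Concretely, the recursion subtracts $\frac{\lp s_{\Bla}, P_{\Bmu}\rp}{\lp P_{\Bmu},P_{\Bmu}\rp}P_{\Bmu}$ for \emph{every} $\Bmu\prec\Bla$, including those $\Bmu$ that are incomparable to $\Bla$ for $\le$; since $s_{\Bmu}$ occurs in $P_{\Bmu}$ with coefficient $1$, the output contains $s_{\Bmu}$ for such $\Bmu$ unless $\lp s_{\Bla},P_{\Bmu}\rp=0$. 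That vanishing is essentially equivalent to what is being proved (it is the statement $K_{\Bla,\Bmu}(t)=0$ for $\Bmu\not\le\Bla$) and is not supplied by your induction, so the existence half of your argument is circular at exactly this step. This is why Remark~1.5 appeals to the closed formula of [S2, Thm.~2.8] (or, alternatively, to the geometric result of Achar--Henderson, Theorem~2.4): some such external input is needed to show that the orthogonalization is independent of the chosen total order and triangular for the partial order. Your uniqueness argument also needs a small repair: to conclude that $P_{\Bla}-P'_{\Bla}$ is orthogonal to every $P_{\Bmu}$ with $\Bmu<\Bla$ you must first know $P_{\Bmu}=P'_{\Bmu}$ for those $\Bmu$, i.e.\ the argument should be an induction along the order.

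A second, smaller problem is the non-vanishing of the principal minors of the Schur Gram matrix. The scalar product on $\Xi_{\BQ}(t)$ is defined through the basis $p_{\Bla}$ built from $p_r^{(1)}=p_r(x^{(1)})+p_r(x^{(2)})$ and $p_r^{(2)}=p_r(x^{(1)})-p_r(x^{(2)})$, with weights $z_{\Bla}(t)$ mixing the factors $(1-t^{\la'_j})\iv$ and $(1+t^{\la''_j})\iv$; it is \emph{not} the tensor product of the classical scalar products on $\vL(x^{(1)})$ and $\vL(x^{(2)})$, and it does not respect the bigrading by $(|\la'|,|\la''|)$. Hence the claimed block-triangular reduction to the one-variable case of [M, III] does not go through as stated. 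Non-degeneracy on all of $\Xi^n_{\BQ}(t)$ is indeed clear from $z_{\Bla}(t)\ne 0$, but that alone does not force every leading principal minor of the Schur Gram matrix to be nonzero for an arbitrary compatible total order. Both of these points, together with the polynomiality $P_{\Bla}\in\Xi_{\BQ}[t]$ that you defer, are exactly what [S1, Thm.~5.4] and [S2, Thm.~2.8] are carrying; a self-contained proof must either import those results explicitly or establish the vanishing $\lp s_{\Bla},P_{\Bmu}\rp=0$ for $\Bmu\not\le\Bla$ by some independent means.
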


\remark{1.5.}
$P_{\Bla}$ is called the Hall-Littlewood function associated to a double 
partiton $\Bla$.  More generally, Hall-Littlewood functions associated to 
$r$-partitions of $n$ was introduced in [S1]. However the arguments in [S1] 
is based on a fixed total order which is compatible with the partial order 
$\ge$ on $\SP_{n,2}$ even in the case of double partitions.  
In [S2, Theorem 2.8], the closed formula for $P_{\Bla}$ is given in the 
case of double partitions. This implies that $P_{\Bla}$ is independent of the 
choice of the total order, and is deterimned uniquely as in the above proposition. 
(The uniqueness of $P_{\Bla}$ also follows from the result of Achar-Henderson, 
see Theorem 2.4.)

\para{1.6.}
By Proposition 1.4, $\{ P_{\Bla} \mid \Bla \in \SP_{n,2} \}$ gives a basis 
of $\Xi_{\BQ}(t)$.  
For $\Bla, \Bmu \in \SP_{n,2}$, we define a function $K_{\Bla, \Bmu}(t) \in \BQ(t)$
by the formula

\begin{equation*}
s_{\Bla}(x) = \sum_{\Bmu \in \CP_{n,2}}K_{\Bla, \Bmu}(t)P_{\Bmu}(x;t).
\end{equation*} 
$K_{\Bla, \Bmu}(t)$ are called the Kostka functions associated to double partitions.
For each $\Bla = (\la', \la'') \in \SP_{n,2}$, put 
$n(\Bla) = n(\la' + \la'') = n(\la') + n(\la'')$.  We define an integer $a(\Bla)$  by 
\begin{equation*}
\tag{1.6.1}
a(\Bla) = 2n(\Bla) + |\la''|.
\end{equation*}
The following result was proved in [S2, Prop. 3.3]. 

\begin{prop}  
$K_{\Bla, \Bmu}(t) \in \BZ[t]$.  $K_{\Bla, \Bmu}(t) = 0$ unless 
$\Bla \ge \Bmu$.  If $\Bla \ge \Bmu$, 
$K_{\Bla, \Bmu}(t)$ is a monic of degree $a(\Bmu) - a(\Bla)$, 
hence $K_{\Bla,\Bla}(t) = 1$.  
In particular, $P_{\Bla}(x;t) \in \Xi^n[t]$, and $u_{\Bla,\Bmu}(t) \in \BZ[t]$. 
\end{prop}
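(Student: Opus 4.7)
The plan is to establish the three assertions in order of increasing difficulty: the triangularity together with the normalization $K_{\Bla,\Bla}(t)=1$; then the integrality $K_{\Bla,\Bmu}(t)\in\BZ[t]$; and finally the monic degree statement. The overall strategy is to reduce everything to analogous statements about the transition matrix $u_{\Bla,\Bmu}(t)$ appearing in Proposition 1.4(i), which is well suited to the closed formula for $P_{\Bla}$ mentioned in Remark 1.5 (namely [S2, Theorem 2.8]).

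The triangular part is formal. Proposition 1.4(i) expresses each $P_{\Bla}$ as a unitriangular combination
\[
P_{\Bla}(x;t)=s_{\Bla}(x)+\sum_{\Bmu<\Bla}u_{\Bla,\Bmu}(t)\,s_{\Bmu}(x)
\]
in the basis $\{s_{\Bmu}\}$ with respect to the order $\ge$ on $\SP_{n,2}$. The transition matrix is unitriangular over $\BQ(t)$, hence invertible with unitriangular inverse whose entries are by definition $K_{\Bla,\Bmu}(t)$. This immediately gives $K_{\Bla,\Bmu}(t)=0$ unless $\Bla\ge\Bmu$, and $K_{\Bla,\Bla}(t)=1$.

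For integrality, I would invoke the closed product formula for $P_{\Bla}(x;t)$ from [S2, Theorem 2.8]. That formula realizes $P_{\Bla}$ as a symmetrization of an explicit monomial expression in $x^{(1)},x^{(2)}$ with coefficients in $\BZ[t]$, so $P_{\Bla}(x;t)\in\Xi^n[t]$ and hence $u_{\Bla,\Bmu}(t)\in\BZ[t]$. Since the inverse of a unitriangular matrix with entries in $\BZ[t]$ again has entries in $\BZ[t]$, the integrality $K_{\Bla,\Bmu}(t)\in\BZ[t]$ follows; the last assertion that $P_{\Bla}\in\Xi^n[t]$ is recorded along the way.

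The degree statement is the main obstacle. One must show that $K_{\Bla,\Bmu}(t)$ is monic of degree exactly $a(\Bmu)-a(\Bla)$ when $\Bla\ge\Bmu$ (this difference being nonnegative reflects the antitonicity of $a$ with respect to the dominance order on $\SP_{n,2}$, which itself needs checking as a preliminary). The approach is to extract from the closed formula the sharp estimate $\deg u_{\Bla,\Bmu}(t)\le a(\Bmu)-a(\Bla)$, with equality realized in a controllable leading term. Expanding the unitriangular inverse as
\[
K_{\Bla,\Bmu}(t)=\sum_{k\ge 1}(-1)^k\sum_{\Bla=\Bnu_0>\Bnu_1>\cdots>\Bnu_k=\Bmu}u_{\Bnu_0,\Bnu_1}(t)\cdots u_{\Bnu_{k-1},\Bnu_k}(t)
\]
and using the telescoping $\sum_i\bigl(a(\Bnu_{i-1})-a(\Bnu_i)\bigr)=a(\Bmu)-a(\Bla)$, the degree bound follows automatically; the delicate part is identifying the dominant chain(s) whose leading contributions survive cancellation to produce a monic top term. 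The main technical subtlety is the appearance of $|\la''|$ in $a(\Bla)=2n(\Bla)+|\la''|$: the $2n(\Bla)$ part reflects the usual degree coming from $z^{(1)}_{\Bla}(t)$ and $z^{(2)}_{\Bla}(t)$, while the extra $|\la''|$ is forced by the ``twisted'' $(1+t^{\la''_j})\iv$ factors in $z^{(2)}_{\Bla}(t)$ of (1.3.3), whose expansions shift each part of $\la''$ by one unit in the degree bookkeeping. Verifying this telescoping precisely through the double-partition combinatorics is where the bulk of the work lies.
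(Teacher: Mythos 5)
First, a remark on the comparison itself: the paper offers no proof of this proposition, quoting it from [S2, Prop.\ 3.3], so your argument can only be judged on its own merits. The unitriangularity part is correct and essentially formal: the matrix $(u_{\Bla,\Bmu})$ of Proposition 1.4(i) is unitriangular for the partial order on $\SP_{n,2}$, hence so is its inverse $(K_{\Bla,\Bmu})$, giving the vanishing for $\Bla\not\ge\Bmu$ and $K_{\Bla,\Bla}(t)=1$. Your route to integrality is defensible in outline but reverses the logical order of the statement as written (the ``in particular'' indicates that $P_{\Bla}\in\Xi^n[t]$ and $u_{\Bla,\Bmu}\in\BZ[t]$ are deduced from $K_{\Bla,\Bmu}\in\BZ[t]$, not the other way around); it therefore rests entirely on the claim that the closed formula of [S2, Thm.\ 2.8] visibly has coefficients in $\BZ[t]$. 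That is a nontrivial point in its own right --- the formula is a symmetrized rational expression, and clearing denominators is exactly where the work lies --- so you are importing integrality rather than proving it.

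The genuine gap is the monic-degree assertion, which you correctly identify as the main obstacle but do not prove. Two things are missing. First, the bound $\deg u_{\Bnu,\Bmu}(t)\le a(\Bmu)-a(\Bnu)$ is nowhere established: Proposition 1.4 only gives $u_{\Bnu,\Bmu}\in\BQ(t)$, and extracting this estimate from the closed formula is a substantial computation you defer. Second, and more seriously, even granting that bound, the telescoping in the expansion of the unitriangular inverse yields only the inequality $\deg K_{\Bla,\Bmu}\le a(\Bmu)-a(\Bla)$. Monicity requires showing that the coefficient of $t^{a(\Bmu)-a(\Bla)}$ equals $1$, i.e.\ that the alternating sum over chains of products of leading coefficients collapses to $1$; if, for instance, every $u_{\Bnu,\Bmu}$ attained the maximal degree with leading coefficient $1$, that alternating sum would be a M\"obius-type count of chains and not $1$ in general. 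You explicitly leave this cancellation analysis (``identifying the dominant chains'') as the bulk of the work, so the central claim of the proposition remains unestablished. The route the paper actually points to is different: the matrix equation of Proposition 1.9, in which the diagonal normalization $p_{\Bla,\Bla}=t^{a(\Bla)}$ is imposed and the degree control comes from the fake-degree polynomials $\w_{\Bla,\Bmu}$ of (1.8.3); alternatively, the geometric interpretation (Theorem 2.4) or the factorization (3.2.1) reduce the degree and leading-coefficient statement to the known fact that ordinary $K_{\la\mu}(t)$ is monic of degree $n(\mu)-n(\la)$.
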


\para{1.8.}
Since $K_{\Bla, \Bmu}(t)$ is a polynomial in $t$ associated to double partitions,
we call it the double Kostka polynomial.   
Put $\wt K_{\Bla, \Bmu}(t) = t^{a(\Bmu)}K_{\Bla, \Bmu}(t\iv)$. 
By Proposition 1.7, $\wt K_{\Bla, \Bmu}(t)$ is again contained in $\BZ[t]$, 
which we call the modified double Kostka polynomial.
In the case of Kostka polynomial $K_{\la,\mu}(t)$, we also put 
$\wt K_{\la,\mu}(t) = t^{n(\mu)}K_{\la, \mu}(t\iv)$. By 1.2, $\wt K_{\la, \mu}(t)$
is a polynomial in $\BZ[t]$, which is called the modified Kostka polynomial. 
\par
Following [S1, S2], we give a combinatorial characterization of $\wt K_{\la, \mu}(t)$
and $\wt K_{\Bla, \Bmu}(t)$. 
In order to discuss both cases simultaneously, we introduce some notation.
For $r = 1,2$, put $W_{n,r} = S_n \ltimes (\BZ/2\BZ)^n$. Hence $W_{n,r}$ is 
the symmetric group $S_n$ of degree $n$ if $r = 1$, and is the Weyl group of 
type $C_n$ if $r = 2$. 
For a (not necessarily irreducible) character $\x$ of $W_{n,r}$, we define the fake 
degree $R(\x)$ by
\begin{equation*}
\tag{1.8.1}
R(\x) = \frac{\prod_{i=1}^n(t^{ir}-1)}{|W_{n,r}|}
                     \sum_{w \in W_{n,r}}\frac{\ve(w)\x(w)}{\det_{V_0}(t-w)},
\end{equation*}
where $\ve$ is the sign character of $W_{n,r}$, and $V_0$ is the reflection representation 
of $W_{n,r}$ if $r = 2$ (i.e., $\dim V_0 = n$), and its restriction on $S_n$ if $r = 1$. 
Let $R(W_{n,r}) = \bigoplus_{i = 1}^NR_i$ be the coinvariant algebra over $\BQ$ 
associated to $W_{n,r}$, 
where $N$ is the number of positive roots of the root system of type $C_n$ 
(resp. type $A_{n-1}$) if $r = 2$ (resp. $r = 1$).  Then $R(W_{n,r})$ is a graded 
$W_{n,r}$-module, and we have 
\begin{equation*}
\tag{1.8.2}
R(\x) = \sum_{i=1}^N\lp \x, R_i\rp_{\,W_{n,r}} t^i,
\end{equation*} 
where $\lp \  , \ \rp_{\, W_{n,r}}$ is the inner product of characters of $W_{n,r}$. 
It follows that $R(\x) \in \BZ[t]$. 
It is known that irreducible characters of $W_{n,r}$ are parametrized by 
$\SP_{n,r}$ (we use the convention that $\SP_{n,1} = \SP_n$). 
We denote by $\x^{\Bla}$ the irreducible character of $W_{n,r}$ 
corresponding to $\Bla \in \SP_{n,r}$. 
(Here we use the parametrization such that the identity character corresponds to 
$\Bla = ((n), -)$ if $r = 2$, and  $\la = (n)$ if $r = 1$.)  
We define a square matrix $\Om = (\w_{\Bla, \Bmu})_{\Bla, \Bmu}$ by 
\begin{equation*}
\tag{1.8.3}
\w_{\Bla, \Bmu} = t^NR(\x^{\Bla} \otimes \x^{\Bmu} \otimes \ve).
\end{equation*}
We have the following result.
Note that Theorem 5.4 in [S1] is stated for a fixed total order on $\CP_{n,2}$. 
But in our case, it can be replaced by the partial order (see Remark 1.5). 

\begin{prop}[{[S1, Thm. 5.4]}]
Assume that $r = 2$.  There exist unique matrices 
$P = (p_{\Bla, \Bmu})$, $\vL = (\xi_{\Bla, \Bmu})$ over $\BQ[t]$ satisfying 
the equation 
\begin{equation*}
P\vL \,{}^t\!P = \Om,
\end{equation*}
subject to the condition that $\vL$ is a diagonal matrix and that 
\begin{equation*}
p_{\Bla, \Bmu} = \begin{cases}
                      0  &\quad\text{ unless $\Bmu \le \Bla$  }, \\ 
                     t^{a(\Bla)}  &\quad\text{ if $\Bla = \Bmu$ }.
                 \end{cases}
\end{equation*}
Then the entry $p_{\Bla, \Bmu}$ of the matrix $P$ coincides with 
$\wt K_{\Bla, \Bmu}(t)$. 
\par
A simialr result holds for the case $r = 1$ by replacing 
$\Bla, \Bmu \in \SP_{n,2}$
by $\la, \mu \in \SP_n$, and by replacing $a(\Bla)$ by $n(\la)$.
\end{prop}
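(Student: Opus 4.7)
The plan is to interpret $\Om$ as a suitable rescaling of the Gram matrix of Schur functions $\{s_{\Bla}(x)\}$ in the scalar product $\lp\ ,\ \rp$ on $\Xi_{\BQ}(t)$ defined in 1.3. Once that identification is in hand, the equation $P\vL\,{}^t\!P = \Om$ is essentially an LDL${}^{T}$ factorization: since $\{P_{\Bla}\}$ from Proposition 1.4 is orthogonal and unitriangularly related to $\{s_{\Bla}\}$, the Gram matrix of $\{s_{\Bla}\}$ factors as $K(t)\,\operatorname{diag}(\lp P_{\Bmu},P_{\Bmu}\rp)\,K(t)^{T}$ with $K(t) = (K_{\Bla,\Bmu}(t))$. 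Under the rescaling that matches $\Om$ with the Gram matrix, this factorization becomes $P\vL\,{}^t\!P$ with $p_{\Bla,\Bmu} = \wt K_{\Bla,\Bmu}(t)$, while uniqueness of $P,\vL$ subject to the stated triangularity and diagonal constraints reduces to the standard Cholesky uniqueness via forward substitution.

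The key computational step is to compute $\lp s_{\Bla}, s_{\Bmu}\rp$ via characters of $W_{n,2}$ and identify the result with a renormalization of $\w_{\Bla,\Bmu}$. Conjugacy classes of $W_{n,2}$ are indexed by signed cycle types $\Bnu = (\nu',\nu'')\in\SP_{n,2}$ with centralizer order $z_{\Bnu}$ as in 1.3, and the Frobenius-type formula yields $s_{\Bla} = \sum_{\Bnu}z_{\Bnu}\iv\x^{\Bla}(\Bnu) p_{\Bnu}$. Combined with $\lp p_{\Bnu},p_{\Bnu'}\rp = \d_{\Bnu,\Bnu'}z_{\Bnu}(t)$, this gives
\begin{equation*}
\lp s_{\Bla}, s_{\Bmu}\rp \;=\; \sum_{\Bnu} z_{\Bnu}\iv z_{\Bnu}^{(1)}(t)z_{\Bnu}^{(2)}(t)\,\x^{\Bla}(\Bnu)\x^{\Bmu}(\Bnu).
\end{equation*}
For $w\in W_{n,2}$ of signed cycle type $\Bnu$, ${\det}_{V_0}(t-w) = \prod_i(t^{\nu'_i}-1)\prod_j(t^{\nu''_j}+1)$, which matches (up to signs) the denominator of $z_{\Bnu}^{(1)}(t)z_{\Bnu}^{(2)}(t)$; and because $\ve(w)(\x^{\Bla}\otimes\x^{\Bmu}\otimes\ve)(w) = \x^{\Bla}(w)\x^{\Bmu}(w)$, the $\ve$-twist in (1.8.3) cancels. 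Grouping (1.8.1) by conjugacy class and bookkeeping the prefactor $\prod_i(t^{2i}-1)/|W_{n,2}|$ together with the $t^{N}$ of (1.8.3) then identifies $\w_{\Bla,\Bmu}(t)$ with the corresponding Gram-matrix entry up to a power of $t$ determined by $a(\Bla)+a(\Bmu)$, the shift being forced on the diagonal by $p_{\Bla,\Bla} = t^{a(\Bla)}$ and on off-diagonals by the degree formula $\deg \wt K_{\Bla,\Bmu}(t) = a(\Bla)$ implicit in Proposition 1.7.

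The main obstacle is precisely this character-theoretic bookkeeping: one must carefully track signed cycle types on $W_{n,2}$, the signs arising from converting $(1-t^{\nu'_i})\iv$ to $(t^{\nu'_i}-1)\iv$, the cancellation of the $\ve$-twist, and the exact power of $t$ absorbed into $a(\Bla) = 2n(\Bla)+|\la''|$ under the $t\leftrightarrow t\iv$ substitution that relates $K_{\Bla,\Bmu}$ and $\wt K_{\Bla,\Bmu}$. Once that identification is cleanly in place, existence of $P,\vL$ is supplied by the Gram--Schmidt construction of $\{P_{\Bla}\}$ in Proposition 1.4, integrality of the entries follows from Proposition 1.7, and uniqueness is routine. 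The case $r=1$ reduces to the classical Macdonald computation on $S_n$.
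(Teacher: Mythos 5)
The paper itself does not prove this proposition: it is quoted verbatim from [S1, Thm.~5.4] (with only the remark that the total order there can be replaced by the partial order). So there is no internal proof to compare against; what you have written is, in outline, exactly the argument of [S1]: identify $\Om$ with a renormalized Gram matrix of the $s_{\Bla}$, factor that Gram matrix through the orthogonal basis $\{P_{\Bnu}\}$ using the unitriangular transition matrix $(K_{\Bla,\Bnu}(t))$, and invoke uniqueness of the triangular--diagonal--triangular factorization with prescribed diagonal. All the ingredients you name (Frobenius formula for $W_n$, $\det_{V_0}(t-w)=\prod_i(t^{\nu'_i}-1)\prod_j(t^{\nu''_j}+1)$, cancellation of the $\ve$-twist since $\ve(w)^2=1$) are correct.

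The one point you defer to ``bookkeeping'' is in fact the crux, and it is worth recording how it closes, because as you have set it up there is an apparent sign obstruction. Comparing $z^{(1)}_{\Bnu}(t)z^{(2)}_{\Bnu}(t)=\prod_i(1-t^{\nu'_i})\iv\prod_j(1+t^{\nu''_j})\iv$ with $\det_{V_0}(t-w)\iv$ leaves a residual factor $(-1)^{\ell(\nu')}$ depending on the class $\Bnu$, so $\Om(t)$ is \emph{not} a rescaling of $\bigl(\lp s_{\Bla},s_{\Bmu}\rp\bigr)$ evaluated at $t$. The resolution is that it \emph{is} a rescaling of the Gram matrix evaluated at $t\iv$: since $1-t^{-\ell}=t^{-\ell}(t^{\ell}-1)$ and $1+t^{-\ell}=t^{-\ell}(t^{\ell}+1)$, one gets $z^{(1)}_{\Bnu}(t\iv)z^{(2)}_{\Bnu}(t\iv)=t^{n}\det_{V_0}(t-w)\iv$ with no sign, whence
\begin{equation*}
\w_{\Bla,\Bmu}(t)\;=\;t^{N-n}\prod_{i=1}^{n}(t^{2i}-1)\cdot\lp s_{\Bla},s_{\Bmu}\rp\big|_{t\mapsto t\iv}
\;=\;\sum_{\Bnu}\wt K_{\Bla,\Bnu}(t)\,\xi_{\Bnu,\Bnu}(t)\,\wt K_{\Bmu,\Bnu}(t),
\end{equation*}
the powers $t^{a(\Bnu)}$ needed to convert $K_{\Bla,\Bnu}(t\iv)$ into $\wt K_{\Bla,\Bnu}(t)$ being absorbed into the diagonal factor. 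So the substitution $t\mapsto t\iv$ is not an extra normalization to be chosen at the end; it is forced by the sign analysis, and it is precisely what makes the triangular factor come out as $\wt K$ rather than $K$. With that identity in place, your existence argument is Proposition 1.4 plus Proposition 1.7 (which also upgrades the vanishing from the chosen total order to the partial order $\le$), and your uniqueness argument (two factorizations differ by a unitriangular $U$ with $U\vL_1{}^t\!U=\vL_2$, forcing $U=I$) is correct. I would only ask you to carry out the displayed computation explicitly rather than flagging it as an obstacle; everything else is routine.
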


\para{1.10.}
Assume that $\Bla = (-,\la'') \in \SP_{n,2}$. If $\Bmu \le \Bla$, 
then $\Bmu$ is of the form $\Bmu = (-, \mu'')$ with $\mu'' \le \la''$.  
Thus $\wt K_{\Bla, \Bmu}(t) = 0$ unless $\Bmu$ satisfies this condition. 
The following result was showm by Achar-Henderson [AH] by a geometric 
method (see Proposition 2.5 (ii)).  
We give below an alternate proof based on Proposition 1.9.

\begin{prop}  
Assume that $\Bla = (-, \la''), \Bmu = (-, \mu'') \in \SP_{n,2}$.
Then  
\begin{equation*}
\tag{1.11.1}
\wt K_{\Bla, \Bmu}(t) = t^n\wt K_{\la'', \mu''}(t^2).
\end{equation*}  
In particular, we have
\begin{equation*}
\tag{1.11.2}
K_{\Bla, \Bmu}(t) = K_{\la'',\mu''}(t^2).
\end{equation*}
\end{prop}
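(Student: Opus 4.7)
The plan is to apply Proposition 1.9 and reduce the statement to a direct identity between the submatrix $\Om^\circ$ of $\Om$ indexed by ``pure'' double partitions $(-,\la'')$ and the analogous matrix for $S_n$ with $t$ replaced by $t^2$. First I would make the characters explicit: in the standard parametrization, for $\Bla=(-,\la'')$ the character of $W_{n,2}=S_n\ltimes(\BZ/2\BZ)^n$ is $\x^{\Bla}(s,\epsilon)=\s(\epsilon)\,\x^{\la''}(s)$ with $\s(\epsilon):=\prod_i\epsilon_i$, and the sign character is $\ve(s,\epsilon)=\ve_{S_n}(s)\,\s(\epsilon)$. Hence for $\Bla=(-,\la''),\Bmu=(-,\mu'')$, the product $\ve(w)\,\x^{\Bla}(w)\,\x^{\Bmu}(w)$ appearing in (1.8.1) collapses to $\x^{\la''}(s)\x^{\mu''}(s)$, independent of~$\epsilon$.

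Next I carry out the inner sum over $\epsilon\in\{\pm1\}^n$. For $s\in S_n$ with cycle lengths $l_1,\dots,l_r$, the characteristic polynomial of $(s,\epsilon)$ on the reflection representation factors as $\prod_c(t^{l_c}-\s_c)$, where $\s_c$ is the product of signs around the cycle $c$. Grouping the sign choices in each cycle by the value of $\s_c$ and using $\tfrac{1}{t^l-1}+\tfrac{1}{t^l+1}=\tfrac{2t^l}{t^{2l}-1}$, a cycle-by-cycle computation gives
\begin{equation*}
\sum_{\epsilon\in\{\pm1\}^n}\frac{1}{\det_{V_0}\bigl(t-(s,\epsilon)\bigr)}=\frac{2^n t^n}{\det_{V_0}(t^2-s)}.
\end{equation*}
Substituting this into (1.8.1), comparing with the $r=1$ formula for $S_n$ after $t\mapsto t^2$, and collecting the prefactors $t^N$ in (1.8.3) ($N=n^2$ for $C_n$ versus $N=n(n-1)/2$ for $S_n$), I obtain $\w_{\Bla,\Bmu}=t^{2n}\,\w^{(1)}_{\la'',\mu''}(t^2)$, where $\w^{(1)}$ denotes the $r=1$ version for $S_n$.

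To transfer this identity from $\Om$ to $P$, I observe that $\Bmu\le(-,\la'')$ forces $\mu'=-$ (from the first partial sum in the defining inequality for $c(\Bmu)\le c((-,\la''))$), so the pure double partitions form an order ideal. Ordering them first, $P$ becomes block lower triangular and the equation $P\vL\,{}^tP=\Om$ restricts on the pure-pure block to $P^\circ\vL^\circ\,{}^tP^\circ=\Om^\circ$ with $P^\circ_{\Bla,\Bla}=t^{a(\Bla)}$. The $r=1$ case of Proposition 1.9 gives $\Om^{(1)}=P^{(1)}\vL^{(1)}\,{}^tP^{(1)}$ with $P^{(1)}_{\la,\la}=t^{n(\la)}$; substituting $t\mapsto t^2$ and combining with $\Om^\circ=t^{2n}\Om^{(1)}(t^2)$ produces the alternative factorization $\Om^\circ=\bigl(t^nP^{(1)}(t^2)\bigr)\,\vL^{(1)}(t^2)\,\bigl(t^n\,{}^tP^{(1)}(t^2)\bigr)$, whose ``$P$''-factor is lower triangular with diagonal $t^n\cdot t^{2n(\la'')}=t^{a(\Bla)}$ (using $a(\Bla)=2n(\la'')+n$ when $\la'=-$). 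By the standard uniqueness of $LDL^{\top}$ factorizations with a prescribed invertible diagonal, the two decompositions coincide, yielding (1.11.1); (1.11.2) follows by unfolding $\wt K_{\Bla,\Bmu}(t)=t^{a(\Bmu)}K_{\Bla,\Bmu}(t\iv)$ and $\wt K_{\la'',\mu''}(t^2)=t^{2n(\mu'')}K_{\la'',\mu''}(t^{-2})$ and using $a(\Bmu)=2n(\mu'')+n$.

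I expect the main obstacle to be the $t$-power bookkeeping: threading the powers through (1.8.1), (1.8.3), the cycle sum, and the diagonal normalization $t^{a(\Bla)}$. The character-theoretic and cycle computations themselves are routine once the explicit forms of $\x^{(-,\la'')}$ and $\ve$ on $W_{n,2}$ are in hand, and the order-ideal observation is an immediate partial-sum check.
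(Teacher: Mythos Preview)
Your overall strategy matches the paper's: establish the $\Om$-identity $\w_{\Bla,\Bmu}(t)=t^{2n}\w^{(1)}_{\la'',\mu''}(t^2)$, observe that $\{(-,\la'')\}$ is an order ideal so $P\vL\,{}^tP=\Om$ restricts to this block, and conclude by the uniqueness in Proposition~1.9. The difference is only in how the $\Om$-identity is obtained: the paper argues via the graded-module description (1.8.2), identifying the relevant isotypic piece of the coinvariant algebra $R(W_n)$ as $y_1\cdots y_n\,R(W_n)^{(\BZ/2\BZ)^n}\simeq y_1\cdots y_n\,R(S_n)(t\mapsto t^2)$, which immediately yields the factor $t^n$ and the substitution $t\mapsto t^2$. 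Your route via the explicit rational-function formula (1.8.1) and a cycle-by-cycle sum over $\epsilon$ is a perfectly good alternative and arguably more elementary; both land on the same identity (1.11.3).

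One slip to fix: the numerator in (1.8.1) for $R(\x^{\Bla}\otimes\x^{\Bmu}\otimes\ve)$ is $\ve(w)\cdot\x^{\Bla}(w)\x^{\Bmu}(w)\ve(w)=\x^{\Bla}(w)\x^{\Bmu}(w)$, not $\ve(w)\x^{\Bla}(w)\x^{\Bmu}(w)$. With only one factor of $\ve$ your product would equal $\ve_{S_n}(s)\,\s(\epsilon)\,\x^{\la''}(s)\x^{\mu''}(s)$, which is \emph{not} independent of $\epsilon$. With the correct numerator $\x^{\Bla}(w)\x^{\Bmu}(w)=\x^{\la''}(s)\x^{\mu''}(s)$ your cycle sum $\sum_\epsilon \det_{V_0}(t-(s,\epsilon))^{-1}=2^n t^n/\det_{V_0}(t^2-s)$ is right and the rest of the bookkeeping goes through unchanged.
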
 

\begin{proof}
(1.11.2) follows from (1.11.1).  We show (1.11.1).
We shall compute $\w_{\Bla, \Bmu}= t^NR(\x^{\Bla}\otimes \x^{\Bmu}\otimes \ve)$ 
for $\Bla = (-,\la''), \Bmu = (-.\mu'')$.
$\x^{\Bla}$ corresponds to the irreducible representation of $S_n$ with 
character $\x^{\la''}$, extended 
by the action of $(\BZ/2\BZ)^n$ such that any factor $\BZ/2\BZ$ acts non-trivially. 
This is the same for $\x^{\Bmu}$.  Hence $\x^{\Bla}\otimes \x^{\Bmu}$ corresponds to 
the representation of $S_n$ with character $\x^{\la''}\otimes \x^{\mu''}$, extended 
by the trivial action of $(\BZ/2\BZ)^n$.  
Thus $\x^{\Bla}\otimes \x^{\Bmu}\otimes \ve$ corresponds to the representation of $S_n$ 
with character $\x^{\la''}\otimes \x^{\mu''}\otimes \ve'$, extended by the action of 
$(\BZ/2\BZ)^n$ such that any factor $\BZ/2\BZ$ acts non-trivially, where 
$\ve'$ denote the sign character of $S_n$.  
Let $\{ s_1, \dots, s_n\}$ be the set of simple reflections of $W_n$.  
We identify the symmetric algebra $S(V_0^*)$ of $V_0$ with the polynomial ring 
$\BR[y_1, \dots, y_n]$ with the natural $W_n$ action, 
where $s_i$ permutes $y_i$ and $y_{i+1}$ ($1 \le i \le n-1$), and $s_n$ maps 
$y_n$ to $-y_n$.  Then $(\BZ/2\BZ)^n$-invariant subalgebra of $\BR[y_1, \dots, y_n]$
coincides with $\BR[y_1^2, \dots, y_n^2]$.  It follows that 
the $(\BZ/2\BZ)^n$-invariant subalgerba $R(W_n)^{(\BZ/2\BZ)^n}$ of 
$R(W_n)$ is isomorphic to $R(S_n)$ as graded algebras, where the degree $2i$-part of 
$R(W_n)^{(\BZ/2\BZ)^n}$ corresponds to the degree $i$ part of $R(S_n)$.     
Let $X$ be the subspace of $R(W_n)$ consisitng of vectors on which $(\BZ/2\BZ)^n$ 
acts in such a way that each factor $\BZ/2\BZ$ acts non-trivially.  Then 
$X = y_1\cdots y_n R(W_n)^{(\BZ/2\BZ)^n}$. 
It follows that 
\begin{equation*}
R(\x^{\Bla}\otimes\x^{\Bmu}\otimes\ve)(t) = 
        t^nR(\x^{\la''}\otimes\x^{\mu''}\otimes\ve')(t^2). 
\end{equation*}
Since $N = n^2$ for $W_n$-case, and $N = n(n-1)/2$ for $S_n$-case, this implies that 
\begin{equation*}
\tag{1.11.3}
\w_{\Bla, \Bmu}(t) = t^{2n}\w_{\la'', \mu''}(t^2)
\end{equation*}
We consider the embedding $\SP_n \hra \SP_{n,2}$ by $\la'' \mapsto (-, \la'')$. 
This embedding is compatible with the partial order of $\SP_n$ and $\SP_{n,2}$, 
and in fact, $\CP_n$ is identifed with the subset 
$\{ \Bmu \in \CP_{n,2} \mid \Bmu \le (-, (n))\}$ of $\SP_{n,2}$.  
We consider the matrix equation $P\vL\, {}^t\!P = \Om$ as in Proposition 1.9 for $r = 2$.  
Let $P_0, \vL_0, \Om_0$ be the submatrices of $P, \vL, \Om$ obtained  
by restricting the indices from $\SP_{n,2}$ to $\SP_n$. 
Then these matrices satisfy the relation $P_0\vL_0\,{}^t\!P_0 = \Om_0$.
By (1.11.3) $\Om_0$ coincides with $t^{2n}\Om'(t^2)$, where $\Om'$ denotes 
the matrix $\Om$ in the case $r = 1$.  
If we put $P' = t^{-n}P_0, \vL' = \vL_0$, we have a matrix equation 
$P'\vL'\,{}^t\!P' = \Om'(t^2)$. Note that the $(\la'',\la'')$-entry of $P'$ 
coincides with $t^{-n}t^{a(\Bla)} = t^{2n(\la'')}$.  Hence $P', \vL', \Om'$ 
satisfy all the requirements in Proposition 1.9 for the case $r= 1$, by 
replacing $t$ by $t^2$.   
Now by Proposition 1.9, we have $t^{-n}\wt K_{\Bla, \Bmu}(t) = \wt K_{\la'', \mu''}(t^2)$
as asserted. 
\end{proof}

As a corollary, we have 

\begin{cor}  
Assume that $\Bla = (-, \la'')$.  Then 
$P_{\Bla}(x;t) = P_{\la''}(x^{(2)};t^2)$. 
\end{cor}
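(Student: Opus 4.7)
The plan is to derive the corollary directly from Proposition 1.11 by comparing the Schur expansions of $P_{(-,\mu'')}(x;t)$ and $P_{\mu''}(x^{(2)};t^2)$ obtained from the two defining identities, and then invoking the uniqueness of an inverse unitriangular matrix.

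First I would verify that for $\Bmu = (-,\mu'') \in \SP_{n,2}$, any $\Bla \le \Bmu$ is necessarily of the form $\Bla = (-,\la'')$ with $\la'' \le \mu''$ in the dominance order on $\SP_n$. Indeed, $c(\Bmu) = (0, \mu''_1, 0, \mu''_2, \dots)$, so the first dominance inequality forces $\la'_1 \le 0$ and hence $\la' = -$; the remaining inequalities on partial sums then reduce exactly to $\la'' \le \mu''$. With this in hand, the defining expansion $s_{\Bla}(x) = \sum_{\Bmu \le \Bla} K_{\Bla,\Bmu}(t) P_{\Bmu}(x;t)$ at $\Bla = (-,\la'')$ becomes
\begin{equation*}
s_{\la''}(x^{(2)}) = \sum_{\mu'' \le \la''} K_{(-,\la''),(-,\mu'')}(t)\, P_{(-,\mu'')}(x;t),
\end{equation*}
and Proposition 1.11 identifies the coefficients with $K_{\la'',\mu''}(t^2)$. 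In parallel, the classical identity (1.2.2), applied with $t$ replaced by $t^2$ and $y = x^{(2)}$, yields
\begin{equation*}
s_{\la''}(x^{(2)}) = \sum_{\mu'' \le \la''} K_{\la'',\mu''}(t^2)\, P_{\mu''}(x^{(2)};t^2).
\end{equation*}

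Subtracting these two expansions (now indexed over the same set $\{\mu'' \le \la''\}$) gives, for every $\la'' \in \SP_n$, a vanishing linear combination of the differences $P_{(-,\mu'')}(x;t) - P_{\mu''}(x^{(2)};t^2)$ with coefficient matrix $\bigl(K_{\la'',\mu''}(t^2)\bigr)_{\la'',\mu''}$, which is unitriangular in the dominance order and hence invertible over $\BZ[t]$. Applying the inverse matrix yields $P_{(-,\mu'')}(x;t) = P_{\mu''}(x^{(2)};t^2)$ for every $\mu''$, as desired. The only non-routine step is the partial-order analysis that cuts down the sum on the left-hand side to indices $\Bmu = (-,\mu'')$; once that is in place, the argument is a mechanical inversion of a classical unitriangular Kostka matrix combined with the substitution $t \mapsto t^2$ supplied by Proposition 1.11.
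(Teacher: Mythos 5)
Your argument is correct and is essentially the paper's own proof: the paper likewise expands $s_{\la''}(x^{(2)})$ once via (1.11.2) in terms of the $P_{(-,\mu'')}(x;t)$ and once classically in terms of the $P_{\mu''}(x^{(2)};t^2)$, then invokes the invertibility of the unitriangular matrix $(K_{\la'',\mu''}(t^2))$. The partial-order reduction you spell out (that $\Bmu \le (-,\la'')$ forces $\Bmu = (-,\mu'')$ with $\mu'' \le \la''$) is exactly the observation recorded in 1.10 of the paper, so nothing is missing.
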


\begin{proof}
Since $\Bla = (-,\la'')$, we have $s_{\Bla}(x) = s_{\la''}(x^{(2)})$. 
By (1.11.2), we have
\begin{equation*}
s_{\la''}(x^{(2)}) = \sum_{\mu'' \in \CP_n}K_{\la'',\mu''}(t^2)P_{\Bmu}(x; t).
\end{equation*}
We have  also
\begin{equation*}
s_{\la''}(x^{(2)}) = \sum_{\mu'' \in \CP_n}K_{\la'', \mu''}(t^2)P_{\mu''}(x^{(2)}; t^2).
\end{equation*}
Since 
$(K_{\la'', \mu''}(t^2))$ is a non-singular matrix indexed by $\CP_n$, 
the assertion follows.  
\end{proof}

\par\bigskip

\section{Geometric interpretation of double Kostka polynomials}

\para{2.1.}
In [L1], Lusztig gave a geometric interpretation of Kostka polynomials 
in terms of the intersection cohomology complex associated to the 
nilpotent orbits of $\Fg\Fl_n$.  
Let $V$ be an $n$-dimensional vector space over an algebraically closed 
field $k$, and put $G = GL(V)$.
Let  
$\Fg$ be the Lie algebra of $G$, and $\Fg\nil$ the nilpotent cone of $\Fg$.
$G$ acts on $\Fg\nil$ by the adjoint action, 
and the set of $G$-orbits in $\Fg\nil$ is in bijective correspondence 
with $\SP_n$ via the Jordan normal form of nilpotent elements. 
We denote by $\SO_{\la}$ 
the $G$-orbit corresponding to $\la \in \CP_n$. Let $\ol\SO_{\la}$ 
be the closure of $\SO_{\la}$ in $\Fg\nil$.   
Then we have $\ol\SO_{\la} = \coprod_{\mu \le \la}\SO_{\mu}$, where 
$\mu \le \la$ is the dominance order of $\SP_n$. Let 
$A_{\la} = \IC(\ol\SO_{\la}, \Ql)$ be the 
intersection cohomology complex of $\Ql$-sheaves, and $\SH^i_xA_{\la}$  
the stalk at $x \in \ol\SO_{\la}$ of the $i$-th cohomology sheaf $\SH^iA_{\la}$.
Lusztig's result is stated as follows.

\begin{thm}[{[L1, Thm. 2]}]   
$\SH^iA_{\la} = 0$ for odd $i$.  For each $x \in \SO_{\mu} \subset \ol\SO_{\la}$, 
\begin{equation*}
\wt K_{\la, \mu}(t) = t^{n(\la)}\sum_{i \ge 0}(\dim \SH^{2i}_xA_{\la})t^i.
\end{equation*}
\end{thm}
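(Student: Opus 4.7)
The plan is to follow Lusztig's original strategy in [L1], which proceeds by constructing a resolution of $\ol\SO_{\la}$, analyzing the stalks of its pushforward via parity vanishing, and matching the result against the characterization of $\wt K_{\la,\mu}(t)$ given in Proposition 1.9 for $r = 1$. Concretely, for $\la = (\la_1, \ldots, \la_k) \in \SP_n$ (with the transpose convention adjusted as necessary), let $P_{\la} \subset G$ be a parabolic with Levi factor $GL_{\la_1} \times \cdots \times GL_{\la_k}$ and $\Fn_{\la}$ the Lie algebra of its unipotent radical. Form the smooth variety $\wt Y_{\la} = G \times^{P_{\la}} \Fn_{\la}$ and the proper collapsing map $\pi_{\la} : \wt Y_{\la} \to \Fg\nil$, $(g,y) \mapsto \Ad(g)y$, whose image is $\ol\SO_{\la}$ and which is birational over $\SO_{\la}$.

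The first step is parity vanishing. By a theorem of Spaltenstein, each fiber $\pi_{\la}\iv(x)$ for $x \in \SO_{\mu} \subset \ol\SO_{\la}$ is paved by affine cells, so its cohomology is concentrated in even degrees. Combined with properness this gives $R^i(\pi_{\la})_* \Ql = 0$ for odd $i$, and the decomposition theorem yields a direct sum decomposition of $(\pi_{\la})_* \Ql[\dim \wt Y_{\la}]$ into shifted simple perverse sheaves supported on orbit closures $\ol\SO_{\nu}$ with $\nu \le \la$. Since $\pi_{\la}$ is birational over the open stratum, the summand with full support is exactly $A_{\la}$ up to shift, and the remaining summands have strictly smaller support, so in particular $\SH^i A_{\la} = 0$ for odd $i$.

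The second step is to identify the local Poincar\'e polynomials. Set $p_{\la,\mu}(t) = t^{n(\la)} \sum_i (\dim \SH^{2i}_x A_{\la}) t^i$. I would verify that the matrix $P = (p_{\la,\mu})$ satisfies the three conditions of Proposition 1.9 for $r = 1$: the support condition $p_{\la,\mu} = 0$ unless $\mu \le \la$ is immediate from $\supp A_{\la} = \ol\SO_{\la}$; the diagonal entry $p_{\la,\la} = t^{n(\la)}$ follows because $A_{\la}$ restricts to a shift of the constant sheaf on $\SO_{\la}$; and the bilinear identity $P \vL \,{}^t\!P = \Om$ is obtained by computing the Poincar\'e polynomial of a Springer fiber in two different ways, once via the decomposition theorem (so the stalks $\SH^i_x A_{\la}$ appear quadratically, weighted by the multiplicities $\xi_{\la}$) and once via the Springer $W$-action together with the Hotta-Springer fake degree formula (producing the right-hand side $\w_{\la,\mu} = t^N R(\x^{\la} \otimes \x^{\mu} \otimes \ve)$ in terms of the coinvariant algebra of $S_n$).

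The main obstacle will be the last point: extracting the single IC summand $A_{\la}$, rather than simply reading off the bulk decomposition of $(\pi_{\la})_* \Ql$, requires either an induction on the dominance order to strip away contributions from the smaller strata $\ol\SO_{\nu}$ with $\nu < \la$, or an appeal to the Springer correspondence to isolate the $\x^{\la}$-isotypic component of $H^*(\mu^{-1}(x))$ and identify it with the fake degree $R(\x^{\la} \otimes \x^{\mu} \otimes \ve)$. Either route ultimately reduces to verifying the uniqueness clause of Proposition 1.9, so that the geometric data on the left and the algebraic data on the right are pinned down by the same characterization, yielding $p_{\la,\mu}(t) = \wt K_{\la,\mu}(t)$.
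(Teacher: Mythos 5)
The paper does not prove this statement at all: Theorem 2.2 is quoted verbatim from [L1, Thm.\ 2] as known input, so there is no internal proof to compare against. Your outline is a fair reconstruction of Lusztig's own strategy, and in particular your second step --- characterizing the matrix of local Poincar\'e polynomials by the three conditions of Proposition 1.9 for $r=1$ (triangularity of supports, diagonal normalization $t^{n(\la)}$, and the identity $P\vL\,{}^t\!P=\Om$) --- is exactly how [L1] pins down the answer. The first half of your argument is essentially complete modulo one point you state too quickly: odd vanishing of $\SH^iA_{\la}$ does not follow merely from the smaller summands having ``strictly smaller support''; you need the induction on the dominance order you mention at the end, together with the observation that every shift $\dim\SO_{\nu}=n^2-n-2n(\nu)$ is even, so that parity is preserved when you strip off the summands $A_{\nu}[\dim\SO_{\nu}]$ with $\nu<\la$ from $(\pi_{\la})_*\Ql$.

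The genuine gap is the orthogonality identity $P\vL\,{}^t\!P=\Om$, which is the entire analytic content of [L1] and which your sketch only names. ``Computing the Poincar\'e polynomial of a Springer fiber in two different ways'' is not yet an argument: the two ingredients you actually need are (a) purity, i.e.\ that Frobenius acts on $\SH^{2i}_xA_{\la}$ with eigenvalue $q^i$, so that point counts over $\FF_q$ determine the stalk Poincar\'e polynomials (this is where the affine pavings, or Gabber's purity theorem, really enter), and (b) the identification of the resulting functions $x\mapsto\sum_i\dim(\SH^{2i}_xA_{\la})q^i$ with Green functions of $GL_n(\FF_q)$, whose orthogonality relations (Green's character-theoretic computation, or equivalently the Hotta--Springer evaluation of $H^*_c(\Fg\nil, A_{\la}\otimes A_{\mu})$ in terms of the fake degrees $R(\x^{\la}\otimes\x^{\mu}\otimes\ve)$ appearing in (1.8.3)) produce the right-hand side $\Om$ with a \emph{diagonal} $\vL$. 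The diagonality of $\vL$ is itself part of the uniqueness clause of Proposition 1.9 and must be verified, not assumed. Until (a) and (b) are supplied, the proposal is a correct road map rather than a proof.
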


\para{2.3.}
The geometric interpretation of double Kostka polynomials analogous to 
Theorem 2.2 was established by Achar-Henderson [AH]. 
We follow the setting in 2.1.  Consider the direct product $\SX = \Fg \times V$, 
on which $G$ acts as $g: (x, v) \mapsto (gx, gv)$, where $gv$ is the natural action 
of $G$ on $V$.  Put $\SX\nil = \Fg\nil \times V$.  $\SX\nil$ is a $G$-stable subset of $\SX$, 
and is called the enhanced nilpotent cone. 
It is known by Achar-Henderson [AH] and by Travkin [T] that 
the set of $G$-orbits in $\SX\nil$ is in bijective correspondence with $\SP_{n,2}$. 
The correspondence is given as follows;  take $(x,v) \in \SX\nil$.  
Put $E^x = \{ g \in \End(V) \mid gx = xg \}$.  Then $W = E^xv$ is an $x$-stable 
subspace of $V$.  Let $\la'$ be the Jordan type of $x|_W$, and 
$\la''$ the Jordan type of $x|_{V/W}$.  Then $\Bla = (\la', \la'') \in \SP_{n,2}$, 
and the assignment $(x,v) \mapsto \Bla$ gives the required correspondence. 
We denote by $\SO_{\Bla}$ the $G$-orbit corresponding to $\Bla \in \SP_{n,2}$. 
The closure relation for $\SO_{\Bla}$ was described by [AH, Thm. 3.9] as follows;

\begin{equation*}
\tag{2.3.1}
\ol\SO_{\Bla} = \coprod_{\Bmu \le \Bla}\SO_{\Bmu},
\end{equation*}
where the partial order $\Bmu \le \Bla$ is the one defined in 1.3.
We consider the intersection cohomology complex $A_{\Bla} = \IC(\ol\SO_{\Bla}, \Ql)$ 
on $\SX\nil$ associated to $\Bla \in \SP_{n,2}$. 
The following result was proved by Achar-Henderson. 

\begin{thm}[{[AH, Thm. 5.2]}]   
Assume that $A_{\Bla}$ is attached to the enhanced nilpotent cone.  Then 
$\SH^iA_{\Bla} = 0$ for odd $i$.  For $z \in \SO_{\Bmu} \subset \ol\SO_{\Bla}$, 
\begin{equation*}
\wt K_{\Bla, \Bmu}(t) = t^{a(\Bla)}\sum_{i \ge 0}(\dim \SH^{2i}_zA_{\Bla})t^{2i}.
\end{equation*}
\end{thm}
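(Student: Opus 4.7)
The plan is to verify that the polynomials
\begin{equation*}
p_{\Bla,\Bmu}(t) := t^{a(\Bla)}\sum_{i \ge 0}(\dim \SH^{2i}_z A_{\Bla})\, t^{2i}, \qquad z \in \SO_{\Bmu},
\end{equation*}
satisfy the axiomatic characterization of Proposition 1.9 for $r = 2$; uniqueness will then force $p_{\Bla,\Bmu}(t) = \wt K_{\Bla,\Bmu}(t)$, and parity vanishing will be automatic since only even cohomological degrees appear on the right-hand side. Triangularity ($p_{\Bla,\Bmu} = 0$ unless $\Bmu \le \Bla$) is immediate from the support condition for $A_{\Bla}$ combined with (2.3.1), and $p_{\Bla,\Bla}(t) = t^{a(\Bla)}$ follows from the normalization $\SH^0 A_{\Bla}|_{\SO_{\Bla}} = \Ql$. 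The substantive content is the matrix factorization $P\vL\,{}^t\!P = \Om$ with $\Om$ from (1.8.3), which must be produced geometrically.

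First I would construct a $G$-equivariant proper resolution $\pi : \wt\SO_{\Bla} \to \ol\SO_{\Bla}$, obtained by enhancing Spaltenstein's classical resolution of nilpotent orbit closures with a choice of vector. Concretely, $\wt\SO_{\Bla}$ parametrizes tuples $(x, v, W, F'_\bullet, F''_\bullet)$ where $x \in \Fg\nil$, $W \subset V$ is an $x$-stable subspace of dimension $|\la'|$ containing $v$, and $F'_\bullet, F''_\bullet$ are $x$-stable complete flags in $W$ and $V/W$ refining the Jordan types prescribed by $\la'$ and $\la''$; the map sends such a tuple to $(x,v) \in \SX\nil$. It is proper and an isomorphism over $\SO_{\Bla}$. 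Over each $z \in \SO_{\Bmu}$ the fiber decomposes according to the possible $W$, and each piece is paved by affine cells obtained as the product of two classical Shimomura-type pavings, one per flag. This yields the required parity vanishing for $\SH^* A_{\Bla}$, and the decomposition theorem produces a semisimple summand decomposition $\pi_*\Ql[\dim \wt\SO_{\Bla}] \cong \bigoplus_{\Bnu \le \Bla} A_{\Bnu} \otimes L_{\Bnu}$ with graded multiplicity spaces $L_{\Bnu}$ concentrated in even degrees.

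To extract the factorization $P\vL\,{}^t\!P = \Om$, the stalk Poincar\'e polynomial of $\pi_*\Ql$ at $z \in \SO_{\Bmu}$ must be computed in two ways: on one hand as $\sum_{\Bnu \le \Bla} q_{\Bnu}(t)\, p_{\Bnu,\Bmu}(t)$, where $q_{\Bnu}(t)$ is the Poincar\'e polynomial of $L_{\Bnu}$, and on the other as the Poincar\'e polynomial of the affinely paved fiber. Constructing a $W_n$-action on this fiber cohomology, in analogy with Springer's $S_n$-action on classical Springer fibers but with the extra $(\BZ/2\BZ)^n$ factor tracking how the vector $v$ interacts with the flag, should realize the equivariant Poincar\'e polynomial as the matrix $\Om$, whose entry $\w_{\Bla,\Bmu}(t) = t^N R(\x^{\Bla}\otimes \x^{\Bmu} \otimes \ve)$ is assembled from the fake degrees of (1.8.1). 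Reading off $\x^{\Bla}$-isotypic components then yields the desired matrix identity.

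The main obstacle will be the construction of this $W_n$-equivariant structure and the verification that its bigraded character is precisely $\Om$. In the classical case Lusztig's $S_n$-action on Springer fiber cohomology arises through monodromy along the regular semisimple locus of $\Fg$ or via Hotta--Kashiwara; the enhanced version must additionally account for the sign factor $(\BZ/2\BZ)^n$, for instance through a suitable double cover of the semisimple locus of $\Fg \times V$ parametrizing an eigenbasis together with a lift of $v$. As a partial shortcut, Corollary 1.12 handles the boundary case $\Bla = (-, \la'')$ by reducing to Theorem 2.2 with $t \mapsto t^2$, and a transversal slice argument at $z \in \SO_{\Bmu}$ allows induction on $|\la'|$, providing an alternative route when a full enhanced Springer theory is not yet at hand.
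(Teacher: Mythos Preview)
The paper does not prove this theorem; it is quoted from Achar--Henderson [AH, Thm.~5.2] and used as input for what follows (Proposition~2.5, Proposition~3.2, etc.). There is therefore no proof in the present paper to compare your proposal against.

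That said, your overall architecture---verify the axioms of Proposition~1.9 geometrically via a Spaltenstein-type resolution, affine pavings of fibers, and the decomposition theorem---is the right shape and is essentially how [AH] proceeds. The genuine gap is your mechanism for producing the matrix $\Om$. You propose to build a $W_n$-action on enhanced Springer fiber cohomology, with the $(\BZ/2\BZ)^n$ factor ``tracking how the vector $v$ interacts with the flag,'' perhaps via a double cover of the regular semisimple locus. But the enhanced nilpotent cone $\Fg\Fl_n\nil \times V$ carries no natural type-$C_n$ Springer theory: the Weyl group visible from the $GL_n$ side is $S_n$, and the extra sign data has no geometric source here. This is exactly why the paper introduces the \emph{exotic} nilpotent cone in \S\S2.9--2.12; Theorem~2.12 is the $W_n$-Springer correspondence, and it lives on the symplectic side, not the enhanced $GL_n$ side. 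Your double-cover suggestion is speculative and, as far as is known, does not yield such an action.

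In [AH] the identification of the fiber Poincar\'e polynomials with the entries of $\Om$ is carried out without any $W_n$-action: one computes the paving combinatorics (or equivalently counts $\BF_q$-points) and matches the result to the fake-degree expression $t^N R(\x^{\Bla}\otimes\x^{\Bmu}\otimes\ve)$ by direct combinatorial identities. Your fallback idea of induction on $|\la'|$ via transversal slices, anchored by the $\Bla=(-,\la'')$ case reducing to Theorem~2.2, is closer to a workable route, but as written it is only a hint, not an argument.
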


Note that $\SH^{2i}$ corresponds to $t^{2i}$ in the enhanced case, which is
different from the correspondence $\SH^{2i} \lra t^i$ in the $\Fg\nil$ case.
As a corollary, we have

\begin{prop} [{[AH, Cor. 5.3]}]  
\begin{enumerate}
\item  $\wt K_{\Bla, \Bmu}(t) \in \BZ_{\ge 0}[t]$.  Moreover, 
only powers of $t$ congruent to $a(\Bla)$ 
modulo 2 occur in the polynomial.  
\item
Assume that $\Bla = (-, \la''), \Bmu = (-, \mu'')$.  Then 
$\wt K_{\Bla, \Bmu}(t) = t^n \wt K_{\la'', \mu''}(t^2)$.
\item
Assume that $\Bla = (\la',-)$ and $\Bmu = (\mu', \mu'')$.  Then 
$\wt K_{\Bla, \Bmu}(t) = \wt K_{\la', \mu' + \mu''}(t^2)$.  
\end{enumerate}
\end{prop}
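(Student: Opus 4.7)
I would handle the three parts separately, exploiting the geometric interpretation given by Theorem 2.4.  Part (i) is immediate from that theorem: $\wt K_{\Bla,\Bmu}(t)$ is expressed as $t^{a(\Bla)}$ times $\sum_i (\dim \SH^{2i}_z A_{\Bla})\,t^{2i}$, so the coefficients lie in $\BZ_{\ge 0}$ and every exponent is congruent to $a(\Bla)$ modulo $2$.  Part (ii) is precisely Proposition 1.11, which has already been established combinatorially via Proposition 1.9.

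For part (iii), the central step is the geometric identity
\begin{equation*}
\ol\SO_{(\la',-)} \;=\; \ol\SO_{\la'} \times V \quad\text{inside}\quad \SX\nil = \Fg\nil \times V.
\end{equation*}
The inclusion ``$\subset$'' follows from upper semicontinuity of Jordan type: for $(x,v) \in \SO_{(\la',-)}$ the element $x$ has Jordan type $\la'$, so any limit $(x',v') \in \ol\SO_{(\la',-)}$ has $x'$ of type at most $\la'$ and hence $x' \in \ol\SO_{\la'}$.  For the reverse inclusion, fix any $x_0 \in \SO_{\la'}$; the set $\{v : E^{x_0}v = V\}$ of cyclic vectors for the centralizer is a nonempty Zariski-open subset of $V$, and each such pair $(x_0,v)$ lies in $\SO_{(\la',-)}$.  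Taking closures (and using $G$-invariance) gives $\ol\SO_{\la'} \times V \subset \ol\SO_{(\la',-)}$.

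With this identity in hand, smoothness of $V$ yields $A_{(\la',-)} \cong p_1^* A_{\la'}$, where $p_1: \Fg\nil \times V \to \Fg\nil$ is the projection, since both sides are intersection cohomology complexes agreeing on the smooth open stratum $\SO_{\la'} \times V$.  Consequently the stalks satisfy $\SH^{2i}_{(x,v)} A_{(\la',-)} \cong \SH^{2i}_x A_{\la'}$ at every point.  For $(x,v) \in \SO_\Bmu$, the orbit classification of [AH],[T] identifies the Jordan type of $x$ on $V$ as $\nu = \mu' + \mu''$, so $x \in \SO_\nu \subset \Fg\nil$.  Combining the stalk identification with Theorems 2.4 and 2.2 and using $a((\la',-)) = 2n(\la')$ then gives
\begin{equation*}
\wt K_{(\la',-),\Bmu}(t) \;=\; t^{2n(\la')}\sum_i (\dim \SH^{2i}_x A_{\la'})\,t^{2i} \;=\; \wt K_{\la',\,\mu'+\mu''}(t^2),
\end{equation*}
where the substitution $t \mapsto t^2$ is meaningful because, by part (i), $\wt K_{(\la',-),\Bmu}(t)$ contains only even powers of $t$.

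The main obstacle lies in justifying the identification $\nu = \mu' + \mu''$ for the Jordan type of $x$ when $(x,v) \in \SO_\Bmu$.  The definition in 2.3 only prescribes the Jordan types of $x|_W$ and $x|_{V/W}$, not of $x$ on $V$ itself, so this additional equality is a genuine feature of the enhanced-nilpotent stratification coming from the specific choice $W = E^x v$.  It must either be invoked from [AH] and [T] directly or verified by a small Jordan-type computation showing that the cyclicity of $v$ for $E^x$ forces the invariant-factor decompositions on $W$ and $V/W$ to line up parts-wise rather than concatenate.
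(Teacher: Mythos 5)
Your argument is correct and, for parts (i) and (iii), is essentially the paper's own proof: (i) is read off from Theorem 2.4, and (iii) rests on the identity $\ol\SO_{(\la',-)}=\ol\SO_{\la'}\times V$ (which the paper simply cites as [AH, Lemma 3.1], whereas you sketch a direct proof via cyclic vectors for $E^{x_0}$ --- a sound argument, since any $v$ whose component in a largest Jordan block is a generator satisfies $E^{x_0}v=V$), followed by $A_{(\la',-)}\simeq A_{\la'}\boxtimes(\Ql)_V$ and a comparison of Theorems 2.2 and 2.4. The point you flag as the ``main obstacle,'' namely that $x\in\SO_{\mu'+\mu''}$ for $(x,v)\in\SO_{\Bmu}$, is likewise taken for granted in the paper's proof; it is part of the orbit classification of [AH] (and is already implicit in the paper's convention $n(\Bla)=n(\la'+\la'')$), so citing it is exactly what the paper does. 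The only genuine divergence is in (ii): the paper argues geometrically that $\SO_{(-,\la'')}$ is $\SO_{\la''}\times\{0\}$ and compares Theorems 2.2 and 2.4 directly, while you invoke the earlier combinatorial Proposition 1.11; this is legitimate and non-circular (Proposition 1.11 is proved from Proposition 1.9 alone), and the paper's Remark 2.6 explicitly endorses this as an alternate proof, though it makes (ii) rely on the fake-degree machinery of Section 1 rather than keeping the whole proposition self-contained within the geometric framework.
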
 

\begin{proof}
For the sake of completeness, we give the proof here. 
(i) is clear from the theorem. 
For (ii), take $\Bla = (-, \la'')$.  Then by the correspondence given
in 2.3, if $(x,v) \in \SO_{\Bla}$, then $v = 0$, and $x \in \SO_{\la''}$. 
It follows that $\SO_{\Bla} = \SO_{\la''}$ and that 
$A_{\Bla} \simeq A_{\la''}$.  $z \in \SO_{\Bmu}$ is also wtitten as 
$z = (x,0)$ with $x \in \SO_{\mu''}$.  
Then (ii) follows by comparing Theorem 2.2 and Theorem 2.4. 
For (iii), it was proved in [AH, Lemma 3.1] that 
$\ol \SO_{\Bla} = \ol\SO_{\la'} \times V$ for $\Bla = (\la',-)$. 
Thus $\IC(\ol\SO_{\Bla}, \Ql) \simeq \IC(\ol\SO_{\la'}, \Ql) \boxtimes (\Ql)_V$, 
where $(\Ql)_V$ is the constant sheaf $\Ql$ on $V$. 
It follows that $\SH^{2i}_zA_{\Bla} = \SH^{2i}_xA_{\la'}$ for $z = (x,v) \in \SO_{\Bmu}$.
Since $x \in \SO_{\mu' + \mu''}$, (iii) follows from Theorem 2.2 
(note that $a(\Bla) = 2n(\la')$). 
\end{proof}

\remark{2.6.}  Proposition 2.5 (ii) was also proved in Proposition 1.11 by a combinatorial
method.  We don't know whether (iii) is proved in a combinatorial way. 
However if we admit that $\wt K_{\Bla, \Bmu}(t)$ depends only on 
$\mu' + \mu''$ for $\Bla = (\la',-)$ (this is a consequence of (iii)), 
a similar argument as in the proof of Proposition 1.1 can be applied. 
 
\par\medskip
Proposition 2.5 (iii) implies the following.

\begin{cor}  
For $\nu \in \SP_n$, we have
\begin{equation*}
P_{\nu}(x^{(1)};t^2) = 
\sum_{\nu = \mu' + \mu''}t^{|\mu''|}P_{(\mu',\mu'')}(x;t).
\end{equation*}
\end{cor}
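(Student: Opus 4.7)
The plan is to mirror the strategy used in the proof of Corollary 1.12: expand a suitable Schur function in two different Hall-Littlewood bases, and invoke the non-singularity of a Kostka transition matrix to read off the desired identity.

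First I would translate Proposition 2.5(iii) from modified to ordinary Kostka polynomials. Since the paper's conventions give $n(\Bmu)=n(\mu')+n(\mu'')=n(\mu'+\mu'')$, one has $a((\mu',\mu''))=2n(\mu'+\mu'')+|\mu''|$ from (1.6.1). Unwinding $\wt K_{(\la',-),(\mu',\mu'')}(t)=\wt K_{\la',\mu'+\mu''}(t^2)$ via the substitution $t\mapsto t^{-1}$ then yields
\begin{equation*}
K_{(\la',-),(\mu',\mu'')}(t) \;=\; t^{|\mu''|}\, K_{\la',\,\mu'+\mu''}(t^2).
\end{equation*}
This is the single nontrivial computation; the rest is formal.

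Next I would expand $s_{(\la',-)}(x)=s_{\la'}(x^{(1)})$ in two ways. On the double-partition side, using 1.6 and the formula just derived,
\begin{equation*}
s_{\la'}(x^{(1)}) \;=\; \sum_{(\mu',\mu'')} K_{(\la',-),(\mu',\mu'')}(t)\,P_{(\mu',\mu'')}(x;t)
\;=\; \sum_{\nu\in\SP_n} K_{\la',\nu}(t^2)\!\!\sum_{\mu'+\mu''=\nu}\!\! t^{|\mu''|}\, P_{(\mu',\mu'')}(x;t).
\end{equation*}
On the single-partition side, regarding $s_{\la'}(x^{(1)})\in\vL(x^{(1)})[t^2]$ and applying (1.2.2) with $t$ replaced by $t^2$,
\begin{equation*}
s_{\la'}(x^{(1)}) \;=\; \sum_{\nu\in\SP_n} K_{\la',\nu}(t^2)\, P_\nu(x^{(1)};t^2).
\end{equation*}

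Finally, subtracting these two expansions gives a relation $\sum_\nu K_{\la',\nu}(t^2)\,f_\nu = 0$ valid for every $\la'\in\SP_n$, where $f_\nu$ is the difference between the two candidate expressions for $P_\nu(x^{(1)};t^2)$. Since $\bigl(K_{\la',\nu}(t^2)\bigr)_{\la',\nu\in\SP_n}$ is non-singular (upper unitriangular in the dominance order by the remarks in 1.2), we conclude $f_\nu=0$ for every $\nu$, which is exactly the claim. The main obstacle is purely bookkeeping—verifying the exponent identity $a((\mu',\mu''))-2n(\mu'+\mu'')=|\mu''|$ in Step 1—after which the argument is a direct application of the same transition-matrix principle used to prove Corollary 1.12.
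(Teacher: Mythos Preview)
Your proof is correct and follows essentially the same approach as the paper: translate Proposition~2.5(iii) into the identity $K_{(\la',-),(\mu',\mu'')}(t)=t^{|\mu''|}K_{\la',\mu'+\mu''}(t^2)$, expand $s_{\la'}(x^{(1)})$ in both the $\{P_{\Bmu}(x;t)\}$ and $\{P_\nu(x^{(1)};t^2)\}$ bases, and invert the non-singular matrix $(K_{\la',\nu}(t^2))$. Your writeup even spells out the exponent bookkeeping $a((\mu',\mu''))=2n(\mu'+\mu'')+|\mu''|$ that the paper leaves implicit.
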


\begin{proof}
It follows from  Proposition 2.5 (iii) that 
$K_{\Bla, \Bmu}(t) = t^{|\mu''|}K_{\la', \mu' + \mu''}(t^2)$ for $\Bla = (\la', -)$. 
Since $s_{\Bla}(x) = s_{\la'}(x^{(1)})$, we have
\begin{align*}
s_{\la'}(x^{(1)}) &= \sum_{\Bmu \in \CP_{n,2}}K_{\Bla, \Bmu}(t)P_{\Bmu}(x;t)  \\
                  &= \sum_{\Bmu \in \CP_{n,2}}K_{\la', \mu' + \mu''}(t^2)    
                        t^{|\mu''|}P_{\Bmu}(x;t)  \\
                  &= \sum_{\nu \in \CP_n}K_{\la', \nu}(t^2)
                        \sum_{\nu = \mu' + \mu''}
                          t^{|\mu''|} P_{(\mu',\mu'')}(x;t).
\end{align*}
On the other hand, we have
\begin{equation*}
s_{\la'}(x^{(1)}) = \sum_{\nu \in \CP_n}K_{\la', \nu}(t^2)P_{\nu}(x^{(1)};t^2).
\end{equation*}
Since $(K_{\la',\nu}(t^2))$ is a non-singular matrix, 
we obtain the required formula. 
\end{proof}

\remark{2.8.}
The formula in Corollary 2.7 suggests that the behaviour of $P_{\Bmu}(x;t)$ at
$t = 1$ is different from that of ordinally Hall-Littlewood functions
given in (1.2.1).  In fact, 
by Corollary 1.12, $P_{(-,\nu)}(x;t) = P_{\nu}(x^{(2)}; t^2)$.
Hence $P_{(-,\nu)}(x;1) = m_{\nu}(x^{(2)})$ by (1.2.1). 
Also by (1.2.1) $P_{\nu}(x^{(1)};1) = m_{\nu}(x^{(1)})$.  
Then by Corollary 2.7, we have

\begin{equation*}
m_{\nu}(x^{(1)}) = m_{\nu}(x^{(2)}) + 
        \sum_{\nu = \mu' + \mu'', \mu' \ne \emptyset}P_{(\mu',\mu'')}(x;1).
\end{equation*} 
This formula shows that a certain cancelation occurs in the expression of $P_{\Bmu}(x;1)$ 
as a sum of monomials.  Concerning this, we will have a related result later
in Proposition 3.23.  

\para{2.9.}
There exists a geometric realization of double Kostka polynomials in terms 
of the exotic nilpotent cone instead of the enhanced nilpotent cone. 
Let $V$ be a $2n$-dimensional vector space over an algebraically closed field 
$k$ of odd characteristic.  Let $G = GL(V)$ and $\th$  an involutive automorphism 
of $G$ such that $G^{\th} = Sp(V)$. Put $H = G^{\th}$. Let $\Fg$ be the Lie algebra 
of $G$.  $\th$ induces a linear automorphism of order 2 on $\Fg$, which we denote 
also by $\th$.  $\Fg$ is decomposed as $\Fg = \Fg^{\th} \oplus \Fg^{-\th}$, 
where $\Fg^{\pm\th}$ is the eigenspace of $\th$ with eigenvalue $\pm 1$. 
Thus $\Fg^{\pm\th}$ are $H$-invariant subspaces in $\Fg$. We consider the direct 
product $\SX = \Fg^{-\th} \times V$, on which $H$ acts diagonally. 
Put $\Fg^{-\th}\nil = \Fg^{-\th} \cap \Fg\nil$.  Then $\Fg^{-\th}\nil$ is $H$-stable, 
and we consider $\SX\nil = \Fg^{-\th}\nil \times V$.  $\SX\nil$ is an $H$-invariant subset 
of $\SX$, and is called the exotic nilpotent cone.
It is known by Kato [K1] that the set of $H$-orbits in $\SX\nil$ is in bijective 
correspondence with $\SP_{n,2}$. We denote by $\SO_{\Bla}$ the $H$-orbit corresponding 
to $\Bla \in \SP_{n,2}$. It is also known by [AH] that the closure relations for 
$\SO_{\Bla}$ are given by the partial order $\le$ on $\SP_{n,2}$. 
We consider the intersection cohomology complex $A_{\Bla} = \IC(\ol\SO_{\Bla}, \Ql)$
on $\SX\nil$. 
The following result was proved by Kato [K2], and [SS2], independently.

\begin{thm}  
Assume that $A_{\Bla}$ is attached to the exotic nilpotent cone.  Then 
$\SH^iA_{\Bla} = 0$ unless $i \equiv 0 \pmod 4$.  
For $z \in \SO_{\Bmu} \subset \ol\SO_{\Bla}$, we have
\begin{equation*}
\wt K_{\Bla, \Bmu}(t) = t^{a(\Bla)}\sum_{i \ge 0}(\dim \SH^{4i}_zA_{\Bla})t^{2i}.
\end{equation*}  
\end{thm}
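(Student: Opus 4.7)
The plan is to follow the Lusztig--Shoji algorithm adapted to the exotic nilpotent cone. The key geometric inputs, worked out by Kato in [K2] and by Shoji--Sorlin in [SS2], are an exotic Springer-type resolution together with a mod-$4$ parity vanishing for the IC stalks.

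First I would construct a proper $H$-equivariant morphism $\pi \colon \wt\SX\nil \to \SX\nil$ from a smooth variety, in the spirit of the Grothendieck--Springer map but incorporating the vector $v \in V$. A natural candidate is
\[
\wt\SX\nil = \{(x,v,F) \in \SX\nil \times \CB \mid x F_i \subset F_i \text{ for all } i,\ v \in F_n\},
\]
where $\CB$ is the variety of complete isotropic flags and $F_n$ is the Lagrangian step. After verifying semismallness, the decomposition theorem yields
\[
\pi_*\Ql[\dim\wt\SX\nil] \simeq \bigoplus_{\Bla \in \SP_{n,2}} V_{\Bla} \otimes A_{\Bla},
\]
with $V_{\Bla}$ graded multiplicity spaces carrying a natural $W_{n,2}$-module structure produced by the Springer formalism.

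Next I would establish the parity vanishing by exhibiting an affine paving of $\pi^{-1}(z)$ via a Bia\l ynicki-Birula argument using a generic cocharacter, which kills odd cohomology. To upgrade this to mod-$4$ vanishing, I would exploit the extra $\BG_m$-action $s\cdot(x,v) = (s^2 x, sv)$ on $\SX\nil$, under which every $H$-orbit closure is stable; purity combined with this doubled weight forces $\SH^i A_{\Bla} = 0$ unless $i \equiv 0 \pmod 4$. Setting $P_{\Bla,\Bmu}(t) := t^{a(\Bla)} \sum_i (\dim \SH^{4i}_z A_{\Bla})\,t^{2i}$ for $z \in \SO_{\Bmu}$, the decomposition theorem yields a matrix equation $P \vL \,{}^t\!P = \Om$ with $\vL$ diagonal and $\Om_{\Bla,\Bmu}$ the Poincar\'e polynomial of $\pi^{-1}(z)$ refined by the $W_{n,2}$-isotypes. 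Identifying $H^*(\CB) \cong R(W_{n,2})$ as graded $W_{n,2}$-modules and invoking the exotic Springer correspondence, I would match $\Om$ with the fake-degree matrix in (1.8.3) for $r = 2$. The normalization conditions in Proposition 1.9 then force $P_{\Bla,\Bmu}(t) = \wt K_{\Bla,\Bmu}(t)$.

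The main obstacle is the mod-$4$ parity vanishing together with the correct accounting of degree shifts. This mod-$4$ feature is distinctive to the exotic setting (contrast the enhanced case, where only mod-$2$ vanishing holds) and requires simultaneously exploiting the $\BG_m$-symmetry and verifying semismallness of $\pi$. The companion challenge will be matching the degree shift $a(\Bla) = 2n(\Bla) + |\la''|$ with the geometric shift $\dim \wt\SX\nil - \dim \SO_{\Bla}$, which pins down the normalization in the Lusztig--Shoji matrix equation and ultimately forces the $t^{2i}$ spacing in the formula.
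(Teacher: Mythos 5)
You should first note that the paper does not prove this theorem at all: it is imported from [K2] and [SS2] (``The following result was proved by Kato [K2], and [SS2], independently''), so there is no internal argument to compare with. Your outline does follow the strategy of those references --- the exotic Springer resolution $\pi_1 : \wt\SX\nil \to \SX\nil$ of 2.11, semismallness, the decomposition theorem in the form of Theorem 2.12, and a Lusztig--Shoji matrix equation as in Proposition 1.9 whose unique solution is $\wt K_{\Bla,\Bmu}(t)$ --- and the degree bookkeeping via $\dim\SX\nil - \dim\SO_{\Bla} = 2a(\Bla)$ is the right normalization. Modulo the substantial but citable inputs (semismallness, identification of the supports and of the multiplicity spaces $V_{\Bla}$, purity and odd-degree vanishing for the fibres), that part of the plan is sound.

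The genuine gap is your argument for the mod-$4$ vanishing. The mechanism you propose --- the contracting action $s\cdot(x,v)=(s^2x,sv)$ with doubled weight on $x$, combined with purity --- cannot force $\SH^iA_{\Bla}=0$ for $i\equiv 2\pmod 4$: exactly the same $\GG_m$-action, with the same weights, exists on the enhanced nilpotent cone $\Fg\nil\times V$ and stabilizes every orbit closure there, yet by Theorem 2.4 the enhanced IC sheaves satisfy only mod-$2$ vanishing. Concretely, for $n=2$, $\Bla=(1;1)$, $\Bmu=(-;1^2)$ one reads off $\wt K_{\Bla,\Bmu}(t)=t+t^3$ from Table 1, which in the normalization of Theorem 2.4 (with $a(\Bla)=1$) says that $\SH^0_zA_{\Bla}$ and $\SH^2_zA_{\Bla}$ are both nonzero on the enhanced cone. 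So the $\GG_m$-symmetry sees no difference between the two cones and cannot be the source of the extra parity; moreover, a weight argument of this kind constrains Frobenius weights, not cohomological degrees, unless pointwise purity of Tate type has already been established. In the actual proofs the mod-$4$ statement is not an a priori geometric parity: one first derives the matrix equation, identifies the stalk polynomials $t^{a(\Bla)}\sum_{i}(\dim\SH^{2i}_zA_{\Bla})t^{i}$ with $\wt K_{\Bla,\Bmu}(t)$, and only then deduces $\SH^{2i}_zA_{\Bla}=0$ for odd $i$ from the combinatorial fact (Proposition 2.5 (i)) that only powers of $t$ congruent to $a(\Bla)$ modulo $2$ occur in $\wt K_{\Bla,\Bmu}(t)$. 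You would need either to import that parity property of the double Kostka polynomials or to prove it independently; as written, the step ``purity combined with this doubled weight forces $\SH^iA_{\Bla}=0$ unless $i\equiv 0\pmod 4$'' is not a valid principle.
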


\para{2.11.}
Let $W_n$ be the Weyl group of type $C_n$.  
The advantage of the use of 
the exotic nilpotent cone relies on the fact that it has a good relationship 
with representations of Weyl groups, as explained below. Let $B$ be a $\th$-stable 
Borel subgroup of $G$.  Then $B^{\th}$ is a Borel subgrouop of $H$, and we denote 
by $\SB$ the flag variety $H/B^{\th}$ of $H$. 
Let $0 = M_0 \subset M_1 \subset \cdots \subset M_n \subset V$ be the (full) 
isotropic flag fixed by $B^{\th}$.  Hence $M_n$ is a maximal isotropic subspace of $V$.  
Put
\begin{equation*}
\wt\SX\nil = \{ (x, v, gB^{\th}) \in \Fg^{-\th}\nil \times V \times \SB
                     \mid g\iv x \in \Lie B, g\iv v \in M_n \},
\end{equation*}
and define a map $\pi_1 : \wt\SX\nil \to \SX\nil$ by $(x,v, gB^{\th}) \mapsto (x,v)$.
Then $\wt\SX\nil$ is smooth, irreducible and $\pi_1$ is proper surjective. 
Let  
$V_{\Bla}$ be the irreducible representation of $W_n$ corresponidng to $\x^{\Bla}$ 
($\Bla  \in \SP_{n,2}$). 
We consider the direct iamge $(\pi_1)_*\Ql$ of the constant sheaf $\Ql$ on $\wt\SX\nil$.
The following result is an analogue of the Springer correspondene for reductive groups, and 
was proved by Kato [K1], and [SS1], independently.

\begin{thm}  
$(\pi_1)_*\Ql[\dim \SX\nil]$ is a semisimple perverse sheaf on $\SX\nil$, equipped with 
$W_n$-action, and is decomposed as 

\begin{equation*}
\tag{2.12.1}
(\pi_1)_*\Ql[\dim \SX\nil] \simeq \bigoplus_{\Bla \in \SP_{n,2}}V_{\Bla} 
        \otimes A_{\Bla}[\dim \SO_{\Bla}],
\end{equation*}
where $A_{\Bla}[\dim \SO_{\Bla}]$ is a simple perverse sheaf on $\SX\nil$. 
\end{thm}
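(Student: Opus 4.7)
The plan is to imitate the classical Springer correspondence of Lusztig-Borho-MacPherson, carried over to the exotic setting. The three pillars to be established are: (a) $\pi_1$ is semismall (so that the pushforward is a perverse sheaf), (b) there is a $W_n$-action on $(\pi_1)_*\Ql$, and (c) the isotypic decomposition under this action matches the IC-sheaves of $H$-orbit closures indexed by $\SP_{n,2}$.

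First I would analyze $\wt\SX\nil$. It fibers over $\SB = H/B^{\th}$ with fiber $(\Lie B \cap \Fg^{-\th}\nil) \times M_n$, so it is smooth and irreducible, and a dimension count via this fibration gives $\dim \wt\SX\nil = \dim \SX\nil$. The main geometric step is to verify that $\pi_1$ is semismall, i.e., for every $H$-orbit $\SO_{\Bmu} \subset \SX\nil$,
\begin{equation*}
2\dim \pi_1\iv(z) \le \codim \SO_{\Bmu} \qquad (z \in \SO_{\Bmu}).
\end{equation*}
This is done by identifying $\pi_1\iv(z)$ with an exotic Springer fiber and computing its dimension in terms of the double partition $\Bmu$ via the orbit classification of Kato [K1]; the inequality is then a parts-counting inequality for double partitions paralleling Spaltenstein's estimate in the classical case. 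Once semismallness is in hand, the BBD decomposition theorem guarantees that $(\pi_1)_*\Ql[\dim \SX\nil]$ is a semisimple perverse sheaf.

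Second, for the $W_n$-action, I would pass to the ``Grothendieck-type'' simultaneous resolution: drop the nilpotency condition and consider $\wt\SX = \{(x,v,gB^{\th}) \mid g\iv x \in \Lie B, \ g\iv v \in M_n\}$ mapping to $\SX = \Fg^{-\th} \times V$. The key is that over an $H$-stable open subset $\SX^{\reg} \subset \SX$ (where $x$ is regular semisimple in $\Fg^{-\th}$ and $v$ is in generic position with respect to the centralizer), the restricted map becomes an étale Galois cover with Galois group $W_n$. This gives the $W_n$-action on $(\pi|_{\reg})_*\Ql$ by deck transformations, and since $\pi$ is small and $\wt\SX$ is smooth, this action extends uniquely to all of $(\pi)_*\Ql[\dim \SX]$, and in particular restricts to a $W_n$-action on $(\pi_1)_*\Ql[\dim \SX\nil]$ via proper base change along the inclusion $\SX\nil \hra \SX$.

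Third, by semisimplicity of $\Ql[W_n]$, one writes
\begin{equation*}
(\pi_1)_*\Ql[\dim \SX\nil] \simeq \bigoplus_{\Bla \in \SP_{n,2}} V_{\Bla} \otimes K_{\Bla}
\end{equation*}
with each $K_{\Bla}$ a semisimple perverse sheaf. It remains to identify $K_{\Bla}$ with $A_{\Bla}[\dim \SO_{\Bla}]$. Restricting to the open orbit $\SO_{\Bla} \subset \ol\SO_{\Bla}$ where $\pi_1$ is finite étale, one computes the local system and recognizes its multiplicity in the isotypic component of $V_{\Bla}$ as $1$, using that the component group of an exotic orbit is trivial (a feature specific to the exotic/symplectic setting and the main simplification compared with the enhanced case). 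The intermediate extension then identifies $K_{\Bla}$ with $A_{\Bla}[\dim \SO_{\Bla}]$.

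The hard part is step (a)--(b): the semismallness estimate and the construction of the $W_n$-action. Semismallness requires the detailed orbit-by-orbit dimension formulas of Kato for exotic nilpotent orbits, and the $W_n$-action demands the verification that the generic fiber of $\pi$ really is a $W_n$-torsor, which in turn hinges on showing that the simultaneous centralizer of a regular semisimple $x \in \Fg^{-\th}$ and a generic $v \in V$ has Weyl group exactly $W_n$ (not a proper subgroup). Once these two inputs are secured, the remainder is formal IC-sheaf bookkeeping.
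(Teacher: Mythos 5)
The paper does not actually prove Theorem 2.12; it cites [K1] and [SS1], where the result is established. Your outline is the standard Borho--MacPherson/Lusztig strategy transported to the exotic setting, and in broad strokes it is the route taken in [SS1]: semismallness of $\pi_1$, a $W_n$-action obtained from the Grothendieck-type family $\wt\SX \to \SX$ over the locus where $x$ is regular semisimple and $v$ is generic, and then isotypic decomposition. You correctly isolate the genuinely exotic input in step (b): for the pair $(\Fg\Fl_{2n}, \Fs\Fp_{2n})$ the centralizer in $H$ of a regular semisimple $x \in \Fg^{-\th}$ is $(SL_2)^n$ rather than a torus, and it is only the extra vector $v$ that cuts this down so that the generic fiber becomes a $W_n$-torsor. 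That is indeed the crux.

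There are, however, two concrete problems with your step (c). First, the assertion that $\pi_1$ is finite \'etale over each orbit $\SO_{\Bla}$ is false: for a relevant stratum of a semismall map the fiber over $z \in \SO_{\Bla}$ has dimension $\tfrac12\codim\SO_{\Bla} = a(\Bla)$, which is positive for every $\Bla \ne ((n),-)$ (compare Proposition 2.14, where $H^{2i}(\SB_z,\Ql)$ is nonzero up to $i = a(\Bmu)$). The multiplicity space of $\IC(\ol\SO_{\Bla},\CL)[\dim\SO_{\Bla}]$ must instead be read off from the local system $R^{2a(\Bla)}(\pi_1)_*\Ql|_{\SO_{\Bla}}$, i.e.\ from the top cohomology of the exotic Springer fiber with its $W_n$- and stabilizer-actions; this is fixable but not what you wrote. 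Second, and more seriously, the statement being proved specifies \emph{which} $V_{\Bla}$ pairs with \emph{which} $A_{\Bla}$, under the explicit parametrization of $\Irr(W_n)$ by $\SP_{n,2}$ fixed in 1.8. Triviality of the component groups only tells you that each $A_{\Bla}$ occurs with some irreducible multiplicity space and that no nontrivial local systems appear; it does not determine the bijection. Pinning down that the orbit $\SO_{\Bla}$ corresponds to $\x^{\Bla}$ (normalized so that the open orbit carries the identity character) is the substantive content of the exotic Springer correspondence and occupies much of [K1] and [SS1, SS2]; it requires additional input such as restriction to Levi/parabolic subalgebras, comparison with the enhanced nilpotent cone, or an inductive computation of the associated Green functions. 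Calling this ``formal IC-sheaf bookkeeping'' leaves the main identification unproved.
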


\para{2.13.}
For each $z = (x,v) \in \SX\nil$, put 

\begin{equation*}
\SB_z = \{ gB^{\th} \in \SB \mid g\iv x \in \Lie B, g\iv v \in M_n \}. 
\end{equation*}
$\SB_z$ is isomorphic to $\pi_1\iv(z)$, and is called the Springer fibre. 
Since $\SH^i_z((\pi_1)_*\Ql) \simeq H^i(\SB_z, \Ql)$, 
$H^i(\SB_z, \Ql)$ has a structure of $W_n$-module, which we call 
the Springer representation of $W_n$.
Put $K = (\pi_1)_*\Ql$.  By taking the stalk at $z \in \SX\nil$ of the $i$-th 
cohomology of both sides in (2.12.1),  
we have an isomorphism of $W_n$-modules, 
\begin{equation*}
\SH^i_zK \simeq H^i(\SB_z, \Ql) \simeq 
            \bigoplus_{\Bla \in \SP_{n,2}}V_{\Bla}\otimes 
\SH^{i + \dim \SO_{\Bla} - \dim \SX\nil}_zA_{\Bla}.
\end{equation*}
Since $\dim \SX\nil - \dim \SO_{\Bla} = 2a(\Bla)$ (see [SS2, (5.7.1)], this together with 
Theorem 2.10 imply the following result.

\begin{prop}  
Assume that $z \in \SO_{\Bmu}$.  Then 
$H^i(\SB_z, \Ql) = 0$ for odd $i$, and we have
\begin{equation*}
\wt K_{\Bla, \Bmu}(t) = \sum_{i \ge 0}\lp H^{2i}(\SB_z, \Ql), V_{\Bla}\,\rp_{\,W_n} t^i,
\end{equation*}
namely, the coefficient of $t^i$ in $\wt K_{\Bla, \Bmu}(t)$ is given by the 
multiplicity of $V_{\Bla}$ in the $W_n$-module $H^{2i}(\SB_z, \Ql)$. 
\end{prop}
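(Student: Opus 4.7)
The plan is to combine the $W_n$-equivariant decomposition (2.12.1) of $(\pi_1)_*\Ql$ with the explicit stalk description of $A_{\Bla}$ given in Theorem 2.10. As already recorded in 2.13, passing to the stalk at $z \in \SO_{\Bmu}$ in (2.12.1) and using the dimension identity $\dim \SX\nil - \dim \SO_{\Bla} = 2a(\Bla)$ yields a decomposition of $W_n$-modules
\begin{equation*}
H^i(\SB_z, \Ql) \simeq \bigoplus_{\Bla \in \SP_{n,2}} V_{\Bla} \otimes \SH^{i - 2a(\Bla)}_z A_{\Bla}.
\end{equation*}
By Schur's lemma, extracting the isotypic component for $V_{\Bla}$ gives the multiplicity formula
\begin{equation*}
\lp H^i(\SB_z, \Ql), V_{\Bla} \rp_{\,W_n} = \dim \SH^{i - 2a(\Bla)}_z A_{\Bla},
\end{equation*}
which is the key input.

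The first assertion, the vanishing of $H^i(\SB_z, \Ql)$ in odd degrees, then follows immediately. Theorem 2.10 gives $\SH^j A_{\Bla} = 0$ unless $j \equiv 0 \pmod 4$, so in particular $\SH^{i - 2a(\Bla)}_z A_{\Bla} = 0$ whenever $i$ is odd (since $2a(\Bla)$ is even), and every summand on the right vanishes.

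For the generating-function identity, set $i = 2k$. The nonzero contributions come from those $k$ with $2k - 2a(\Bla) \equiv 0 \pmod 4$, i.e.\ $k = a(\Bla) + 2j$ with $j \ge 0$, and substituting gives
\begin{equation*}
\sum_{k \ge 0} \lp H^{2k}(\SB_z, \Ql), V_{\Bla} \rp_{\,W_n}\, t^k
 = t^{a(\Bla)} \sum_{j \ge 0} (\dim \SH^{4j}_z A_{\Bla})\, t^{2j},
\end{equation*}
and by Theorem 2.10 the right-hand side is exactly $\wt K_{\Bla, \Bmu}(t)$, as required. I do not expect a serious obstacle here: the statement is essentially a bookkeeping consequence of the semisimple decomposition (2.12.1) and Theorem 2.10, the only external inputs being the dimension formula $\dim \SX\nil - \dim \SO_{\Bla} = 2a(\Bla)$ quoted from [SS2, (5.7.1)] and the identification $\pi_1\iv(z) \simeq \SB_z$. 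The one point that requires a moment's care is matching parities: the shift by $2a(\Bla)$ is always even, which is what makes both the odd-degree vanishing and the substitution $k \mapsto a(\Bla) + 2j$ go through cleanly.
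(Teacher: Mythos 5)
Your proposal is correct and follows exactly the route the paper takes: the paper's own argument consists of taking the stalk of the decomposition (2.12.1) at $z$, using $\dim \SX\nil - \dim \SO_{\Bla} = 2a(\Bla)$ to get $H^i(\SB_z,\Ql) \simeq \bigoplus_{\Bla} V_{\Bla}\otimes \SH^{i-2a(\Bla)}_z A_{\Bla}$, and then invoking Theorem 2.10, which is precisely your computation. You have merely written out the parity bookkeeping that the paper leaves implicit.
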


\par\bigskip
\section{Combinatorial properties of $K_{\Bla,\Bmu}(t)$ and $P_{\Bmu}(x;t)$}

\para{3.1.}
In [AH], 
Achar-Henderson gave a formula expressing double Kostka polynomials in terms of
various ordinary Kostka polynomials.  We consider the enhanced nilpotent cone 
$\SX\nil = \Fg\nil \times V$ as in 2.3, under the assumption that $k$ is an 
algebraic closure of a finite field $\BF_q$.  Take $\Bmu, \Bnu \in \SP_{n,2}$. 
For each $z = (x,v) \in \SO_{\Bmu}$ and $\Bnu = (\nu',\nu'')$, we define 
a variety $\SG^{\Bmu}_{\Bnu}$ by

\begin{equation*}
\tag{3.1.1}
\begin{split}
\SG^{\Bmu}_{\Bnu} = \{W \subset &V \mid W  \text{ : $x$-stable subspace, }  v \in W, \\ 
               &x|_W \text{ type : $\nu'$, } 
                x|_{V/W} \text{ type : $\nu''$ }  \}.    
\end{split}
\end{equation*}
Note that if $z \in \SO_{\Bmu}(\BF_q)$, the variety $\SG^{\Bmu}_{\Bnu}$ is defined over 
$\BF_q$, and one can count the cardinality $|\SG^{\Bmu}_{\Bnu}(\BF_q)|$ of $\BF_q$-fixed 
points in $\SG^{\Bmu}_{\Bnu}$.  Clearly, $|\SG^{\Bmu}_{\Bnu}(\BF_q)|$ is independent of the
choice of $z \in \SO_{\Bmu}(\BF_q)$. 
\begin{prop}[{Achar-Henderson [AH, Prop. 5.8]}]  
Let $\Bmu, \Bnu \in \SP_{n,2}$.  
\begin{enumerate}
\item
There exists a polynomial $g_{\Bnu}^{\Bmu}(t) \in \BZ[t]$ such that 
$|\SG_{z, \Bnu}(\BF_q)| = g^{\Bmu}_{\Bnu}(q)$ for any finite field $\BF_q$ 
such that $z \in \SO_{\Bmu}(\BF_q)$.
\item 
Take $\Bla = (\la', \la''), \Bnu = (\nu', \nu'')$. 
Then we have
\begin{equation*}
\tag{3.2.1}
\wt K_{\Bla, \Bmu}(t) = t^{a(\Bla) - 2n(\Bla)}
      \sum_{\substack{\nu' \le \la'  \\ \nu'' \le \la''}}
           g^{\Bmu}_{\Bnu}(t^2)\wt K_{\la'\nu'}(t^2)\wt K_{\la''\nu''}(t^2).
\end{equation*}
\end{enumerate}
\end{prop}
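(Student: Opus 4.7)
For part (i), I would identify $\SG^{\Bmu}_{\Bnu}$ with a constructible subvariety of the Grassmannian $\mathrm{Gr}(|\nu'|,V)$, cut out by the closed conditions that $W$ is $x$-stable and contains $v$, together with the locally closed conditions prescribing the Jordan types $\nu'$ of $x|_W$ and $\nu''$ of $x|_{V/W}$. Each such condition is defined by polynomial equations over $\BZ$, and standard techniques for fixed-point varieties of nilpotent elements (a Spaltenstein-style cellular decomposition obtained from the partial flag varieties associated to $W$ and $V/W$) yield polynomial $\BF_q$-point counts; independence of the chosen $z\in\SO_{\Bmu}(\BF_q)$ follows since $|\SG^{\Bmu}_{\Bnu}(\BF_q)|$ is constant on $G(\BF_q)$-orbits.

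For part (ii), the strategy is to extract the formula from a geometric stalk comparison. By Theorem 2.4 the LHS equals $t^{a(\Bla)}\sum_i\dim\SH^{2i}_z(A_{\Bla})\,t^{2i}$ for $z\in\SO_{\Bmu}$, and by Theorem 2.2 applied with $t$ replaced by $t^2$, the factor $\wt K_{\la',\nu'}(t^2)\wt K_{\la'',\nu''}(t^2)$ equals $t^{2n(\la')+2n(\la'')}\sum_{j,k}\dim\SH^{2j}_{x'}(A_{\la'})\cdot\dim\SH^{2k}_{x''}(A_{\la''})\,t^{2(j+k)}$ for $(x',x'')\in\SO_{\nu'}\times\SO_{\nu''}$. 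Since $a(\Bla)-2n(\Bla)=|\la''|$, the identity reduces to the stalk identity
\begin{equation*}
\sum_i\dim\SH^{2i}_z(A_{\Bla})\,t^{2i} \;=\; \sum_{\nu'\le\la',\,\nu''\le\la''} g^{\Bmu}_{\Bnu}(t^2)\,\Bigl(\sum_j\dim\SH^{2j}_{x'}(A_{\la'})\,t^{2j}\Bigr)\Bigl(\sum_k\dim\SH^{2k}_{x''}(A_{\la''})\,t^{2k}\Bigr).
\end{equation*}
To prove this, introduce
\begin{equation*}
\wt\SX_{\Bla}=\bigl\{((x,v),W):(x,v)\in\SX\nil,\ W \text{ $x$-stable},\ v\in W,\ \dim W=|\la'|,\ x|_W\in\ol\SO_{\la'},\ x|_{V/W}\in\ol\SO_{\la''}\bigr\},
\end{equation*}
with the proper projection $\pi:\wt\SX_{\Bla}\to\SX\nil$ having image $\ol\SO_{\Bla}$. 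Fixing a standard $W_0\subset V$ with parabolic stabilizer $P$ exhibits $\wt\SX_{\Bla}$ as the $G$-bundle $G\times^P(\ol\SO_{\la'}\times\ol\SO_{\la''}\times W_0\times\Fu_P)$, equipped with a second projection to the Levi data $(x|_W,x|_{V/W})$. Applying proper base change to $\pi_{*}$ of the pullback of $A_{\la'}\boxtimes A_{\la''}$, the stalk at $z$ is computed by the stratification $\pi\iv(z)=\coprod_{\Bnu}\SG^{\Bmu}_{\Bnu}$, producing the RHS above. On the other hand, the decomposition theorem combined with the dimension identity $\dim\SX\nil-\dim\SO_{\Bla}=2a(\Bla)$ identifies the simple summand supported on $\ol\SO_{\Bla}$ as a shift of $A_{\Bla}$, giving the LHS.

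The main obstacle is the decomposition-theorem step: one must verify that the simple perverse summand supported on $\ol\SO_{\Bla}$ appearing in the pushforward is indeed $A_{\Bla}$ with the correct shift, and that no other summands on $\ol\SO_{\Bla}$ obstruct the stalk comparison. This requires a semismall-type dimension estimate relating the fibre dimensions of $\pi$ to the codimensions of strata in $\ol\SO_{\Bla}$, together with careful bookkeeping of cohomological shifts against $\dim G/P$, $\dim\Fu_P$, $|\la'|$ and $a(\Bla)$; this shift-tracking is precisely what yields the exponent $a(\Bla)-2n(\Bla)=|\la''|$ in the final prefactor.
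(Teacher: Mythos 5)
The paper gives no proof of this proposition: it is imported verbatim from [AH, Prop.\ 5.8] and used as an input (the only text following it is the reformulation in 3.3), so there is no internal argument to compare yours against, and a complete proof would be genuinely new content here. Judged on its own terms, your sketch for (i) is not yet an argument: being a constructible subvariety cut out by equations over $\BZ$ implies nothing about point counts (an elliptic curve qualifies). What is needed is an actual paving of $\SG^{\Bmu}_{\Bnu}$, or a reduction to the classical Hall-polynomial count of [M, II] by fibering the choice of $W$ over the relative position of $v$ and the $x$-module data; you name the right kind of tool but do not produce it, and that combinatorial step is where the content of (i) lies. (Independence of $z$ also quietly uses that $\SO_{\Bmu}(\BF_q)$ is a single $G(\BF_q)$-orbit, i.e.\ connectedness of stabilizers.)

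For (ii), your reduction of (3.2.1) to the stalk identity is correct arithmetic, and $\wt\SX_{\Bla}$ with its proper map $\pi$ is the right object; $\pi$ is indeed birational onto $\ol\SO_{\Bla}$ since for $(x,v)\in\SO_{\Bla}$ the dimension constraint forces $W=E^xv$. But the step you defer --- that $\pi_*$ of the pulled-back complex is exactly $A_{\Bla}$ up to shift with no further summands --- is not shift bookkeeping; it is the whole theorem, and the identical construction fails one step away. On the un-enhanced nilpotent cone with $\la'=\la''=(1)$, $n=2$, your $\pi$ is the Springer resolution $T^*\PP^1\to\Fg\nil$, which is semismall but not small: the pushforward acquires a skyscraper summand at $0$, and correspondingly $g^{(1^2)}_{(1),(1)}(q)=q+1$ while $\wt K_{(2),(1^2)}(t)=1$, so the naive analogue of (3.2.1) is false there. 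Hence a ``semismall-type dimension estimate'' cannot suffice; you must prove the strictly stronger assertion that $\pi$ is small, i.e.\ $2\dim\pi\iv(z)<\dim\SO_{\Bla}-\dim\SO_{\Bmu}$ for every $\Bmu<\Bla$ (it is the extra condition $v\in W$, raising $\dim\SO_{\Bla}$, that restores this in the enhanced cone), and this requires the dimension formulas for the orbits $\SO_{\Bmu}$ and the fibres $\SG^{\Bmu}_{\Bnu}$ --- essentially the substance of [AH, \S\S 3--5]. Two further ingredients must also be supplied: identifying the stalk of the pushforward with $\sum_{\Bnu}g^{\Bmu}_{\Bnu}(t^2)(\cdots)(\cdots)$ requires either degeneration of the spectral sequence of the stratification $\pi\iv(z)=\coprod_{\Bnu}\SG^{\Bmu}_{\Bnu}$ or a Grothendieck--Lefschetz trace computation, and converting the resulting identity of trace functions into the stated identity of polynomials in $t$ needs pointwise purity (Frobenius acting by $q^{j}$ on $\SH^{2j}$, which is where Theorem 2.2 in Lusztig's form enters). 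As written, the proposal is a plausible plan whose decisive estimate is missing.
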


\para{3.3.}
The formula (3.2.1) can be rewritten as 

\begin{equation*}
\tag{3.3.1}
K_{\Bla, \Bmu}(t) = t^{|\mu''| - |\la''|}\sum_{\Bnu = (\nu', \nu'') \in \SP_{n,2}}
                       t^{2n(\Bmu) - 2n(\Bnu)}g^{\Bmu}_{\Bnu}(t^{-2})
                            K_{\la'\nu'}(t^2)K_{\la''\nu''}(t^2).
\end{equation*}
Note that $g^{\Bmu}_{\Bnu}(t)$ is a generalization of Hall polynomials.
If $\Bmu = (-, \mu'')$, then $z = (x,v)$ with $v = 0$. In that case, 
$g^{\Bmu}_{\Bnu}(t)$ coincides with the original Hall polynomial 
$g^{\mu''}_{\nu'\nu''}(t)$ given in [M, II, 4].  In particular, 
if $g^{\mu}_{\nu'\nu''}(t) \ne 0$, then $g^{\mu}_{\nu'\nu''}(t)$ 
is a polynomial with degree $n(\mu) - n(\nu') - n(\nu'')$ and leading 
coefficient $c^{\mu}_{\nu'\nu''}$, where $c^{\mu}_{\nu'\nu''}$ is 
the Littlewood-Richardson coeffcient determined by the following conditions;
for partitions $\la, \mu,  \nu$, 
\begin{equation*}
\tag{3.3.2}
s_{\mu}s_{\nu} = \sum_{\la}c^{\la}_{\mu\nu}s_{\la}.
\end{equation*}
\par
For partitions $\la, \mu, \nu$, we define a polynomial $f^{\la}_{\mu\nu}(t)$ by 
\begin{equation*}
\tag{3.3.3}
P_{\mu}(y;t)P_{\nu}(y;t) = \sum_{\la}f^{\la}_{\mu\nu}(t)P_{\la}(y;t).
\end{equation*}
Then it is known by [M, III, (3.6)] that 
\begin{equation*}
\tag{3.3.4}
g^{\la}_{\mu\nu}(t) = t^{n(\la)-n(\mu) - n(\nu)}f^{\la}_{\mu\nu}(t^{-1}).
\end{equation*}
\par
We now assume that $\Bmu = (-, \mu'')$.  
Substituting (3.3.4) into (3.3.1), we have 

\begin{equation*}
\tag{3.3.5}
K_{\Bla, \Bmu}(t) = t^{|\la'|}\sum_{\nu',\nu''}f^{\mu''}_{\nu'\nu''}(t^2)
                       K_{\la'\nu'}(t^2)K_{\la''\nu''}(t^2).
\end{equation*}

\begin{lem}  
Assume that $\Bmu = (-,\mu'')$.  Then we have
\begin{align*}
\tag{3.4.1}
K_{\Bla,\Bmu}(t) &= t^{|\la'|}\sum_{\nu',\nu''}f^{\mu''}_{\nu'\nu''}(t^2)
                     K_{\la'\nu'}(t^2)K_{\la''\nu''}(t^2),  \\ 
\tag{3.4.2}
K_{\Bla,\Bmu}(t) &= t^{|\la'|}\sum_{\eta}c^{\eta}_{\la'\la''}K_{\eta,\mu''}(t^2).
\end{align*}
\end{lem}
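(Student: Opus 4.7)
The first identity (3.4.1) is essentially a restatement of what was just established. Indeed, under the hypothesis $\Bmu = (-,\mu'')$, the Hall polynomial $g^{\Bmu}_{\Bnu}(t)$ reduces to the classical Hall polynomial $g^{\mu''}_{\nu'\nu''}(t)$ (so the sum collapses to those $\Bnu = (\nu',\nu'')$ with $|\nu'| + |\nu''| = |\mu''|$), and an elementary calculation with $n(\Bmu) = n(\mu'')$ and $a(\Bla) - 2n(\Bla) = |\la''|$ converts (3.2.1) into (3.3.1). Substituting the Macdonald identity (3.3.4) for $g^{\mu''}_{\nu'\nu''}$ yields (3.3.5), which is precisely (3.4.1). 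So for (3.4.1) it is enough to write down this reduction from Proposition 3.2.

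For (3.4.2) the plan is to combine (3.4.1) with the Littlewood--Richardson expansion of a product of Schur functions and the two change-of-basis relations between $\{s_\la\}$ and $\{P_\la(y;t)\}$. Concretely, expanding each Schur function in the Hall--Littlewood basis,
\begin{equation*}
s_{\la'}(y)\,s_{\la''}(y) = \sum_{\nu',\nu''} K_{\la',\nu'}(t)\,K_{\la'',\nu''}(t)\,P_{\nu'}(y;t)\,P_{\nu''}(y;t),
\end{equation*}
and then using (3.3.3), the right-hand side becomes
\begin{equation*}
\sum_{\mu''}\Bigl(\sum_{\nu',\nu''} f^{\mu''}_{\nu'\nu''}(t)\,K_{\la',\nu'}(t)\,K_{\la'',\nu''}(t)\Bigr) P_{\mu''}(y;t).
\end{equation*}
On the other hand, (3.3.2) gives $s_{\la'}s_{\la''} = \sum_{\eta} c^{\eta}_{\la'\la''}\,s_{\eta}$, and expanding each $s_\eta$ via (1.2.2) gives $\sum_{\mu''}\bigl(\sum_{\eta} c^{\eta}_{\la'\la''}K_{\eta,\mu''}(t)\bigr) P_{\mu''}(y;t)$. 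Since $\{P_{\mu''}(y;t)\}$ is a basis of $\vL[t]$, comparing coefficients yields
\begin{equation*}
\sum_{\nu',\nu''} f^{\mu''}_{\nu'\nu''}(t)\,K_{\la',\nu'}(t)\,K_{\la'',\nu''}(t) \;=\; \sum_{\eta} c^{\eta}_{\la'\la''}\,K_{\eta,\mu''}(t).
\end{equation*}
Substituting $t \mapsto t^2$ and multiplying by $t^{|\la'|}$, the right-hand side of (3.4.1) turns into the right-hand side of (3.4.2), which finishes the proof.

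Neither step is really an obstacle: (3.4.1) is bookkeeping from (3.3.5), and (3.4.2) is a direct comparison of two ways of expanding the product $s_{\la'}s_{\la''}$ into the Hall--Littlewood basis. The only point to take care of is the consistent use of $t^2$ versus $t$ (the double Kostka polynomials are controlled by $t^2$ while the Hall--Littlewood/Schur change of basis is the ordinary one at parameter $t^2$), which is automatic once (3.4.1) is in hand.
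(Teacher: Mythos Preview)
Your proposal is correct and follows essentially the same approach as the paper. The paper also notes that (3.4.1) is just (3.3.5), and proves (3.4.2) by expanding $s_{\la'}(y)s_{\la''}(y)$ in the Hall--Littlewood basis in two ways (via $K_{\la',\nu'}K_{\la'',\nu''}$ and (3.3.3) on one hand, and via Littlewood--Richardson and (1.2.2) on the other), comparing coefficients of $P_{\mu''}(y;t)$, and then invoking (3.4.1) with $t\mapsto t^2$.
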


\begin{proof}
The first equality is given in (3.3.5).  We show the second equality. 
One can write 

\begin{align*}
s_{\la'}(y) &= \sum_{\nu'}K_{\la'\nu'}(t)P_{\nu'}(y;t),  \\
s_{\la''}(y) &= \sum_{\nu''}K_{\la'',\nu''}(t)P_{\nu''}(y;t).
\end{align*}
Hence
\begin{align*}
\tag{3.4.3}
s_{\la'}(y)s_{\la''}(y) &= \sum_{\nu', \nu''}
                  K_{\la'\nu'}(t)K_{\la''\nu''}(t)P_{\nu'}(y;t)P_{\nu''}(y;t) \\
              &= \sum_{\nu',\nu''}\sum_{\mu''}f^{\mu''}_{\nu'\nu''}(t)
                     K_{\la'\nu'}(t)K_{\la''\nu''}(t)P_{\mu''}(y;t).
\end{align*}
On the other hand, 

\begin{align*}
\tag{3.4.4}
s_{\la'}(y)s_{\la''}(y) &= \sum_{\eta}c^{\eta}_{\la'\la''}s_{\eta}(y) \\
                        &= \sum_{\eta}c^{\eta}_{\la'\la''}
                             \sum_{\mu''}K_{\eta,\mu''}(t)P_{\mu''}(y;t).
\end{align*}
By comparing (3.4.3) and (3.4.4), we have, for each $\la', \la''$ and $\mu''$,  
\begin{equation*}
\sum_{\eta}c^{\eta}_{\la'\la''}K_{\eta,\mu''}(t) = \sum_{\nu', \nu''}
                    f^{\mu''}_{\nu'\nu''}(t)K_{\la'\nu'}(t)K_{\la''\nu''}(t). 
\end{equation*}
This proves the second equality. 
\end{proof}

\para{3.5.}
For $\la, \mu \in \SP_n$, let $SST(\la,\mu)$ be the set of semistandard tableaux 
of shape $\la$ and weight $\mu$. For a semistandard tableau $S$, the charge 
$c(S)$ is defined as in [M, III, 6]. Then Lascoux-Sch\"utzenberger theorem 
([M, III, (6.5)]) asserts that 

\begin{equation*}
\tag{3.5.1}
K_{\la\mu}(t) = \sum_{S \in SST(\la;\mu)}t^{c(S)}.
\end{equation*}  

In what follows, we shall prove an analogue of (3.5.1) for double Kostka polynomials 
$K_{\Bla, \Bmu}(t)$ for some special cases. 
Let $\Bla = (\la', \la'') \in \SP_{n,2}$.  A pair $T = (T_+, T_-)$ is called 
a semistandard tableau of shape $\Bla$ if $T_+$ (resp. $T_-$) is a semistandard tableau 
of shape $\la'$ (resp. $\la''$) with respect to the letters $1, \dots, n$. 
We denote by $SST(\la)$ the set of semistandard tableaux of shape $\Bla$. 
$T \in SST(\Bla)$ is regarded as a usual semistandard tableau associated to 
a skew diagram; write $\la' = (\la'_1, \la'_2, \dots, \la'_{k'})$ with $\la'_{k'} > 0$, 
and $\la'' = (\la''_1, \la''_2, \dots, \la''_{k''})$ with $\la''_{k''} > 0$. 
Put $a = \la''_1$.
We define a partition $\xi = (\xi_1, \dots, \xi_{k'+k''}) \in \SP_{n + ak'}$ by

\begin{equation*}
\xi_i = \begin{cases}
                \la'_i + a  &\quad\text{ for $1 \le i \le k'$ }, \\
                \la''_{i - k'}        &\quad\text{ for $k'+1 \le i \le k'+k''$. }
        \end{cases} 
\end{equation*}  
We define a partition $\th = (a^{k'})$  of rectangular shape. 
Then $\th \subset \xi$, and the skew diagram  $\xi - \th$ consist of 
connected component of shape $\la'$ and $\la''$. Thus $T \in SST(\Bla)$ 
can be identified with 
a semistandard tableau $\wt T$ of shape $\xi - \th$. 
Assume $\pi \in \SP_n$.  We say that $T \in SST(\la)$ has weight $\pi$ 
if the corresponding tableau $\wt T$ has shape $\xi-\th$ and weight $\pi$. 
We denote by $SST(\Bla, \pi)$ the set of semistandard tableau of shape 
$\Bla$ and weight $\pi$.  
\par
The set $SST(\Bla, \pi)$ is described as follows; for a partition 
$\nu \in \SP_m$ and $\a = (\a_1, \dots, \a_n) \in \BZ^n_{\ge 0}$ such that
$|\a| = \sum_i \a_i = m$, 
let $SST(\nu;\a)$ be the set of semistandard tableau of shape $\nu$ and 
weight $\a$. Then we have

\begin{equation*}
\tag{3.5.2}
SST(\Bla,\pi) = \coprod_{\substack{\a + \b = \pi \\
                                   |\a| = |\la'|}}
                           (SST(\la',\a) \times SST(\la'', \b)).
\end{equation*}  

\remark{3.6.}
Usually, the weight of a semistandard tableau is assumed to be a partition.
Here we need to consider the weight which is not a partition.  But this gives 
no essential difference. In fact, we consider the set $SST(\nu;\a)$. 
$S_n$ acts on $\BZ^n_{\ge 0}$ by a permutation of factors.  
We denote by $O(\a)$ the $S_n$-orbit of 
$\a$ in $\BZ^n_{\ge 0}$.   There exists a unique $\mu \in O(\a)$ 
such that $\mu$ is a partition. Then we have  
$|SST(\nu;\a)| = |SST(\nu; \mu)|$.  
This follows from  (5.12) in [M, I] and the discussion below  (though 
it is not written explicitly).

\para{3.7.}
For (an ordinary) semistandard tableau $S$, a word $w(S)$ is defined 
as a sequence of letters $1, \dots, n$, reading from right to left, 
and top to down. This definition works for the semistandard tableau 
assoicaited to a skew-diagram. 
For a semistandard tableau $T = (T_+, T_-) \in SST(\Bla)$, we define 
the associated word $w(T)$ by $w(T) = w(T_+)w(T_-)$. 
Hence $w(T)$ coinicdes with $w(\wt T)$.
\par 
Following [M, I, 9], we introduce a notion 
of lattice permutation.  A word $w = a_1\cdots a_N$ consisting of letters 
$1, \dots, n$ is called a lattice 
permutation if for $1 \le r \le N$ and $1 \le i \le n-1$, the number of 
occurrences of the letter $i$ in $a_1\cdots a_r$ is $\ge$ the number of 
occurrences of the letter $i+1$.    
We denote by $SST^0(\Bla, \pi)$ the set of semistandard tableau 
$T \in SST(\Bla, \pi)$ such that $w(T)$ is a lattice permutation.

\begin{lem} 
Assume that $\Bla \in \SP_{n,2}$, $\pi \in \SP_n$.  
There exists a bijective map 
\begin{equation*}
\tag{3.8.1}
\vT : SST(\Bla, \pi) \isom \coprod_{\nu \in \SP_n} (SST^0(\Bla, \nu) \times SST(\nu, \pi))
\end{equation*}
\end{lem}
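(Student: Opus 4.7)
The plan is to realize $\vT$ as the classical Littlewood--Richardson bijection applied to the (disconnected) skew shape $\xi - \th$, whose two connected components are $\la'$ and $\la''$. Under the identification (3.5.2), an element of $SST(\Bla, \pi)$ is nothing but a pair $(T_+, T_-)$ of ordinary semistandard tableaux of shapes $\la'$ and $\la''$ whose contents sum to $\pi$. Extracting the coefficient of $y^{\pi}$ from
\[
s_{\la'}(y)\,s_{\la''}(y) \;=\; \sum_{\nu \in \SP_n} c^{\nu}_{\la' \la''}\,s_{\nu}(y),
\]
combined with (3.5.2) and the Kostka-number interpretation $K_{\nu,\pi}(1) = |SST(\nu,\pi)|$, yields $|SST(\Bla,\pi)| = \sum_{\nu} c^{\nu}_{\la'\la''}\,|SST(\nu,\pi)|$. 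The classical Littlewood--Richardson rule, applied to the disconnected skew shape $\xi - \th$, identifies $|SST^0(\Bla,\nu)|$ with $c^{\nu}_{\la'\la''}$, so both sides of (3.8.1) already have the same cardinality; the content of the lemma is to construct a canonical bijection.

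I would define $\vT$ via jeu-de-taquin. Given $T = (T_+, T_-) \in SST(\Bla, \pi)$, regard $T$ as a skew SSYT $\wt T$ of shape $\xi - \th$ with reading word $w(T_+)w(T_-)$, and let $P(T) := \mathrm{rect}(\wt T) \in SST(\nu, \pi)$ be its rectification for the unique $\nu$ so determined. Fix once and for all, for each $\nu$, the Yamanouchi tableau $U_\nu \in SST(\nu, \nu)$ with entries $i$ in row $i$. Let $L(T)$ be the recording tableau of this jeu-de-taquin rectification: the skew tableau of shape $\xi - \th$ obtained by running the rectification slides of $\wt T$ in reverse, starting from $U_\nu$. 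Set $\vT(T) := (L(T), P(T))$. The inverse sends $(L, P) \in SST^0(\Bla,\nu) \times SST(\nu,\pi)$ to the tableau obtained by performing on $P$ the reverse jeu-de-taquin slides encoded by $L$.

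The main obstacle, and essentially the only non-routine step, is verifying that $L(T)$ indeed lies in $SST^0(\Bla, \nu)$: that it is semistandard of content $\nu$ and that its reading word is a lattice permutation. Both are consequences of Sch\"utzenberger's theorem that jeu-de-taquin preserves plactic equivalence classes, together with the classical characterization of Littlewood--Richardson skew tableaux as lattice-word skew tableaux of fixed content; compare [M, I, 9]. Nothing in the argument requires the ambient skew shape to be connected, so it applies verbatim to $\xi - \th$; the two components $\la'$ and $\la''$ interact only through the jeu-de-taquin slides, which is the combinatorial manifestation of the plactic product of the two words $w(T_+)$ and $w(T_-)$.
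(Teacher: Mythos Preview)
Your proposal is correct and takes essentially the same route as the paper: both identify $SST(\Bla,\pi)$ with $SST(\xi-\th,\pi)$ and then invoke the standard Littlewood--Richardson bijection for skew shapes (the paper simply cites [M, I, (9.4)], whereas you spell out the jeu-de-taquin/rectification construction that underlies it). One small remark: your opening cardinality paragraph is unnecessary for the argument, and its appeal to $|SST^0(\Bla,\nu)| = c^{\nu}_{\la'\la''}$ is, within the paper's logical order, a forward reference to Corollary~3.9 (which is \emph{derived} from this lemma); since your actual construction of $\vT$ and its inverse does not rely on that equality, you can safely drop the paragraph.
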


\begin{proof}
Under the correspondence $T \lra \wt T$ in 3.5, the set $SST(\Bla,\pi)$   
can be identified with the set $SST(\xi -\th, \pi)$.   
Then (3.8.1) is a special case of the bijection given in [M, I, (9.4)].
In (9.4), this bijection is explicitly constructed.  
\end{proof}

\begin{cor}  
Assume that $\Bla = (\la', \la'') \in \SP_{n,2}$, $\nu \in \SP_n$.  Then we have
\begin{equation*}
|SST^0(\Bla,\nu)| = c^{\nu}_{\la', \la''}.
\end{equation*}
\end{cor}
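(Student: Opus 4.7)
The plan is to derive the equality from Lemma 3.8 by comparing two Schur-function expressions for a common weight enumerator. For a tableau $T$ of weight $\a = (\a_1, \dots, \a_n)$, write $y^\a = y_1^{\a_1}\cdots y_n^{\a_n}$ and set
\[
F(y) \;:=\; \sum_{T \in SST(\Bla)} y^{\weit(T)}.
\]

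First I would evaluate $F$ directly via (3.5.2): a tableau of shape $\Bla$ is nothing but a pair $(T_+, T_-)$ of ordinary semistandard tableaux of shapes $\la'$ and $\la''$ whose weights sum to $\weit(T)$, so the series factorizes as
\[
F(y) \;=\; \Bigl(\sum_{T_+ \in SST(\la')} y^{\weit(T_+)}\Bigr)\Bigl(\sum_{T_- \in SST(\la'')} y^{\weit(T_-)}\Bigr) \;=\; s_{\la'}(y)\, s_{\la''}(y),
\]
using the standard identity $s_\nu(y) = \sum_{S \in SST(\nu)} y^{\weit(S)}$.

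Next I would apply the bijection $\vT$ of Lemma 3.8. For each $\pi \in \SP_n$, every $T \in SST(\Bla, \pi)$ is sent to a pair $(S, U)$ with $S \in SST^0(\Bla, \nu)$ and $U \in SST(\nu, \pi)$ for some $\nu \in \SP_n$, and this assignment preserves the weight ($\weit(T) = \weit(U) = \pi$). Grouping by $\nu$,
\[
F(y) \;=\; \sum_{\nu \in \SP_n} |SST^0(\Bla, \nu)| \sum_{U \in SST(\nu)} y^{\weit(U)} \;=\; \sum_{\nu \in \SP_n} |SST^0(\Bla, \nu)|\, s_\nu(y).
\]

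Comparing the two evaluations of $F(y)$ with the definition (3.3.2) $s_{\la'}(y)s_{\la''}(y) = \sum_\nu c^{\nu}_{\la'\la''} s_\nu(y)$, and invoking linear independence of the Schur functions in $\vL$, yields $|SST^0(\Bla, \nu)| = c^{\nu}_{\la', \la''}$, as required. I do not foresee any real obstacle: Lemma 3.8 and (3.5.2) together carry all of the combinatorial content, and the rest is a direct generating-series comparison. A purely combinatorial alternative would apply the Littlewood--Richardson rule directly to the skew shape $\xi - \th$ of 3.5, which decomposes into two non-overlapping components of shapes $\la'$ and $\la''$ so that $s_{\xi/\th} = s_{\la'}s_{\la''}$, identifying LR tableaux of shape $\xi-\th$ with content $\nu$ as exactly the elements of $SST^0(\Bla, \nu)$.
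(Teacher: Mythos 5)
Your proof is correct and follows essentially the same route as the paper: both establish $s_{\la'}s_{\la''} = \sum_{\nu}|SST^0(\Bla,\nu)|s_{\nu}$ by combining the factorization $\sum_{T \in SST(\Bla)}y^{\weit(T)} = s_{\la'}(y)s_{\la''}(y)$ with the weight-preserving bijection of Lemma 3.8, and then compare with (3.3.2). The only cosmetic difference is that the paper extracts the coefficients via the scalar product $\lp\,\cdot\,, h_{\pi}\rp$ for $\pi \in \SP_n$, whereas you compare the generating series directly (both ultimately rely on symmetry to pass from partition weights to all weights).
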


\begin{proof}
We prove the corollary by modifying the discussion in [M, I, 9].
By [M, I, (5.12)], we have

\begin{align*}
s_{\la'}(y) &= \sum_{S' \in SST(\la')}y^{S'}, \\
s_{\la''}(y) &= \sum_{S'' \in SST(\la'')}y^{S''}.
\end{align*}
It follows that 
\begin{equation*}
s_{\la'}(y)s_{\la''}(y) = \sum_{T \in SST(\Bla)}y^{T}.
\end{equation*}
By a similar argument as in the proof of (5.14) in [M, I], we have 
\begin{equation*}
|SST(\Bla,\pi)| = \lp s_{\la'}s_{\la''}, h_{\pi}\rp, 
\end{equation*}
where $h_{\pi}$ is a complete symmetric function assoicated to 
$\pi$. 
Similarly, we have $|SST(\nu,\pi)| = \lp s_{\nu}, h_{\pi}\rp$. 
Then by (3.8.1), we have
\begin{equation*}
\lp s_{\la'}s_{\la''}, h_{\pi}\rp = \sum_{\nu \in \SP_n}|SST^0(\Bla,\nu)|
         \lp s_{\nu}, h_{\pi}\rp 
\end{equation*}
for any $\pi \in \SP_n$.  It follows that 
\begin{equation*}
\tag{3.9.1}
s_{\la'}s_{\la''} = \sum_{\nu \in \SP_n}|SST^0(\Bla,\nu)|s_{\nu}.
\end{equation*}
On the other hand, by (3.3.2) we have
\begin{equation*}
\tag{3.9.2}
s_{\la'}s_{\la''} = \sum_{\nu\in \SP_n}c^{\nu}_{\la', \la''}s_{\nu}.
\end{equation*}
By comparing the coefficient of $s_{\nu}$ in (3.9.1) with (3.9.2), 
we obtain the result. 
\end{proof}

\remark{3.10.}
The Littlewood-Richardson rule is a combinatorial procedure of computing 
Littlewood-Richardson coefficients.  In [M, I, (9.2)]
it is stated that $c^{\nu}_{\la',\la''}$ is equal to the number of smsitandard 
tableaux $T$ of shape $\nu-\la'$ and weight $\la''$ such that $w(T)$ is a lattice
permutation. Hence 
Corollary 3.9 gives a variant of the Littlewood-Richardson rule.

\para{3.11.}
Assume that $\Bla \in \SP_{n,2}$ and $\mu'' \in \SP_n$.  
For $T \in SST(\Bla,\mu'')$, write $\vT(T) = (D, S)$, with $S \in SST(\nu, \mu'')$ 
for some $\nu$.  We define a charge $c(T)$ of $T$ by $c(T) = c(S)$, where 
$c(S)$ is the charge of $S$ as in (3.5.1).  The following formula is an analogue of 
Lascoux-Sch\"utzenberger thorem  for the double Kostka polynomial $K_{\Bla, \Bmu}(t)$
in the case where $\Bmu = (-, \mu'')$.

\begin{thm}  
Let $\Bla, \Bmu \in \SP_{n,2}$, and assume that $\Bmu = (-,\mu'')$.  Then 

\begin{equation*}
K_{\Bla,\Bmu}(t) = t^{|\la'|}\sum_{T \in SST(\Bla, \mu'')}t^{2c(T)}.
\end{equation*} 
\end{thm}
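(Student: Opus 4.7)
The plan is to assemble the result by chaining together the three main tools developed in the section: the ``splitting formula'' in Lemma 3.4 that reduces a double Kostka polynomial with $\Bmu = (-,\mu'')$ to ordinary Kostka polynomials, the classical Lascoux--Sch\"utzenberger formula (3.5.1), and the tableau bijection $\vT$ of Lemma 3.8 (together with Corollary 3.9 which identifies $c^{\eta}_{\la',\la''}$ combinatorially). Since the charge $c(T)$ has been \emph{defined} via $\vT$ precisely so that $c(T) = c(S)$ when $\vT(T) = (D,S)$, the proof should be a clean substitution, not a calculation.

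More specifically, I would start from (3.4.2):
\begin{equation*}
K_{\Bla,\Bmu}(t) = t^{|\la'|}\sum_{\eta \in \SP_n} c^{\eta}_{\la'\la''} K_{\eta,\mu''}(t^2).
\end{equation*}
Applying the Lascoux--Sch\"utzenberger formula (3.5.1) with $t$ replaced by $t^2$ to each ordinary Kostka polynomial on the right gives
\begin{equation*}
K_{\Bla,\Bmu}(t) = t^{|\la'|}\sum_{\eta \in \SP_n} c^{\eta}_{\la'\la''} \sum_{S \in SST(\eta,\mu'')} t^{2c(S)}.
\end{equation*}
Then by Corollary 3.9, $c^{\eta}_{\la'\la''} = |SST^0(\Bla,\eta)|$, so the double sum can be rewritten as a triple sum indexed by $(\eta, D, S)$ with $D \in SST^0(\Bla,\eta)$ and $S \in SST(\eta,\mu'')$:
\begin{equation*}
K_{\Bla,\Bmu}(t) = t^{|\la'|}\sum_{\eta \in \SP_n}\sum_{D \in SST^0(\Bla,\eta)}\sum_{S \in SST(\eta,\mu'')} t^{2c(S)}.
\end{equation*}

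The bijection $\vT$ of Lemma 3.8 sends each $T \in SST(\Bla,\mu'')$ to a pair $(D,S) \in SST^0(\Bla,\eta) \times SST(\eta,\mu'')$ for a unique $\eta$, so the triple sum collapses to a single sum over $T \in SST(\Bla,\mu'')$. Since the definition of $c(T)$ in 3.11 is exactly $c(S)$ for the second component of $\vT(T)$, the exponent $2c(S)$ becomes $2c(T)$, yielding
\begin{equation*}
K_{\Bla,\Bmu}(t) = t^{|\la'|}\sum_{T \in SST(\Bla,\mu'')} t^{2c(T)},
\end{equation*}
as claimed.

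There is essentially no obstacle in this plan once the earlier machinery is accepted: the ``hard'' content has already been absorbed into Lemma 3.4 (the algebraic identity involving $f^{\mu''}_{\nu'\nu''}$ and $c^{\eta}_{\la'\la''}$), Corollary 3.9 (the Littlewood--Richardson interpretation of lattice tableaux of skew shape $\xi - \th$), and the classical Lascoux--Sch\"utzenberger theorem. The only point deserving a brief remark is that the charge statistic on $T$ is well-defined because $\vT$ is a bijection, so $(D,S)$ and thus $c(S)$ depend only on $T$; this is already built into the definition in 3.11.
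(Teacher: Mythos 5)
Your proposal is correct and is essentially the paper's own argument: the paper defines the composite map $T \mapsto S$ (via $\vT$), computes $\sum_{T}t^{c(T)} = \sum_{\nu}c^{\nu}_{\la'\la''}K_{\nu,\mu''}(t)$ using Corollary 3.9 and (3.5.1), and then invokes (3.4.2), which is exactly your chain of substitutions run in the opposite direction. No gaps.
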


\begin{proof}
We define a map $\Psi : SST(\Bla,\mu'') \to \coprod_{\nu \in \SP_n}SST(\nu, \mu'')$ 
by $T \mapsto S$, where $\vT(T) = (D, S)$. Then by Corollary 3.9, for each 
$S \in SST(\nu,\mu'')$, the set $\Psi\iv(S)$ has the cardinality $c^{\nu}_{\la'\la''}$, 
and, by definition,  any element $T \in \Psi\iv(S)$ has the charge $c(T) = c(S)$.  Hence

\begin{align*}
\sum_{T \in SST(\Bla,\mu'')}t^{c(T)} 
       &= \sum_{\nu \in \SP_n}\sum_{S \in SST(\nu,\mu'')}c^{\nu}_{\la'\la''}t^{c(S)}  \\
       &= \sum_{\nu \in \SP_n}c^{\nu}_{\la'\la''}K_{\nu,\mu''}(t)
\end{align*}
since $K_{\nu, \mu''}(t) = \sum_St^{c(S)}$ by (3.5.1). 
Now the theorem follows from  (3.4.2).  
\end{proof}

\begin{cor}  
Assume that $\Bla, \Bmu \in \SP_{n,2}$ with $\Bmu = (-,\mu'')$.  Then we have

\begin{equation*}
K_{\Bla,\Bmu}(1) = |SST(\Bla,\mu'')|.
\end{equation*}
\end{cor}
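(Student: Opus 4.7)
The plan is to specialize the Lascoux-Sch\"utzenberger type formula of Theorem 3.12 at $t = 1$. Since $t^{|\la'|} = 1$ and $t^{2c(T)} = 1$ when $t = 1$, the right-hand side collapses to $\sum_{T \in SST(\Bla, \mu'')} 1 = |SST(\Bla, \mu'')|$, yielding the claim immediately. No further combinatorial argument is needed; all substantial content has been absorbed into Theorem 3.12.

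As an alternative route that bypasses Theorem 3.12, one could instead start from (3.4.2), namely
\begin{equation*}
K_{\Bla, \Bmu}(t) = t^{|\la'|} \sum_{\eta} c^{\eta}_{\la' \la''} K_{\eta, \mu''}(t^2),
\end{equation*}
evaluate at $t = 1$, and invoke the classical identity $K_{\eta, \mu''}(1) = |SST(\eta, \mu'')|$ for ordinary Kostka numbers. This reduces the corollary to the purely combinatorial statement
\begin{equation*}
|SST(\Bla, \mu'')| = \sum_{\eta} c^{\eta}_{\la' \la''} |SST(\eta, \mu'')|,
\end{equation*}
which in turn follows from the bijection $\vT$ of Lemma 3.8 applied with $\pi = \mu''$, together with the identity $|SST^0(\Bla, \nu)| = c^{\nu}_{\la', \la''}$ of Corollary 3.9. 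There is no genuine obstacle in either approach; the hard work was done upstream.
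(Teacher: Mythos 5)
Your first argument is exactly the paper's (implicit) proof: Corollary 3.13 is stated as an immediate specialization of Theorem 3.12 at $t=1$, with no further work needed. Your alternative route via (3.4.2), Lemma 3.8 and Corollary 3.9 is also valid, but it merely re-traverses the ingredients already used to prove Theorem 3.12, so it is not a genuinely different approach.
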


\para{3.14.}
Here we recall the explicit  
construction of $\x^{\Bla}$ for $\Bla = (\la',\la'') \in \SP_{n,2}$. 
Put $|\la'| = m', |\la''| = m''$.  Let 
$\x^{\la'}$ (resp. $\x^{\la''}$) be the irreducible character of $S_{m'}$ 
(resp. $S_{m''}$) corresponding to the partition $\la' \in \SP_{m'}$
(resp. $\la'' \in \SP_{m''}$).
We denote by $\wt \x^{\la'}$ the irreducible character of 
$W_{m'} = S_{m'} \ltimes (\BZ/2\BZ)^{m'}$ obtained by extending $\x^{\la'}$ 
by the trivial action of $(\BZ/2\BZ)^{m'}$. We also denote by $\wt \x^{\la''}$
the irreducible character of $W_{m''} = S_{m''}\ltimes (\BZ/2\BZ)^{m''}$ 
by extending $\x^{\la''}$ by defining the action of $(\BZ/2\BZ)^{m''}$ so that
each factor $\BZ/2\BZ$ acts non-trivially. Then 
$\Ind_{W_{m'} \times W_{m''}}^{W_n}\wt \x^{\Bla'} \otimes \wt \x^{\Bla''}$ 
gives an irreducible character $\x^{\Bla}$. 
It follows from the construction that $\x^{\Bla}|_{S_n}$ coincides with 
$\Ind_{S_{m'} \times S_{m''}}^{S_n}\x^{\la'}\otimes \x^{\la''}$. 
\par 
For $\nu = (\nu_1, \dots, \nu_k) \in \SP_n$, we denote by $S_{\nu}$ the Young subgroup
$S_{\nu_1} \times \cdots \times S_{\nu_k}$. 
We show the following formula.

\begin{prop}  
Let $\Bla, \Bmu \in \SP_{n,2}$ with $\Bmu = (-, \mu'')$.  Then we have
\begin{equation*}
\tag{3.15.1}
K_{\Bla, \Bmu}(1) = \lp \Ind_{S_{\mu''}}^{W_n}1,\x^{\Bla} \rp_{\,W_n}.
\end{equation*}
\end{prop}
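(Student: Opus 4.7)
The plan is to combine Corollary 3.13 with a character-theoretic computation. By Corollary 3.13 it suffices to prove
\[
|SST(\Bla,\mu'')| = \lp \Ind_{S_{\mu''}}^{W_n}1,\x^{\Bla} \rp_{\,W_n}.
\]
First I would handle the right-hand side. Write the induction in two stages: $\Ind_{S_{\mu''}}^{W_n}1 \simeq \Ind_{S_n}^{W_n}\Ind_{S_{\mu''}}^{S_n}1$. By Frobenius reciprocity,
\[
\lp \Ind_{S_{\mu''}}^{W_n}1,\x^{\Bla}\rp_{\,W_n} = \lp \Ind_{S_{\mu''}}^{S_n}1, \x^{\Bla}|_{S_n}\rp_{\,S_n}.
\]
Now invoke the two classical ingredients already at hand: the expansion $\Ind_{S_{\mu''}}^{S_n}1 = \sum_{\nu \in \SP_n} K_{\nu,\mu''}\x^{\nu}$ with $K_{\nu,\mu''} = |SST(\nu,\mu'')|$, and the description of $\x^{\Bla}|_{S_n}$ from 3.14 as $\Ind_{S_{m'}\times S_{m''}}^{S_n}\x^{\la'}\otimes\x^{\la''}$, which by (3.3.2) (equivalently the Littlewood--Richardson rule) decomposes as $\sum_{\nu}c^{\nu}_{\la'\la''}\x^{\nu}$. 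Taking the inner product yields
\[
\lp \Ind_{S_{\mu''}}^{W_n}1,\x^{\Bla}\rp_{\,W_n} = \sum_{\nu \in \SP_n} c^{\nu}_{\la'\la''} K_{\nu,\mu''}.
\]

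Next I would handle the left-hand side combinatorially. By Lemma 3.8 the map $\vT$ gives
\[
|SST(\Bla,\mu'')| = \sum_{\nu \in \SP_n}|SST^0(\Bla,\nu)|\cdot|SST(\nu,\mu'')|,
\]
and Corollary 3.9 identifies $|SST^0(\Bla,\nu)|$ with $c^{\nu}_{\la'\la''}$, while $|SST(\nu,\mu'')| = K_{\nu,\mu''}$. Hence
\[
|SST(\Bla,\mu'')| = \sum_{\nu \in \SP_n} c^{\nu}_{\la'\la''} K_{\nu,\mu''},
\]
which matches the previous expression and proves (3.15.1).

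There is no serious obstacle: the argument is a short bookkeeping exercise once one notices that Lemma 3.8/Corollary 3.9 on the combinatorial side mirrors exactly the two-step Frobenius reciprocity on the representation-theoretic side, with Kostka numbers recording the $S_{\mu''}$-induction and Littlewood--Richardson coefficients recording the restriction from $W_n$ to $S_n$. The only mild subtlety is remembering that the $(\BZ/2\BZ)^n$-factors play no role because we induce from $S_{\mu''} \subset S_n \subset W_n$ with trivial character, so all the relevant computations reduce to symmetric-group characters via 3.14.
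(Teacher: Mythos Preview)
Your argument is correct, and the representation-theoretic half (two-step Frobenius reciprocity, the description of $\x^{\Bla}|_{S_n}$ from 3.14, and the Littlewood--Richardson expansion) is exactly what the paper does. The only difference is on the other side: the paper invokes (3.4.2) directly at $t=1$ to obtain
\[
K_{\Bla,\Bmu}(1)=\sum_{\nu}c^{\nu}_{\la'\la''}K_{\nu,\mu''}(1),
\]
whereas you reach the same identity by passing through Corollary 3.13 and then re-expanding $|SST(\Bla,\mu'')|$ via Lemma 3.8 and Corollary 3.9. Since Corollary 3.13 itself rests on Theorem 3.12, which in turn uses (3.4.2) together with Lemma 3.8 and Corollary 3.9, your route is a small detour that unwinds and then rewinds the same combinatorics; the paper's one-line appeal to (3.4.2) is more economical, but nothing is lost or gained mathematically.
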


\begin{proof}
Under the notation in 3.14, we compute the inner product.
\begin{align*}
\lp \Ind_{S_{\mu''}}^{W_n}1, \x^{\Bla} \rp_{\, W_n} 
       &= \lp \Ind_{S_{\mu''}}^{S_n}1, \x^{\Bla}|_{S_n} \rp _{\, S_n} \\
       &= \lp \Ind_{S_{\mu''}}^{S_n}1, \Ind_{S_{m'} 
            \times S_{m''}}^{S_n}\x^{\la'}\otimes \x^{\la''} \rp_{\, S_n}.
\end{align*}
Here we can write 
$\Ind_{S_{m'}\times S_{m''}}^{S_n}\x^{\la'}\otimes \x^{\la''}
            = \sum_{\nu \in \SP_n}c^{\nu}_{\la'\la''}\x^{\nu}$ 
by using the Littlewood-Richardson coefficients.
Thus

\begin{equation*}
\lp \Ind_{S_{\mu''}}^{W_n}1, \x^{\Bla} \rp_{\,W_n} 
             = \sum_{\nu \in \SP_n}c^{\nu}_{\la'\la''}
                 \lp \Ind_{S_{\mu''}}^{S_n}1, \x^{\nu}\rp_{\, S_n}. 
\end{equation*} 
But it is known that $\lp \Ind_{S_{\mu''}}^{S_n}1, \x^{\nu}\rp_{\, S_n} 
              = K_{\nu, \mu''}(1)$ (see eg. [M, I, Remark after (7.8)]).
Hence we have

\begin{equation*}
\lp \Ind_{S_{\mu''}}^{W_n}1, \x^{\Bla} \rp_{\, W_n} 
                 = \sum_{\nu \in \SP_n}c^{\nu}_{\la'\la''}K_{\nu, \mu''}(1).
\end{equation*}
Then the proposition follows from (3.4.2), by substituting 
$t = 1$. 
\end{proof}

\begin{cor}  
Let $\Bmu = (-; \mu'')$.  Then  for $z \in \SO_{\Bmu}$, we have 
\begin{equation*}
\tag{3.16.1}
\bigoplus_{i \ge 0}H^{2i}(\SB_z, \Ql) \simeq \Ind_{S_{\mu''}}^{W_n}1
\end{equation*}
as $W_n$-modules.
\end{cor}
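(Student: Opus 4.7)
The plan is to compare multiplicities of irreducibles on both sides and conclude via the character theory of finite groups. By Proposition 2.14, for $z \in \SO_{\Bmu}$ with $\Bmu = (-;\mu'')$, we have
$$
\wt K_{\Bla, \Bmu}(t) = \sum_{i \ge 0}\lp H^{2i}(\SB_z, \Ql), V_{\Bla}\rp_{\,W_n}\,t^i,
$$
so the multiplicity of $V_{\Bla}$ in $\bigoplus_{i \ge 0}H^{2i}(\SB_z, \Ql)$ equals $\wt K_{\Bla,\Bmu}(1)$.

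Next I would observe that $\wt K_{\Bla,\Bmu}(1) = K_{\Bla,\Bmu}(1)$: by definition $\wt K_{\Bla,\Bmu}(t) = t^{a(\Bmu)}K_{\Bla,\Bmu}(t\iv)$, so evaluating at $t = 1$ gives this equality directly. Combining this with Proposition 3.15, I obtain
$$
\sum_{i \ge 0}\lp H^{2i}(\SB_z, \Ql), V_{\Bla}\rp_{\,W_n} = K_{\Bla,\Bmu}(1) = \lp \Ind_{S_{\mu''}}^{W_n}1, \x^{\Bla}\rp_{\,W_n}.
$$

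Finally, since the multiplicity of each irreducible character $\x^{\Bla}$ (for $\Bla$ ranging over $\SP_{n,2}$) coincides on the two sides, and characters determine finite-dimensional representations up to isomorphism, the claimed isomorphism of $W_n$-modules (3.16.1) follows.

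The argument is essentially a bookkeeping exercise combining two earlier results, so there is no real obstacle; the only point that requires care is the compatibility of the specialization $t = 1$ between $K_{\Bla,\Bmu}(t)$ and $\wt K_{\Bla,\Bmu}(t)$, and the fact that $S_{\mu''} \subset S_n$ embeds into $W_n = S_n \ltimes (\BZ/2\BZ)^n$ in the natural way so that the induced character on the right of (3.16.1) is well-defined.
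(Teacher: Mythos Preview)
Your proof is correct and follows essentially the same argument as the paper: both evaluate Proposition~2.14 at $t=1$ to obtain the multiplicity of $V_{\Bla}$ in $\bigoplus_i H^{2i}(\SB_z,\Ql)$, compare with Proposition~3.15, and conclude by matching irreducible multiplicities. You are slightly more explicit than the paper in noting the identity $\wt K_{\Bla,\Bmu}(1)=K_{\Bla,\Bmu}(1)$, which the paper uses tacitly.
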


\begin{proof}
Put $H^*(\SB_z) = \bigoplus_{i \ge 0}H^{2i}(\SB_z, \Ql)$. 
Then Proposition 2.14 shows that 
\begin{equation*}
K_{\Bla,\Bmu}(1) = \lp H^*(\SB_z), \x^{\Bla}\rp_{\, W_n}
\end{equation*}
for any $\Bla \in \SP_{n,2}$. 
Thus, by comparing it with (3.15.1), we obtain the required 
formula.
\end{proof}

\remark{3.17.}
It would be interesting to compare (3.16.1) with a similar formula for 
the ordinary Springer representations of type $C_n$. We follow the 
setting in 2.11.  For $x \in \Fg\nil^{\th}$, we define 

\begin{equation*}
\SB^{\star}_x = \{ gB^{\th} \in \SB \mid  g\iv x \in \Lie B^{\th} \}. 
\end{equation*} 
$\SB_x^{\star}$ is the original Springer fibre associated to $x \in \Fg^{\th}\nil$, 
and the cohomology group $H^i(\SB^{\star}_x, \Ql)$ has a natural action of $W_n$. 
It is known that $H^i(\SB_x^{\star}, \Ql) = 0$ for odd $i$.  Let $\Fl^{\th}$ be a
Levi subalgebra of a parabolic subalgebra of $\Fg^{\th}$ of type 
$A_{\mu''_1-1} + A_{\mu''_2-1} + \cdots + A_{\mu''_k-1}$ for 
$\mu'' = (\mu''_1, \mu''_2, \dots, \mu''_k) \in \SP_n$. Assume that $x$ is a regualr nilpotent 
element in $\Fl^{\th}\nil$. Then by a general formula due to [L2], we have 
\begin{equation*}
\tag{3.17.1}
\bigoplus_{i \ge 0}H^{2i}(\SB^{\star}_x, \Ql) \simeq \Ind_{S_{\mu''}}^{W_n}1
\end{equation*}
as $W_n$-modules.  However, the graded $W_n$-module structures in (3.16.1) and 
(3.17.1) do not coincide in general. For example, assume that $n = 2$, and 
$\Bmu = (-;2)$, i.e., $\mu'' = (2)$.  In that case, 
$\Ind_{S_{\mu''}}^{W_2}1 = \Ind_{S_2}^{W_2}1 =  \x^{(-;2)} + \x^{(1;1)} + \x^{(2;-)}$. 
We have

\begin{align*}
H^4(\SB_z,\Ql) &= \x^{(-;2)}, \quad H^2(\SB_z, \Ql) = \x^{(1;1)}, 
                             \quad H^0(\SB_z, \Ql) = \x^{(2;-)}, \\
H^2(\SB^{\star}_x, \Ql) &= \x^{(-;2)} + \x^{(1;1)}, \quad 
                H^0(\SB^{\star}_x, \Ql) = \x^{(2;-)}.
\end{align*}

\para{3.18.}
We shall give an interpretation of the formula (3.2.1) in terms of the 
Springer modules. 
Let $A_n = (\BZ/2\BZ)^n$ be the abelian subgroup of $W_n$. 
We denote by $t_1, \dots, t_n$ the generators of $A_n$, where $t_i$ is 
the generator of the $i$-th component $\BZ/2\BZ$. 
Let $\vf$ be a linear character of $A_n$.  For each $A_n$-module $X$, we 
denote by $X_{\vf}$ the weight space of $X$ corresponding to $\vf$, namely
$X_{\vf} = \{ v \in X \mid av = \vf(a)v \text{ for } a \in A_n\}$. 
Let $S_{\vf}$ be the stabilizer of $\vf$ in $S_n$ under the action of $S_n$ 
on $A_n$. Then $S_{\vf} \simeq S_m \times S_{n-m}$, where $m$ is the number of 
$i$ such that $\vf(t_i) = 1$. If $X$ is an $W_n$-module, $X$ is an $A_n$-module
by restriction.  Then $X_{\vf}$ turns out to be an $S_{\vf}$-module. 
\par
The $W_n$-module $H^i(\SB_z, \Ql)$, which is called the (exotic) Springer modue, 
 is isomorphic to each other for 
$z \in \SO_{\Bmu}$ ($\Bmu \in \SP_{n,2}$).
In the discussion below, we denote it simply by $H^i(\SB_{\Bmu})$.  
Let $\SB^0 = G_0/B_0$ be the flag variety of $G_0 = GL_n$, where $B_0$ is a Borel subgroup of 
$G_0$. Recall that for each nilpotent element $x \in \Fg\Fl_n$, the Springer fibre 
$\SB^0_x$ is defined as

\begin{equation*}
\SB^0_x = \{ gB_0 \in \SB^0 \mid g\iv x \in \Lie B_0 \},
\end{equation*} 
and the cohomology group $H^i(\SB^0_x, \Ql)$ has a natural strucutre of $S_n$-module, 
the Springer module.
Since the $S_n$-module strucutre does not depend on $x \in \SO_{\nu}$ ($\nu \in \SP_n$), 
we denote it by $H^i(\SB^0_{\nu})$. 
Let $A_n\wg$ be the set of irreducible characters of $A_n$.  Then we have the weight space
decomposition 

\begin{equation*}
H^i(\SB_{\Bmu}) = \bigoplus_{\vf \in A_n\wg}H^i(\SB_{\Bmu})_{\vf},
\end{equation*}
where each $H^i(\SB_{\Bmu})_{\vf}$ has a structure of $S_{\vf}$-module.
\par
Recall the polynomial $g^{\Bmu}_{\Bnu}(t) \in \BZ[t]$ for $\Bmu, \Bnu \in \SP_{n,2}$
given Proposition 3.2.  We write it as 

\begin{equation*}
g^{\Bmu}_{\Bnu}(t) = \sum_{\ell \ge 0}g^{\Bmu}_{\Bnu, \ell}t^{\ell}
\end{equation*}
with (possibly negative) integers $g^{\Bmu}_{\Bnu,\ell}$. 
The following proposition gives a description of $H^i(\SB_{\Bmu})_{\vf}$ 
in terms of the Springer modules of $S_{\vf}$.  

\begin{prop}  
Assume that $\Bmu \in \SP_{n,2}$.  Let $\vf \in A_n\wg$ be such that 
$S_{\vf} \simeq S_m \times S_{n-m}$.  Then the following 
equality holds (in the Grothendieck group of the category of $S_{\vf}$-modules)

\begin{equation*}
H^{2i}(\SB_{\Bmu})_{\vf} = \sum_{\substack{\Bnu = (\nu', \nu'') \in \SP_{n,2} \\
                                   |\nu'| = m}}
              \sum_{j,k,\ell}g^{\Bmu}_{\Bnu, \ell}\bigl(H^{2j}(\SB^0_{\nu'})
                            \otimes H^{2k}(\SB^0_{\nu''})\bigr),
\end{equation*}
where the second sum is taken over all $j, k, \ell$ satisfying the condition 
\begin{equation*}
i = (n-m) + 2\ell + 2(j + k).
\end{equation*}
\end{prop}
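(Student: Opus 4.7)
The plan is to reduce the identity via Clifford theory to a statement about multiplicities in $H^{2i}(\SB_\Bmu)$ as a $W_n$-module, convert those multiplicities into coefficients of $\wt K_{\Bla,\Bmu}(t)$ using Proposition~2.14, and then expand using formula~(3.2.1) together with the classical Springer formula for $GL_n$.

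Fix the representative $\vf \in A_n\wg$ with $\vf(t_i) = 1$ for $1 \le i \le m$ and $\vf(t_i) = -1$ for $m < i \le n$; any other character with $S_\vf \simeq S_m \times S_{n-m}$ lies in the same $S_n$-orbit and produces an isomorphic $S_\vf$-module. Then $A_n S_\vf = (A_m \rtimes S_m) \times (A_{n-m} \rtimes S_{n-m}) = W_m \times W_{n-m}$, and for partitions $\la', \la''$ with $|\la'| = m$ and $|\la''| = n-m$, the extension of $\vf$ from $A_n$ to $A_n S_\vf$ tensored with the $S_\vf$-character $\x^{\la'} \otimes \x^{\la''}$ coincides with $\wt\x^{\la'} \otimes \wt\x^{\la''}$ of 3.14 (trivial $A_m$-action on the first factor, each $\BZ/2\BZ$ of $A_{n-m}$ acting by $-1$ on the second). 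Combined with the inducing description $\x^\Bla = \Ind_{W_m \times W_{n-m}}^{W_n}(\wt\x^{\la'} \otimes \wt\x^{\la''})$ from 3.14, Frobenius reciprocity then yields
$$\lp \x^{\la'} \otimes \x^{\la''},\, H^{2i}(\SB_\Bmu)_\vf \rp_{\,S_\vf}
  = \lp \x^\Bla,\, H^{2i}(\SB_\Bmu) \rp_{\,W_n},$$
which by Proposition~2.14 equals the coefficient of $t^i$ in $\wt K_{\Bla,\Bmu}(t)$, where $\Bla = (\la', \la'')$.

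Now I substitute (3.2.1). Since $a(\Bla) - 2n(\Bla) = |\la''| = n-m$,
$$\wt K_{\Bla, \Bmu}(t) = t^{n-m} \sum_\Bnu g^\Bmu_\Bnu(t^2)\,
   \wt K_{\la',\nu'}(t^2)\, \wt K_{\la'',\nu''}(t^2),$$
the sum being over $\Bnu = (\nu', \nu'')$ with $|\nu'| = m$ (the vanishing of ordinary Kostka polynomials automatically enforces the inequalities $\nu' \le \la'$, $\nu'' \le \la''$ appearing in (3.2.1)). Writing $g^\Bmu_\Bnu(t^2) = \sum_\ell g^\Bmu_{\Bnu,\ell}\, t^{2\ell}$ and invoking the classical Springer/Green formula $\wt K_{\la',\nu'}(t) = \sum_j \lp H^{2j}(\SB^0_{\nu'}), \x^{\la'} \rp_{\,S_m}\, t^j$ for $GL_n$ (and its analogue for $\la'', \nu''$), the coefficient of $t^i$ in $\wt K_{\Bla,\Bmu}(t)$ becomes
$$\sum_{\substack{\Bnu,\,j,\,k,\,\ell \\ i = (n-m)+2\ell+2(j+k)}} g^\Bmu_{\Bnu,\ell}\,
   \lp H^{2j}(\SB^0_{\nu'}), \x^{\la'} \rp_{\,S_m}
   \lp H^{2k}(\SB^0_{\nu''}), \x^{\la''} \rp_{\,S_{n-m}}.$$
Reassembling $H^{2i}(\SB_\Bmu)_\vf = \sum_{\la', \la''}(\text{coeff of }t^i)\cdot \x^{\la'} \otimes \x^{\la''}$ and collapsing the two inner multiplicity sums back into $H^{2j}(\SB^0_{\nu'}) \otimes H^{2k}(\SB^0_{\nu''})$ yields the asserted decomposition. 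The only step requiring genuine care is the Clifford-theoretic identification in the previous paragraph: one must verify that the specific extensions defining $\wt\x^{\la'}$ and $\wt\x^{\la''}$ in 3.14 match precisely the $\vf$-twisted extension of $\x^{\la'} \otimes \x^{\la''}$ for the chosen orbit representative. Once that compatibility is set up, the remainder is a routine manipulation of generating series.
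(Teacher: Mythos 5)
Your proposal is correct and follows essentially the same route as the paper: both arguments rest on Proposition 2.14, the identification of the $\vf$-weight space of $V_{\Bla}$ with $V_{\la'}\otimes V_{\la''}$ (which you justify via Clifford theory/Frobenius reciprocity where the paper cites the construction in 3.14), the substitution of (3.2.1), and the classical Springer formula for $GL_m$ and $GL_{n-m}$. The only difference is presentational — you track multiplicities of individual irreducibles $\x^{\la'}\otimes\x^{\la''}$ while the paper manipulates the whole generating series in the Grothendieck group — and this does not change the substance of the argument.
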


\begin{proof}
By Proposition 2.14, one can write (as an identity in the Grothendieck group of the category 
of $S_{\vf}$-modules, extended by scalar to $\BZ[t]$)

\begin{equation*}
\tag{3.19.1}
\sum_{i \ge 0}H^{2i}(\SB_{\Bmu})_{\vf} t^i \simeq  
       \sum_{\Bla \in \SP_{n,2}}\wt K_{\Bla, \Bmu}(t)(V_{\Bla})_{\vf} 
\end{equation*}
for each $\vf \in A_n\wg$. 
Assume that $S_{\vf} \simeq S_m \times S_{n-m}$.
It follows from the explicit construction of $V_{\Bla}$ in 3.14 that 
$(V_{\Bla})_{\vf} = 0$ unless $|\la'| = m, |\la''| = n-m$, and in that 
case, $(V_{\Bla})_{\vf} \simeq V_{\la'}\otimes V_{\la''}$ as 
$S_m \times S_{n-m}$-modules, where $V_{\la'}$ denotes the irreducible $S_m$-module
corresponding to $\x^{\la'}$, and similarly for $V_{\la''}$.
By (3.2.1), the right hand side of (3.19.1) can be written as 

\begin{align*}
&t^{n-m}\sum_{\substack{\la' \in \SP_m \\
                \la'' \in \SP_{n-m}}}\sum_{\Bnu = (\nu', \nu'') \in \SP_{n,2}}g^{\Bmu}_{\Bnu}(t^2)
        \wt K_{\la',\nu'}(t^2)\wt K_{\la'', \nu''}(t^2)V_{\la'}\otimes V_{\la''}  \\
   &= t^{n-m}\sum_{\Bnu}g^{\Bmu}_{\Bnu}(t^2)
           \biggl(\sum_{\la' \in \SP_m}\wt K_{\la',\nu'}(t^2)V_{\la'}\biggr)
              \otimes \biggl(\sum_{\la'' \in \SP_{n-m}}\wt K_{\la'',\nu''}(t^2)V_{\la''}\biggr)  \\
   &= t^{n-m}\sum_{\Bnu}g^{\Bmu}_{\Bnu}(t^2)
             \biggl(\sum_{i \ge 0}H^{2i}(\SB^0_{\nu'})t^{2i}\biggr) \otimes 
             \biggl(\sum_{i \ge 0}H^{2i}(\SB^0_{\nu''})t^{2i}\biggr),   
\end{align*}   
where the last equality follows from the formulas analogous to Proposition 2.14 in the case 
of $GL_m$ and $GL_{n-m}$. 
By comparing the last expression with the left hand side of (3.19.1), we obtain 
the proposition. 
\end{proof}

\para{3.20.}
We consider $\vf \in A_n\wg$ in the special case where $m = n$ or $m = 0$.
Put $\vf = \vf_+$ (resp. $\vf = \vf_-$) if $m = n$ (resp. $m = 0$). 
In these cases, $S_{\vf} \simeq S_n$, and we have a more precise description 
of the $S_n$-module $H^i(\SB_{\Bmu})_{\vf}$ as follows.
(Note that $H^i(\SB_{\Bmu})_{\vf_+}$ coincides with the $A_n$-fixed point subspace of
$H^i(\SB_{\Bmu})$.  The formula (i) in the corollary should be compared with the result
in [SSr], where the case of ordinary Springer representations of type $C_n$ is discussed.)  

\begin{cor}  
Assume that $\Bmu = (\mu', \mu'') \in \SP_{n,2}$.                                                   
\begin{enumerate}                                                                                   
\item                                                                                               
There exists an isomorphism of $S_n$-modules                                                        
                                                                                                    
\begin{equation*}                                                                                   
H^{2i}(\SB_{\Bmu})_{\vf_+} \simeq \begin{cases}                                                           
                            H^{i}(\SB^0_{\mu' + \mu''})                                             
                                &\quad\text{ if $i$ is even,}\\                                     
                            0   &\quad\text{ otherwise. }                                           
                         \end{cases}                                                                
\end{equation*}           
\item                                                                                               
$H^{2i}(\SB_{\Bmu})_{\vf_-} = 0$ unless $\Bmu = (-;\mu'')$.  Assume that                                  
$\Bmu = (-;\mu'')$.  There exists an isomorphism of $S_n$-modules                                   
\begin{equation*}                                                                                   
H^{2i}(\SB_{\Bmu})_{\vf_-} \simeq \begin{cases}                                                           
                            H^{i-n}(\SB^0_{\mu''})                                                  
                              &\quad\text{ if $i \equiv n \pmod 2$, } \\                            
                            0                                                                       
                              &\quad\text{ otherwise. }                                             
                            \end{cases}                                                             
\end{equation*}                                                                                     
\end{enumerate}                                                                                     
\end{cor}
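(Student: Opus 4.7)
The plan is to apply Proposition 3.19 with $\vf = \vf_+$ (so that $S_\vf \simeq S_n$ with $m = n$) for part (i), and with $\vf = \vf_-$ (so $m = 0$) for part (ii), and in each case to compute $g^{\Bmu}_{\Bnu}(t)$ directly from the defining variety $\SG^{\Bmu}_{\Bnu}$ in (3.1.1). In both extreme cases the sum in Proposition 3.19 should collapse to a single term, because $\SG^{\Bmu}_{\Bnu}$ degenerates to a point or is empty whenever one of $\nu'$, $\nu''$ is empty.

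For (i), I would note that $|\nu'| = n$ forces $\Bnu = (\nu',-)$. The condition in (3.1.1) that $x|_{V/W}$ be of type $-$ then forces $W = V$, so $\SG^{\Bmu}_{(\nu',-)}$ is the single point $\{V\}$ when $\nu'$ equals the Jordan type of $x$ on $V$, and is empty otherwise. For $(x,v) \in \SO_{\Bmu}$ this Jordan type is $\mu' + \mu''$, a standard fact from the parametrization in 2.3 (cf.\ [AH], [T]). Hence $g^{\Bmu}_{(\nu',-)}(t) = \delta_{\nu',\mu'+\mu''}$, and only $g^{\Bmu}_{(\mu'+\mu'',-),0} = 1$ contributes. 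Combined with the fact that $\SB^0_\emptyset$ is a point (so $H^{2k}(\SB^0_\emptyset) \ne 0$ only for $k=0$), substituting into Proposition 3.19 with $i = (n-m) + 2\ell + 2(j+k) = 2j$ collapses the double sum to $H^{2j}(\SB^0_{\mu'+\mu''})$ with $2j = i$, giving (i) together with the vanishing for odd $i$.

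For (ii), $|\nu'| = 0$ forces $\Bnu = (-,\nu'')$, and the condition $x|_W$ of type $-$ forces $W = 0$, hence the requirement $v \in W$ forces $v = 0$. Since $W = E^x v$ in the description in 2.3, the condition $v = 0$ is equivalent to $\Bmu = (-,\mu'')$; when $\Bmu \ne (-,\mu'')$ the variety $\SG^{\Bmu}_{(-,\nu'')}$ is empty for every $\nu''$, yielding $H^{2i}(\SB_{\Bmu})_{\vf_-} = 0$. When $\Bmu = (-,\mu'')$, the variety is a single point iff $\nu''$ equals the Jordan type $\mu''$ of $x$; thus $g^{\Bmu}_{(-,\nu''),\ell} = \delta_{\nu'',\mu''}\delta_{\ell,0}$. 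Substituting into Proposition 3.19 with $j = 0$, $\ell = 0$, and $i = n + 2k$ then gives $H^{2k}(\SB^0_{\mu''}) = H^{i-n}(\SB^0_{\mu''})$ when $i \equiv n \pmod 2$, yielding (ii).

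The main point throughout is the degeneration of $\SG^{\Bmu}_{\Bnu}$ noted above, which is immediate from (3.1.1); no serious obstacle is anticipated beyond invoking the standard identification $\mu' + \mu''$ of the Jordan type of $x$ for $(x,v) \in \SO_{\Bmu}$ from [AH], [T].
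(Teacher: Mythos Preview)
Your argument is correct, but it proceeds differently from the paper's proof. The paper does not apply the conclusion of Proposition~3.19; instead it returns to the intermediate identity (3.19.1),
\[
\sum_{i \ge 0} H^{2i}(\SB_{\Bmu})_{\vf}\, t^i \;=\; \sum_{\Bla \in \SP_{n,2}} \wt K_{\Bla,\Bmu}(t)\,(V_{\Bla})_{\vf},
\]
observes that $(V_{\Bla})_{\vf_+}$ (resp.\ $(V_{\Bla})_{\vf_-}$) vanishes unless $\Bla=(\la';-)$ (resp.\ $\Bla=(-;\la'')$), and then invokes Proposition~2.5\,(iii) (resp.\ (ii)) to identify $\wt K_{\Bla,\Bmu}(t)$ with $\wt K_{\la',\mu'+\mu''}(t^2)$ (resp.\ $t^n\wt K_{\la'',\mu''}(t^2)$, after the vanishing for $\Bmu\ne(-;\mu'')$). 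The $GL$-analogue of Proposition~2.14 then converts the right-hand side into the generating series for the $GL$-Springer fibres. Your route instead stays with the formula of Proposition~3.19 itself and computes the structure constants $g^{\Bmu}_{(\nu',-)}$ and $g^{\Bmu}_{(-,\nu'')}$ directly from the definition (3.1.1) of $\SG^{\Bmu}_{\Bnu}$, which in these extreme cases degenerates to a point or is empty. This avoids quoting Proposition~2.5, at the cost of appealing to the standard fact that the Jordan type of $x$ for $(x,v)\in\SO_{\Bmu}$ is $\mu'+\mu''$; the two arguments are essentially dual, since Proposition~2.5\,(iii) together with (3.2.1) is equivalent to $g^{\Bmu}_{(\nu',-)}=\d_{\nu',\mu'+\mu''}$, and Proposition~2.5\,(ii) with the triangularity of $K_{\Bla,\Bmu}$ encodes $g^{\Bmu}_{(-,\nu'')}=\d_{\mu',\emptyset}\d_{\nu'',\mu''}$. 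One small remark: in part~(ii) you deduce the vanishing of $H^{2i}(\SB_{\Bmu})_{\vf_-}$ for $\mu'\ne\emptyset$ from the vanishing of the right-hand side of Proposition~3.19, which is stated only as an equality in the Grothendieck group; this is harmless since an honest module which is zero in the Grothendieck group is zero, but it is worth making explicit.
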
   

\begin{proof}
Assume that $\vf = \vf_+$.
In that case $(V_{\Bla})_{\vf} = 0$ unless                 
$\Bla = (\la';-)$, and in that case, $(V_{\Bla})_{\vf} \simeq V_{\la'}$ as $S_n$-modules.                 
Moreover, if $\Bla = (\la';-)$, we have                                                             
$\wt K_{\Bla, \Bmu}(t) = \wt K_{\la',\mu' + \mu''}(t^2)$ by Proposition 2.5 (ii).
On the other hand, assume that $\vf = \vf_-$.
Then we have
$(V_{\Bla})_{\vf} = 0$ unless $\Bla = (-;\la'')$, and in that case,                                      
$(V_{\Bla})_{\vf} \simeq V_{\la''}$ as $S_n$-modules.                                                     
Moreover, by Proposition 2.5 (i),                                                                   
if $\Bla = (-;\la'')$, $\wt K_{\Bla,\Bmu}(t) = 0$ unless                                            
$\Bmu = (-;\mu'')$, and in that case, $\wt K_{\Bla, \Bmu}(t) = t^n\wt K_{\la'',\mu''}(t)$.  
Then the corollary follows from  (3.19.1) by a similar discussion as in the proof of 
Proposition 3.19. 
\end{proof}

\para{3.22.}
Recall that the Hall-Littlewood function $P_{\Bla}(x;t)$ is defined by
two types of variables $x^{(1)}, x^{(2)}$. Here we consider a specialization 
of those variables. We denote by $P_{\Bla}(x;t)|_{x = (y,y)}$ the 
function in $\vL[t]$ obtained by substituting $x^{(1)} = x^{(2)} = y$. 
We further consider the specialization of this function by putting $t = 1$, i.e., 
$P_{\Bla}(x;1)|_{x = (y,y)}$.  The following result shows that the behaviour
of $P_{\Bla}(x;t)$ at $t = 1$ is quite different from that of ordinary 
Hall-Littlewood functions (cf. Remark 2.8).

\begin{prop}  
Under the notation as above, we have
\begin{equation*}
       P_{\Bmu}(x;1)|_{x = (y,y)} = 
            \begin{cases}
                m_{\mu''}(y)   &\quad\text{ if $\Bmu = (-;\mu'')$, }\\
                0              &\quad\text{ otherwise. }
            \end{cases}
\end{equation*}
\end{prop}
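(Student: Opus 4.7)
My plan is to split into the two cases separately: the case $\Bmu=(-;\mu'')$ falls out of Corollary~1.12, while the case $\mu'\ne\emptyset$ will be derived by extracting a linear system of relations for the unknowns $F_{\Bmu}(y):=P_{\Bmu}(x;1)|_{x=(y,y)}$ from the Schur-to-$P$ transition, and then inverting a unitriangular submatrix of the Kostka matrix at $t=1$.

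I would first dispatch the base case: Corollary~1.12 says $P_{(-,\mu'')}(x;t)=P_{\mu''}(x^{(2)};t^2)$, and substituting $x^{(2)}=y$ and $t=1$ gives $P_{\mu''}(y;1)=m_{\mu''}(y)$ by (1.2.1).

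Next, for the case $\mu'\ne\emptyset$, I would start from $s_{\Bla}(x)=\sum_{\Bmu}K_{\Bla,\Bmu}(t)P_{\Bmu}(x;t)$ and specialize $t=1$, $x=(y,y)$. Splitting the right-hand side according to whether $\mu'=\emptyset$ or not, and feeding in the base case for the $\mu'=\emptyset$ summands, gives
\begin{equation*}
s_{\la'}(y)s_{\la''}(y) = \sum_{\mu''}K_{\Bla,(-;\mu'')}(1)\,m_{\mu''}(y) + \sum_{\Bmu,\ \mu'\ne\emptyset}K_{\Bla,\Bmu}(1)\,F_{\Bmu}(y).
\end{equation*}
I would then use (3.4.2) at $t=1$, namely $K_{\Bla,(-;\mu'')}(1)=\sum_{\eta}c^{\eta}_{\la'\la''}K_{\eta,\mu''}(1)$, together with the classical identity $\sum_{\mu''}K_{\eta,\mu''}(1)m_{\mu''}(y)=s_{\eta}(y)$ (which is (1.2.2) evaluated at $t=1$ via (1.2.1)) and (3.3.2), to identify the first sum on the right with $\sum_{\eta}c^{\eta}_{\la'\la''}s_{\eta}(y)=s_{\la'}(y)s_{\la''}(y)$. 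Cancelling yields the homogeneous relations
\begin{equation*}
\sum_{\Bmu,\ \mu'\ne\emptyset}K_{\Bla,\Bmu}(1)\,F_{\Bmu}(y) = 0 \qquad (\Bla\in\SP_{n,2}).
\end{equation*}

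Finally, restricting the indexing set for $\Bla$ to $\SP_{n,2}^{*}:=\{\Bmu\in\SP_{n,2}:\mu'\ne\emptyset\}$ and invoking Proposition~1.7, the resulting square matrix $\bigl(K_{\Bla,\Bmu}(1)\bigr)_{\Bla,\Bmu\in\SP_{n,2}^{*}}$ is upper unitriangular in the (restricted) dominance order and therefore invertible, which forces $F_{\Bmu}(y)=0$ for every $\mu'\ne\emptyset$. The only step that requires any real work is the identity $\sum_{\mu''}K_{\Bla,(-;\mu'')}(1)m_{\mu''}(y)=s_{\la'}(y)s_{\la''}(y)$, but it reduces cleanly to Lemma~3.4 combined with the standard symmetric-function identities recalled above; everything else is linear algebra built on the unitriangularity from Proposition~1.7.
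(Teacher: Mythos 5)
Your proposal is correct and follows essentially the same route as the paper: the base case via Corollary~1.12 and (1.2.1), then the homogeneous system $\sum_{\mu'\ne\emptyset}K_{\Bla,\Bmu}(1)F_{\Bmu}(y)=0$ obtained by computing $s_{\la'}(y)s_{\la''}(y)$ in two ways, and finally unitriangularity of $(K_{\Bla,\Bmu}(1))$ from Proposition~1.7 (the paper phrases this as induction on the partial order rather than inverting the restricted submatrix, which is equivalent). The only cosmetic difference is that you identify the $\mu'=\emptyset$ part of the sum with $s_{\la'}(y)s_{\la''}(y)$ via (3.4.2) and the Littlewood--Richardson expansion, whereas the paper uses (3.4.1) and the structure constants $f^{\mu''}_{\nu'\nu''}(1)$; both are parts of the same Lemma~3.4, so this is not a substantive divergence.
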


\begin{proof}
Assume that $\Bmu = (-;\mu'')$.  
Since $P_{\Bmu}(x;t) = P_{\mu''}(x^{(2)};t)$ for $\Bmu = (-;\mu'')$ by Corollary 1.12, 
we have 
\begin{equation*}
\tag{3.23.1}
P_{\Bmu}(x;1)|_{x = (y,y)} = m_{\mu''}(y),
\end{equation*}
which shows the first equality.
\par
By (1.2.1) and (1.2.2), for any $\la \in \SP_n$, we have
\begin{equation*}
s_{\la}(y) = \sum_{\mu \in \SP_n}K_{\la,\mu}(1)m_{\mu}(y).
\end{equation*}
Also by substituting $t = 1$ in the formula (3.3.3) and by using (1.2.1), we have, 
for any partitions $\mu, \nu$,  

\begin{equation*}
m_{\mu}(y)m_{\nu}(y) = \sum_{\la \in \SP_n}f^{\la}_{\mu\nu}(1)m_{\la}(y).
\end{equation*}

Thus for $\Bla = (\la';\la'') \in \SP_{n,2}$, we have

\begin{align*}
\tag{3.23.2}
s_{\Bla}(x)|_{x = (y,y)} &= s_{\la'}(y)s_{\la''}(y)  \\
                         &= \sum_{\nu'}\sum_{\nu''}K_{\la',\nu'}(1)K_{\la'', \nu''}(1)
                              m_{\nu'}(y)m_{\nu''}(y) \\
                         &= \sum_{\mu'' \in \SP_n}m_{\mu''}(y)
             \sum_{\nu', \nu''}f^{\mu''}_{\nu'\nu''}(1)K_{\la',\nu'}(1)K_{\la'',\nu''}(1) \\
                         &= \sum_{\Bmu = (-, \mu'')}K_{\Bla,\Bmu}(1)m_{\mu''}(y).
\end{align*}
The last equality follows from (3.4.1).
On the other hand, by 1.6, we have

\begin{equation*}
\tag{3.23.3}
s_{\Bla}(x)|_{x = (y,y)} = \sum_{\Bmu \in \SP_{n,2}}K_{\Bla,\Bmu}(1)P_{\Bmu}(x;1)|_{x = (y,y)}.
\end{equation*}
Put $\SP_{n,2}' = \{ \Bmu = (\mu', \mu'') \in \SP_{n,2} \mid |\mu'| \ne 0 \}$.
Then (3.23.2) and (3.23.3), together with (3.23.1)  imply that 

\begin{equation*}
\tag{3.23.4}
\sum_{\Bmu \in \SP'_{n,2}}K_{\Bla,\Bmu}(1)P_{\Bmu}(x; 1)|_{x = (y,y)} = 0
\end{equation*}  
for any $\Bla \in \SP_{n,2}$.  
By Proposition 1.7, $K_{\Bla,\Bmu}(t) = 0$ unless $\Bmu \le \Bla$, and 
$K_{\Bla,\Bla}(t) = 1$.  Now the proposition follows from (3.23.4) 
by induction on the partial order $\le $ on $\SP'_{n,2}$. The proposition is proved.  
\end{proof}
\par\bigskip

\section {Hall bimodule }

\para{4.1.}
Before going into detalis on the Hall bimodule, we show a preliminary result. 
In this section we fix a total order on $\SP_{n,2}$ which is compatible with the partial order 
$\le$ on $\SP_{n,2}$.  
For $\Bnu = (\nu', \nu'') \in \SP_{n,2}$, 
put $R_{\Bnu}(x;t) = P_{\nu'}(x^{(1)},t^2)P_{\nu''}(x^{(2)}, t^2)$.
Then $\{ R_{\Bnu} \mid \Bnu \in \SP_{n,2}\}$ gives a basis of $\Xi^n[t]$.
Hence there exist polynomials $h^{\Bmu}_{\Bnu}(t) \in \BZ[t]$ such that

\begin{equation*}
\tag{4.1.1}
R_{\Bnu}(x;t) = 
      \sum_{\Bmu \in \SP_{n,2}}h^{\Bmu}_{\Bnu}(t)P_{\Bmu}(x;t).
\end{equation*}
The transition matrix between the bases $\{ s_{\Bla}\}$ and $\{R_{\Bnu}\}$ is lower 
unitriangular, (with respect to the fixed total order) 
and a similar result holds also for the bases 
$\{s_{\Bla}\}$ and $\{P_{\Bmu}\}$. Hence the transition matrix 
$(h^{\Bmu}_{\Bnu}(t))_{\Bmu, \Bnu \in \SP_{n,2}}$ between 
$\{R_{\Bnu}\}$ and $\{P_{\Bmu}\}$ is also lower unitriangular 
(we regard that the $\Bnu\Bmu$-entry is $h^{\Bmu}_{\Bnu}(t)$).  
The following formula is an analogue of the formula (3.3.4) relating the polynomials  
$f_{\mu\nu}^{\la}(t)$ with the Hall polynomials $g^{\la}_{\mu\nu}(t)$. 

\begin{prop}  
Let $g^{\Bmu}_{\Bnu}(t)$ be the polynomials given in Propostion 3.2.  Then 
\begin{equation*}
\tag{4.2.1}
h^{\Bmu}_{\Bnu}(t) = t^{a(\Bmu) - a(\Bnu)}g^{\Bmu}_{\Bnu}(t^{-2}).
\end{equation*}
In particular, the matrix $(g^{\Bmu}_{\Bnu}(t))_{\Bmu,\Bnu}$ is lower unitriangular. 
\end{prop}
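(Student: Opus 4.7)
The plan is to compare two different expressions for the double Kostka polynomial $K_{\Bla,\Bmu}(t)$ as a sum indexed by $\Bnu \in \SP_{n,2}$ whose coefficients are built out of ordinary Kostka polynomials $K_{\la',\nu'}(t^2)$ and $K_{\la'',\nu''}(t^2)$: one coming from the definition of $R_{\Bnu}$ together with (4.1.1), and the other coming from the Achar--Henderson formula (3.3.1). Matching the two will force (4.2.1); the unitriangularity claim will then be an immediate byproduct.

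Concretely, I would first factor $s_{\Bla}(x) = s_{\la'}(x^{(1)})s_{\la''}(x^{(2)})$ and expand each factor via the ordinary Kostka expansion (1.2.2) with parameter $t^2$. This rewrites $s_{\Bla}(x)$ as $\sum_{\Bnu}K_{\la',\nu'}(t^2)K_{\la'',\nu''}(t^2)R_{\Bnu}(x;t)$. Substituting (4.1.1) and comparing coefficients of $P_{\Bmu}(x;t)$ with the direct expansion $s_{\Bla}(x) = \sum_{\Bmu}K_{\Bla,\Bmu}(t)P_{\Bmu}(x;t)$ yields the identity
\begin{equation*}
K_{\Bla,\Bmu}(t) \;=\; \sum_{\Bnu \in \SP_{n,2}} K_{\la',\nu'}(t^2)\,K_{\la'',\nu''}(t^2)\,h^{\Bmu}_{\Bnu}(t).
\end{equation*}
On the other hand, the Achar--Henderson rewriting (3.3.1) reads
\begin{equation*}
K_{\Bla,\Bmu}(t) \;=\; t^{|\mu''|-|\la''|}\sum_{\Bnu}t^{2n(\Bmu)-2n(\Bnu)}\,g^{\Bmu}_{\Bnu}(t^{-2})\,K_{\la',\nu'}(t^2)\,K_{\la'',\nu''}(t^2).
\end{equation*}

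Since $K_{\la',\nu'}(t^2)$ vanishes unless $|\nu'| = |\la'|$, and similarly for the second factor, only $\Bnu$ with $|\nu'| = |\la'|$ and $|\nu''| = |\la''|$ actually contribute on either side. Within this effective summation range I may substitute $|\la''| = |\nu''|$ into the prefactor, and then the identity $a(\Bmu) - a(\Bnu) = (|\mu''|-|\nu''|) + 2(n(\Bmu)-n(\Bnu))$, which follows from the definition $a(\Bla) = 2n(\Bla) + |\la''|$ in (1.6.1), collapses the Achar--Henderson expression to $\sum_{\Bnu}t^{a(\Bmu)-a(\Bnu)}g^{\Bmu}_{\Bnu}(t^{-2})K_{\la',\nu'}(t^2)K_{\la'',\nu''}(t^2)$. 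The matrix $\bigl(K_{\la',\nu'}(t^2)K_{\la'',\nu''}(t^2)\bigr)_{(\la',\la''),(\nu',\nu'')}$ is block-diagonal in the size pair $(|\la'|,|\la''|)$, each block being a tensor product of two invertible unitriangular ordinary Kostka matrices and hence itself invertible; comparing coefficients for every $\Bla$ and inverting this matrix therefore forces (4.2.1).

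For the second assertion, the lower unitriangularity of $(h^{\Bmu}_{\Bnu}(t))$ recorded in 4.1, combined with (4.2.1) and the fact that the exponent $a(\Bmu)-a(\Bmu)$ is $0$ on the diagonal, immediately gives $g^{\Bmu}_{\Bmu}(t) = 1$ and $g^{\Bmu}_{\Bnu}(t) = 0$ unless $\Bmu \le \Bnu$, so $(g^{\Bmu}_{\Bnu}(t))$ is lower unitriangular as well. The only real obstacle is clerical book-keeping with the exponents of $t$ and the restriction of the summation range to the diagonal blocks; no new geometric or combinatorial input beyond Proposition 3.2 is required, since (4.2.1) is essentially a dictionary between $h^{\Bmu}_{\Bnu}$ and $g^{\Bmu}_{\Bnu}$ extracted from the Achar--Henderson formula.
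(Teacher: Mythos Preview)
Your proposal is correct and follows essentially the same route as the paper: expand $s_{\Bla}(x)=s_{\la'}(x^{(1)})s_{\la''}(x^{(2)})$ through the ordinary Kostka expansions at $t^2$ to obtain $K_{\Bla,\Bmu}(t)=\sum_{\Bnu}h^{\Bmu}_{\Bnu}(t)K_{\la',\nu'}(t^2)K_{\la'',\nu''}(t^2)$, match this against the Achar--Henderson formula (3.3.1) after using $|\nu''|=|\la''|$ to rewrite the $t$-exponent as $a(\Bmu)-a(\Bnu)$, and then cancel the invertible Kostka matrix $(K_{\la',\nu'}(t^2)K_{\la'',\nu''}(t^2))$. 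Your treatment of the exponent bookkeeping and of the block-diagonal invertibility is slightly more explicit than the paper's, but the argument is the same.
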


\begin{proof}
For any $\Bla = (\la',\la'') \in \SP_{n,2}$, we have

\begin{align*}
s_{\Bla}(x) &= s_{\la'}(x^{(1)})s_{\la''}(x^{(2)}) \\
            &= \sum_{\nu'}K_{\la',\nu'}(t^2)P_{\nu'}(x^{(1)};t^2)
               \sum_{\nu''}K_{\la'',\nu''}(t^2)P_{\nu''}(x^{(2)};t^2)  \\ 
            &= \sum_{\nu',\nu''}K_{\la',\nu'}(t^2)K_{\la'',\nu''}(t^2)
                      \sum_{\Bmu \in \SP_{n,2}}h^{\Bmu}_{\Bnu}(t)P_{\Bmu}(x; t)  \\
            &= \sum_{\Bmu \in \SP_{n,2}}\biggl(
               \sum_{\nu', \nu''}K_{\la',\nu'}(t^2)K_{\la'',\nu''}(t^2)
                    h^{\Bmu}_{\Bnu}(t)\biggr)P_{\Bmu}(x;t).
\end{align*}
Since

\begin{equation*}
s_{\Bla}(x) = \sum_{\Bmu \in \SP_{n,2}}K_{\Bla, \Bmu}(t)P_{\Bmu}(x;t),
\end{equation*}
by comparing the coefficients of $P_{\Bmu}(x;t)$, we have

\begin{equation*}
\tag{4.2.2}
K_{\Bla, \Bmu}(t) = \sum_{\nu', \nu''}h^{\Bmu}_{\Bnu}(t)K_{\la',\nu'}(t^2)
                         K_{\la'',\nu''}(t^2).
\end{equation*}
On the other hand, if we notice that 
$K_{\la'',\nu''}(t^2)\ne 0$ only when $|\la''| = |\nu''|$,  
the formual (3.3.1) can be rewritten as 

\begin{equation*}
\tag{4.2.3}
K_{\Bla, \Bmu}(t) = \sum_{\nu',\nu''}t^{a(\Bmu) - a(\Bnu)}g^{\Bmu}_{\Bnu}(t^{-2})
                          K_{\la',\nu'}(t^2)K_{\la'', \nu''}(t^2).
\end{equation*}
Since $(K_{\la',\nu'}(t^2)K_{\la'',\nu''}(t^2))_{\Bla, \Bnu \in \SP_{n,2}}$
is a unitriangular matrix with respect to the partial order on $\SP_{n,2}$, 
the proposition is obtained by comparing (4.2.2) and (4.2.3). 
\end{proof}

\para{4.3.}
We keep the assumption in 3.1, in particular, $k$ is an algebraic closure of 
$\BF_q$. Based on the idea of Finkelberg-Ginzburg-Travkin [FGT], 
we introduce the Hall bimodule. 
Let $\Bla, \Bmu \in \SP_{n,2}, \a \in \SP_n$, and take $(x,v) \in \SO_{\Bla}$.
We define varieties
\begin{align*}
\SG^{\Bla}_{\a,\Bmu} = \{W \subset V &\mid W \text{ : $x$-stable subspace, } \\
                                   &x|_W \text{ : type } \a, (x|_{V/W}, v \, \text{(mod } W ))
                                      \text{ : type } \Bmu\}, \\   
\SG^{\Bla}_{\Bmu,\a} = \{W \subset V 
            &\mid W\text{ : $x$-stable subspace, } v \in W,  \\
                         &(x|_W, v)\text{ : type $\Bmu$ }, x|_{V/W}\text{ : type $\a$} \}.   
\end{align*}
If $(x,v) \in \SO_{\Bla}(\BF_q)$, those vatieties are defined over $\BF_q$, and one can 
consider the subsets of $\BF_q$-fixed points. 
Assuming that $(x,v) \in \SO_{\Bla}(\BF_q)$, we define integers $G^{\Bla}_{\a,\Bmu}(q)$ and 
$G^{\Bla}_{\Bmu, \a}(q)$ by 
\begin{equation*}
\tag{4.3.1}
G^{\Bla}_{\a,\Bmu}(q) = |\SG^{\Bla}_{\a, \Bmu}(\BF_q)|,  \quad 
 G^{\Bla}_{\Bmu,\a}(q) = |\SG^{\Bla}_{\Bmu,\a}(\BF_q)|.
\end{equation*}
Note that $G^{\Bla}_{\a,\Bmu}(q), G^{\Bla}_{\Bmu,\a}(q)$ are independent of the choice of 
$(x,v) \in \SO_{\Bla}(\BF_q)$.
It is clear from the definition that $G^{\Bla}_{\a,\Bmu}(q) = G^{\Bla}_{\Bmu,\a}(q) = 0$ 
unless $|\Bla| = |\a| + |\Bmu|$. 
In the case where $\Bla = (-;\la''), \Bmu = (-;\mu'')$, 
$G^{\Bla}_{\a,\Bmu}(q) = G^{\Bla}_{\Bmu, \a}(q)$ coincides with 
$g^{\la''}_{\mu'',\a}(q) = g^{\la''}_{\mu'',\a}|_{t = q}$, where 
$g^{\la''}_{\mu'',\a}$ is the original Hall polynomial given in 3.3.
\par
Put $\SP = \coprod_{n \ge 0}\SP_n$ and $\SP^{(2)} = \coprod_{n \ge 0}\SP_{n,2}$.
Recall that the definition of the Hall algebea $\SH$; $\SH$  is the free $\BZ[t]$-module with basis
$\{ \Fu_{\a} \mid \a \in \SP \}$, and the multiplication is defined by 
\begin{equation*}
\Fu_{\b}\Fu_{\g} = \sum_{\a \in \SP_n}g^{\a}_{\b,\g}(t)\Fu_{\a},
\end{equation*}
where $n = |\b| + |\g|$. 
$\SH$ is a commutative, associative algebra over $\BZ[t]$. We define the 
$\BZ$-algebra $\SH_q$ by  
$\SH_q = \BZ \otimes_{\BZ[t]}\SH$, under the specialization 
$\BZ[t] \to \BZ$, $t \mapsto q$.  
\par
We define a Hall bimodule $\SM_q$ as follows; 
$\SM_q$ is a free $\BZ$-module with basis $\{ \Fu_{\Bla} \mid \Bla \in \SP^{(2)} \}$.
We define actions (the left action and the rignt acton) of $\SH_q$ on $\SM_q$ by 
\begin{align*}
\tag{4.3.2}
\Fu_{\a}\Fu_{\Bmu} &= \sum_{\Bla \in \SP_{n,2}}G^{\Bla}_{\a,\Bmu}(q)\Fu_{\Bla}, \\
\tag{4.3.3}
\Fu_{\Bmu}\Fu_{\a} &= \sum_{\Bla \in \SP_{n,2}}G^{\Bla}_{\Bmu,\a}(q)\Fu_{\Bla},
\end{align*} 
where $n = |\a| + |\Bmu|$. 
Then $\SM_q$ turns out to be a $\SH_q$-bimodule, which is verified as follows;
for partitions $\b, \g$, and double partitions $\Bla, \Bmu$, we define a variety 

\begin{align*}
\SG^{\Bla}_{\b,\g; \Bmu} = \{ (W_1 &\subset W_2) \mid W_1, W_2 
             \text{ : $x$-stable subspaces of $V$, } \\
                               &x_{W_1} \text{: type $\b$, } 
                                  x_{W_2/W_1} \text{ : type $\g$, } 
                                     (x_{V/W_2}, v\text{ (mod $W_2$)}) \text{ : type $\Bmu$ } \}. 
\end{align*}
We compute the number $|\SG^{\Bla}_{\b,\g;\Bmu}(\BF_q)|$ in two different ways. 
Put $n = |\b| + |\g|$. Assume that $x_{W_2}$ has type $\a$.  Then the cardinality of 
such $W_2$ is given by $G^{\Bla}_{\a,\Bmu}(q)$.  For each $W_2$, the cardinality of $W_1$ 
is given by $g^{\a}_{\b,\g}(q)$.  It follows that 
\begin {equation*}
\tag{4.3.4}
|\SG^{\Bla}_{\b,\g;\Bmu}(\BF_q)| = \sum_{\a \in \SP_n}g^{\a}_{\b,\g}(q)G^{\Bla}_{\a,\Bmu}(q).
\end{equation*}
On the other hand, the cardinality of $W_1$ satisfying the condition that 
$x|_{W_1}$ has type $\b$, $(x|_{V/W_1}, v\text{ (mod $W_1$)})$ has type $\Bnu$ is 
$G^{\Bla}_{\b,\Bnu}(q)$. For each $W_1$, the cardinality of $W_2$ such that 
$W_1 \subset W_2 \subset V$ and that $x|_{W_2/W_1}$ has type $\g$,  
$(x|_{V/W_2}, v\text{ (mod $W_2$)})$ has type $\Bmu$ is given by $G^{\Bnu}_{\g, \Bmu}(q)$.
It follows that 
\begin{equation*}
\tag{4.3.5}
|\SG^{\Bla}_{\b,\g;\Bmu}(\BF_q)| = \sum_{\Bnu \in \SP_{m,2}}
                     G^{\Bla}_{\b, \Bnu}(q)G^{\Bnu}_{\g,\Bmu}(q),
\end{equation*}
where $m = |\Bla| - |\b|$. 
Now the equality (4.3.4) $=$ (4.3.5) implies that 
$\Fu_{\b}(\Fu_{\g}\Fu_{\Bmu}) = (\Fu_{\b}\Fu_{\g})\Fu_{\Bmu}$.  
In a similar way, one can show that 
$(\Fu_{\Bmu}\Fu_{\g})\Fu_{\b} = \Fu_{\Bmu}(\Fu_{\g}\Fu_{\b})$. 
Next we consider a variety

\begin{equation*}
\begin{split}
\SG^{\Bla}_{\a;\Bmu;\b} = \{(&W_1 \subset W_2) \mid W_1, W_2 \text{ $x$-stable subspaces of $V$},
                              v \in W_2  \\
                     &x|_{W_1} \text{ : type $\a$, }, (x_{W_2/W_1}, v\text{ (mod $W_1$)})
                          \text{ : type $\Bmu$, } x_{V/W_2} \text{ : type $\b$} \}.
\end{split}
\end{equation*}
We compute the number $|\SG^{\Bla}_{\a;\Bmu;\b}(\BF_q)|$ in two different ways.
Take $W_2 \in \SG^{\Bla}_{\Bnu,\b}(\BF_q)$ for some $\Bnu \in \SP_{n,2}$ with 
$n = |\Bla| - |\b|$.  The cardinality of such $W_2$ is $G^{\Bla}_{\Bnu,\b}(q)$.
For each $W_2$, the cardinality of $W_1$ such that 
$(W_1 \subset W_2) \in \SG^{\Bla}_{\a;\Bmu;\b}(\BF_q)$ is given by $G^{\Bnu}_{\a,\Bmu}(q)$.
Thus
\begin{equation*}
|\SG^{\Bla}_{\a;\Bmu;\b}(\BF_q)| = \sum_{\Bnu \in \SP_{n,2}}
                           G^{\Bla}_{\Bnu,\b}(q)G^{\Bnu}_{\a,\Bmu}(q). 
\end{equation*}
On the other hand, first we take $W_1 \in \SG^{\Bla}_{\a,\Bnu}(\BF_q)$, and then take 
$W_2$ such that $(W_1 \subset W_2)$ is contained in $\FS^{\Bla}_{\a;\Bmu;\b}(\BF_q)$. 
This implies that 
\begin{equation*}
|\SG^{\Bla}_{\a;\Bmu;\b}(\BF_q)| = \sum_{\Bnu \in \SP_{n',2}}
                G^{\Bla}_{\a,\Bnu}(q)G^{\Bnu}_{\Bmu,\b}(q),
\end{equation*}
where $n' = |\Bla| - |\a|$. 
Comparing these two equalities, we have 
$\Fu_{\a}(\Fu_{\Bmu}\Fu_{\b}) = (\Fu_{\a}\Fu_{\Bmu})\Fu_{\b}$.
Thus $\SM_q$ has a structure of $\SH_q$-bimodule. 
\par
For an integer $n \ge 0$, let $\SM_q^n$ be the $\BZ$-submodule of $\SM_q$ 
spanned by $\Fu_{\Bla}$ with $\Bla \in \SP_{n,2}$.  Then we have 
$\SM_q = \bigoplus_{n \ge 0}\SM_q^n$. Similarly, we have a decompositon 
$\SH_q = \bigoplus_{n \ge 0}\SH_q^n$.
The above discussion shows that $\SM_q$ has a structure of graded $\SH_q$-bimodule,
i.e., $\SH_q^m\SM_q^n \subset \SM_q^{n+m}$, and 
$\SM_q^n\SH_q^m \subset \SM_q^{n+m}$.  

\para{4.4.}
For $\Bla = (-;-)$, put $\Fu_0 = \Fu_{\Bla}$.  It is easy to see that 
$\Fu_0\Fu_{\b} = \Fu_{(-;\b)}$ for $\b \in \SP$
(but $\Fu_{\b}\Fu_0 \ne \Fu_{(\b;-)}$). 
Take $\a, \b \in \SP$.  One can check that 
$G^{\Bla}_{\a; (-;\b)}(q) = g^{\Bla}_{(\a;\b)}(q)$ for $\Bla \in \SP^{(2)}$.
It follows, for $\a, \b \in \SP$,  that 

\begin{equation*}
\tag{4.4.1}
\Fu_{\a}\Fu_0\Fu_{\b} = \sum_{\Bla \in \SP_{n,2}}g^{\Bla}_{(\a;\b)}(q)\Fu_{\Bla},
\end{equation*}
where $n = |\a| + |\b|$. 
For each $\Bmu = (\mu';\mu'') \in \SP_{n,2}$, put $\Fv_{\Bmu} = \Fu_{\mu'}\Fu_0\Fu_{\mu''}$.
We have a lemma.

\begin{lem}  
$\{ \Fv_{\Bmu} \mid \Bmu \in \SP_{n,2} \}$ gives a basis of $\SM_q^n$.
Hence $\{ \Fv_{\Bmu} \mid \Bmu \in \SP^{(2)} \}$ gives a basis of $\SM_q$. 
For $\Bmu \in \SP_{n,2}$, we have 
\begin{equation*}
\tag{4.5.1}
\Fv_{\Bmu} = \sum_{\Bla \in \SP_{n,2}}g^{\Bla}_{\Bmu}(q)\Fu_{\Bla}.
\end{equation*}
In particular, $\SM_q$ is a free $\SH_q$-bimodule of rank 1 (with a basis 
$\Fv_{(-;-)} = \Fu_0$). 
\end{lem}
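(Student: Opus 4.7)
The plan is to deduce everything from formula (4.4.1) together with Proposition 4.2. First, I would observe that formula (4.5.1) is essentially a restatement of (4.4.1): by definition $\Fv_{\Bmu} = \Fu_{\mu'}\Fu_0\Fu_{\mu''}$ with $\Bmu = (\mu';\mu'')$, so applying (4.4.1) with $\a = \mu'$ and $\b = \mu''$ yields exactly $\Fv_{\Bmu} = \sum_{\Bla \in \SP_{n,2}} g^{\Bla}_{\Bmu}(q)\Fu_{\Bla}$.

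Next, to show that $\{\Fv_{\Bmu} \mid \Bmu \in \SP_{n,2}\}$ is a $\BZ$-basis of $\SM_q^n$, I would use the fact that $\{\Fu_{\Bla} \mid \Bla \in \SP_{n,2}\}$ is a basis of $\SM_q^n$ by construction. Hence it suffices to show that the transition matrix $(g^{\Bla}_{\Bmu}(q))_{\Bla,\Bmu \in \SP_{n,2}}$ is invertible over $\BZ$. This is precisely where Proposition 4.2 enters: it asserts that $(g^{\Bmu}_{\Bnu}(t))$ is lower unitriangular with respect to the fixed total order on $\SP_{n,2}$ compatible with the partial order $\le$. Specializing $t \mapsto q$ yields a lower unitriangular integer matrix, hence invertible over $\BZ$. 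The basis statement for $\SM_q^n$ follows, and taking the disjoint union over $n \ge 0$ produces the basis of the whole of $\SM_q$.

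Finally, for the free bimodule statement, I would consider the natural $\SH_q$-bimodule homomorphism $\Phi : \SH_q \otimes_{\BZ} \SH_q \to \SM_q$ sending the generator $e = 1 \otimes 1$ to $\Fu_0$, so that $\Phi(\Fu_{\a} \otimes \Fu_{\b}) = \Fu_{\a}\Fu_0\Fu_{\b} = \Fv_{(\a;\b)}$. That $\Phi$ is a well-defined bimodule map is immediate from the associativity of the left and right $\SH_q$-actions on $\SM_q$, already verified in 4.3. Since $\{\Fu_{\a} \otimes \Fu_{\b} \mid \a, \b \in \SP\}$ is a $\BZ$-basis of the free bimodule $\SH_q \otimes_{\BZ} \SH_q$ and $\{\Fv_{(\a;\b)}\}$ was just shown to be a $\BZ$-basis of $\SM_q$, the map $\Phi$ is a $\BZ$-linear bijection, hence an isomorphism of $\SH_q$-bimodules.

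The argument is essentially formal once Proposition 4.2 is granted; I do not anticipate any real obstacle, the crux of the work having been absorbed into that proposition. The only point requiring mild care is the identification of $\SP^{(2)}$ with pairs $(\a;\b)$ of ordinary partitions used implicitly in the final step, which is immediate from the definition of a double partition.
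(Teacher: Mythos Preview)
Your proposal is correct and follows essentially the same approach as the paper: both deduce (4.5.1) directly from (4.4.1), and both invoke the unitriangularity of $(g^{\Bla}_{\Bmu}(t))$ from Proposition 4.2 to conclude that $\{\Fv_{\Bmu}\}$ is a basis. The paper's proof is terser and leaves the free bimodule assertion as an evident consequence of the basis statement, whereas you spell it out via the explicit bimodule map $\Phi: \SH_q \otimes_{\BZ} \SH_q \to \SM_q$; this is a harmless elaboration rather than a different argument.
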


\begin{proof}
(4.5.1) follows from  (4.4.1).  $\SM_q^n$ is a free $\BZ$-module 
with rank $|\SP_{n,2}|$.  By Proposition 4.2, 
$(g^{\Bla}_{\Bmu}(q))_{\Bla, \Bmu \in \SP_{n,2}}$
is a unitriangular matrix with respect to a certain total order on $\SP_{n,2}$.   
Thus $\{ \Fv_{\Bmu} \mid \Bmu \in \SP_{n,2}\}$ gives rise to a basis of $\SM_q^n$.
\end{proof}

\para{4.6.}
Recall that $\Xi = \vL(x^{(1)}) \otimes \vL(x^{(2)})$, and 
$\Xi[t] = \vL(x^{(1)})[t]\otimes_{\BZ[t]}\vL(x^{(2)})[t]$.
Thus $\Xi[t]$ is regarded as a free $\vL[t]$-bimodule of rank 1 
($\vL = \vL(y)$ acts on $\vL(x^{(1)})$ by replacing $y$ by $x^{(1)}$, and 
so on for $\vL(x^{(2)})$). 
It is known by [M, III, (3.4)] that the map 
$\Fu_{\a} \mapsto t^{-n(\a)}P_{\a}(y;t\iv)$ gives an isomorphism 
of rings $\SH \otimes \BZ[t,t\iv] \isom  \vL\otimes \BZ[t,t\iv]$.  
This induces an isomorpism $\SH_q \otimes \BQ \isom \vL_{\BQ}$. 
We define a map $\Psi: \SM_{q^2} \otimes \BQ \to \Xi_{\BQ}$ by 
\begin{equation*}
\tag{4.6.1}
\Fv_{\Bmu} \mapsto  
   q^{-a(\Bmu)}P_{\mu'}(x^{(1)}, q^{-2})P_{\mu''}(x^{(2)}, q^{-2})
       = q^{-a(\Bmu)}R_{\Bmu}(x;q\iv)
\end{equation*}  
for $\Bmu = (\mu', \mu'') \in \SP^{(2)}$. 
Then it is clear that $\Psi$ gives an isomorphism 
$\SM_{q^2} \otimes \BQ \isom \Xi_{\BQ}$ of bimodules (under the isomorphism 
$\SH_{q^2} \otimes \BQ \isom \vL_{\BQ}$).
\par
By making use of (4.2.1), the formula (4.5.1) can be rewritten as 
\begin{equation*}
q^{a(\Bmu)}\Fv_{\Bmu} = \sum_{\Bla \in \SP_{n,2}}h^{\Bla}_{\Bmu}(q\iv)q^{a(\Bla)}\Fu_{\Bla},
\end{equation*}
where $\Fv_{\Bmu}, \Fu_{\Bla} \in \SM_{q^2}$. 
Since $(h^{\Bla}_{\Bmu}(q))_{\Bla, \Bmu \in \SP_{n,2}}$ is the transition matrix between 
the bases $\{R_{\Bmu}(x;q) \}$ and $\{ P_{\Bla}(x;q) \}$ of $\Xi^n_{\BQ}$, we see that 
\begin{equation*}
\tag{4.6.2}
\Psi(\Fu_{\Bla}) = q^{-a(\Bla)}P_{\Bla}(x;q\iv). 
\end{equation*}   
\par
For given $\Bla, \Bmu \in \SP^{(2)}$, $\a \in \SP$, we define polynomials 
$H^{\Bla}_{\a,\Bmu}(t), H^{\Bla}_{\Bmu,\a}(t) \in \BZ[t]$ by 

\begin{align*}
P_{\a}(x^{(1)};t^2)P_{\Bmu}(x;t) &= \sum_{\Bla \in \SP_{n,2}}H^{\Bla}_{\a,\Bmu}(t)P_{\Bla}(x;t), \\
P_{\Bmu}(x;t)P_{\a}(x^{(2)};t^2) &= \sum_{\Bla \in \SP_{n,2}}H^{\Bla}_{\Bmu,\a}(t)P_{\Bla}(x;t).
\end{align*}
where $n = |\a| + |\Bmu|$. 
Considering $\Psi\iv$, and by comparing (4.3.2) and (4.3.3), we have the following formula; 
for $\Bla, \Bmu \in \SP^{(2)}, \a \in \SP$, 

\begin{align*}
\tag{4.6.3}
G^{\Bla}_{\a,\Bmu}(q^2) &= q^{a(\Bla) - a(\Bmu) - 2n(\a)}H^{\Bla}_{\a,\Bmu}(q\iv), \\
\tag{4.6.4}
G^{\Bla}_{\Bmu,\a}(q^2) &= q^{a(\Bla) - a(\Bmu) - 2n(\a)}H^{\Bla}_{\Bmu,\a}(q\iv).
\end{align*}
The following result can be compared with that of the mirabolic Hall bimodule in [FGT, \S 4].

\begin{thm}  
Assume that $\Bla, \Bmu \in \SP^{(2)}, \a \in \SP$.  
\begin{enumerate}
\item
There exist polynomials 
$G^{\Bla}_{\a,\Bmu}, G^{\Bla}_{\Bmu,\a} \in \BZ[t]$ such that
$G^{\Bla}_{\a,\Bmu}(q) = G^{\Bla}_{\a,\Bmu}|_{t = q}$, 
$G^{\Bla}_{\Bmu,\a}(q) = G^{\Bla}_{\Bmu,\a}|_{t = q}$. 
Thus one can define a $\SH_t$-bimodule structure for the free $\BZ[t]$-module 
$\SM_t = \bigoplus_{\Bla \in \SP^{(2)}}\BZ[t]\Fu_{\Bla}$ by extending (4.3.2) and 
(4.3.3), 
where $\SH_t$ denotes the Hall algebra $\SH$ over $\BZ[t]$. 
\item
$\SM_t$ is a free $\SH_t$-bimodule of rank 1, with the basis $\Fu_0$.
More precisely, let $\{ \Fu_{\a} \mid \a \in \SP\}$ be the basis of $\SH_t$.
Then $\{ \Fu_{\mu'}\Fu_0\Fu_{\mu''} \mid (\mu'; \mu'') \in \SP^{(2)}\}$ gives 
a basis of $\SM_t$.  
For any $\Bmu = (\mu';\mu'') \in \SP_{n,2}$, we have
\begin{equation*}
\Fu_{\mu'}\Fu_0\Fu_{\mu''} = \sum_{\Bla \in \SP_{n,2}}g^{\Bla}_{\Bmu}(t)\Fu_{\Bla}.
\end{equation*}
\item
The map $\Psi: \Fu_{\Bla} \mapsto t^{-a(\Bla)}P_{\Bla}(x;t\iv)$ gives an isomorphism 
\begin{equation*}
\SM_{t^2}\otimes_{\BZ[t^2]}\BZ[t,t\iv] \isom \Xi\otimes\BZ[t, t\iv]
\end{equation*} 
as bimodules (under the isomorphism 
$\SH_{t^2}\otimes_{\BZ[t^2]}\BZ[t, t\iv] \simeq \vL\otimes \BZ[t, t\iv]$).
\end{enumerate}
\end{thm}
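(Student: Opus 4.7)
The plan is to prove the three assertions in order, exploiting the isomorphism $\SH \otimes \BZ[t,t\iv] \simeq \vL \otimes \BZ[t,t\iv]$ recalled in 4.6 and the polynomials $H^{\Bla}_{\a,\Bmu}(t), H^{\Bla}_{\Bmu,\a}(t) \in \BZ[t]$ already identified there as structure constants in the $\vL[t]$-bimodule $\Xi[t]$. The central idea is that all of the required identities, in particular (4.4.1), (4.5.1), (4.6.3), (4.6.4), hold at each specialization $t = q$ with $q$ a prime power, so polynomiality in $q$ will allow us to lift them to identities in $\BZ[t]$.

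For part (i) I will establish that $G^{\Bla}_{\a,\Bmu}(q)$ and $G^{\Bla}_{\Bmu,\a}(q)$ are polynomial in $q$ with integer coefficients. The cleanest route is a direct stratification of the varieties $\SG^{\Bla}_{\a,\Bmu}$ and $\SG^{\Bla}_{\Bmu,\a}$ analogous to the classical proof of polynomiality for the Hall polynomials $g^{\la}_{\mu\nu}(q)$ in [M, II.4] and the enhanced version of Proposition 3.2; alternatively, one may start from (4.6.3), (4.6.4) and perform a degree-and-parity analysis of the coefficients of $H^{\Bla}_{\a,\Bmu}$ and $H^{\Bla}_{\Bmu,\a}$ to verify that the right-hand sides define polynomials in $q^2$. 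Once polynomiality is known, we denote the resulting elements by $G^{\Bla}_{\a,\Bmu}(t), G^{\Bla}_{\Bmu,\a}(t) \in \BZ[t]$. The associativity relations established in 4.3 by the two-ways-of-counting arguments for $|\SG^{\Bla}_{\b,\g;\Bmu}(\BF_q)|$ and $|\SG^{\Bla}_{\a;\Bmu;\b}(\BF_q)|$ are identities of integer-valued polynomials in $q$ and hence extend to polynomial identities in $\BZ[t]$, so the formulas (4.3.2), (4.3.3) upgraded to $\BZ[t]$ endow $\SM_t$ with the structure of an $\SH_t$-bimodule.

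Part (ii) then follows from (4.4.1) read as a polynomial identity: for each $\Bmu = (\mu';\mu'') \in \SP_{n,2}$ one has $\Fu_{\mu'}\Fu_0\Fu_{\mu''} = \sum_{\Bla \in \SP_{n,2}} g^{\Bla}_{\Bmu}(t)\Fu_{\Bla}$ in $\SM_t$. Setting $\Fv_{\Bmu} = \Fu_{\mu'}\Fu_0\Fu_{\mu''}$, Proposition 4.2 asserts that the matrix $(g^{\Bla}_{\Bmu}(t))$ is lower unitriangular with respect to the fixed total order on $\SP_{n,2}$, so the $\Fv_{\Bmu}$ form a $\BZ[t]$-basis of $\SM_t^n$; this identifies $\SM_t$ as a free $\SH_t$-bimodule of rank one generated by $\Fu_0$. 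For part (iii), I will define $\Psi$ on the $\BZ[t,t\iv]$-basis $\{\Fu_{\Bla}\}$ by the stated formula, check bijectivity from the fact that $\{t^{-a(\Bla)}P_{\Bla}(x;t\iv)\}$ is a $\BZ[t,t\iv]$-basis of $\Xi \otimes \BZ[t,t\iv]$ by Proposition 1.7, and verify bimodule-compatibility by tracking $\Fv_{\Bmu}$: one computes $\Psi(\Fv_{\Bmu}) = t^{-a(\Bmu)}R_{\Bmu}(x;t\iv)$ using (4.2.1), and then deduces compatibility of the left and right $\SH_{t^2}$-actions from the Hall algebra isomorphism, since it suffices to check on the generators $\Fu_0$ and $\Fu_{\a}$ (the action on $\Fv_{\Bmu}$ being left/right multiplication by $\Fu_{\a}$, which corresponds on the $\Xi$ side to multiplication by $P_{\a}(x^{(i)}; t^{-2})$ on the relevant set of variables). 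The main obstacle is the polynomiality claim in (i): the parity and positivity conditions implicit in (4.6.3), (4.6.4) need to be verified carefully, and the most transparent route probably proceeds through the geometric stratification rather than through formal manipulation of the $H$-polynomials.
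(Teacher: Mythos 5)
Your overall architecture matches the paper's: reduce everything to the polynomiality claim in (i) via (4.6.3) and (4.6.4), then get (ii) from (4.4.1) together with the unitriangularity of $(g^{\Bla}_{\Bmu}(t))$ from Proposition 4.2, and (iii) from the computation of $\Psi(\Fv_{\Bmu})$ and $\Psi(\Fu_{\Bla})$ already carried out in 4.6. Parts (ii) and (iii) are fine as you present them, conditional on (i).

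The gap is that you never actually prove (i), and (i) is the entire content of the theorem. What must be shown is that $t^{a(\Bla)-a(\Bmu)-2n(\a)}H^{\Bla}_{\a,\Bmu}(t\iv)$ and $t^{a(\Bla)-a(\Bmu)-2n(\a)}H^{\Bla}_{\Bmu,\a}(t\iv)$ lie in $\BZ[t^2]$. You offer two routes and execute neither. The route you prefer --- a direct geometric stratification of $\SG^{\Bla}_{\a,\Bmu}$ --- is not obviously available: unlike the varieties in Proposition 3.2, here the subspace $W$ need not contain $v$, so one is counting $x$-stable subspaces for which the \emph{pair} $(x|_{V/W}, v \bmod W)$ has a prescribed type, and this is not a situation covered by the classical argument of [M, II.4] nor by Achar--Henderson's result; asserting that an "analogous" stratification works is precisely the point that needs proof. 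The alternative route you mention ("a degree-and-parity analysis") is the one the paper takes, but it is not a routine verification: it requires (a) the explicit decomposition $H^{\Bla}_{\a,\Bmu}(t) = \sum_{\Bnu,\xi} h'_{\Bmu,\Bnu}(t) f^{\xi}_{\a,\nu'}(t^2) h^{\Bla}_{(\xi;\nu'')}(t)$ obtained by expanding $P_{\Bmu}$ in the $R_{\Bnu}$ basis, multiplying by $P_{\a}(x^{(1)};t^2)$, and re-expanding; (b) the fact that the inverse of the unitriangular matrix $\bigl(t^{a(\Bla)-a(\Bmu)}h^{\Bla}_{\Bmu}(t\iv)\bigr)$ again has entries in $\BZ[t^2]$, which gives the needed containment for the $h'_{\Bmu,\Bnu}$; and (c) the degree bookkeeping using $a((\xi;\nu'')) - a(\Bnu) = 2n(\xi) - 2n(\nu')$ and $f^{\xi}_{\a,\nu'}(t^{-2}) \in t^{2n(\a)+2n(\nu')-2n(\xi)}\BZ[t^2]$ from (3.3.4). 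None of (a)--(c) appears in your proposal, and your closing sentence concedes that this verification is the "main obstacle" while still leaving it undone. As written, the proposal is an outline with the critical step missing, not a proof.
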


\begin{proof}
In view of (4.6.3) and (4.6.4), 
what we need to show is, for $\Bla, \Bmu \in \SP^{(2)}, \a \in \SP$,  
\begin{align*}
\tag{4.7.1}
&t^{a(\Bla) - a(\Bmu) - 2n(\a)}H^{\Bla}_{\a,\Bmu}(t\iv) \in \BZ[t^2], \\ 
\tag{4.7.2}
&t^{a(\Bla) - a(\Bmu) - 2n(\a)}H^{\Bla}_{\Bmu, \a}(t\iv) \in \BZ[t^2].
\end{align*}
All other assertions follow from the discussion in 4.6.
By (4.2.1), we see that $t^{a(\Bla) - a(\Bmu)}h^{\Bla}_{\Bmu}(t\iv) \in \BZ[t^2]$.
The matrix $H(t\iv) = (h^{\Bla}_{\Bmu}(t\iv))$ is unitriangular. Let $D(t)$ be the diagonal matrix
such that the $\Bla\Bla$-entry is $t^{a(\Bla)}$.  Then the matrix 
$(t^{a(\Bla) - a(\Bmu)}h^{\Bla}_{\Bmu}(t\iv))$ coincides with $D(t)\iv H(t\iv)D(t)$.
This matrix is also unitriangular. It follows that each entry of its inverse matrix is 
contained in $\BZ[t^2]$. Let $H(t\iv)\iv = (h'_{\Bmu,\Bnu}(t\iv))$ be the inverse matrix 
of $H(t\iv)$.  Then $t^{a(\Bnu)-a(\Bmu)}h'_{\Bmu,\Bnu}(t\iv) \in \BZ[t^2]$. 
Note that $H(t)$ is the transition matrix between the bases $\{ R_{\Bmu} \}$ and $\{ P_{\Bla} \}$. 
Hence $H(t)\iv$ is the transition matrix between the bases $\{ P_{\Bmu}\}$ and $\{ R_{\Bnu} \}$. 
One can write 
\begin{equation*}
P_{\Bmu}(x;t) = \sum_{\Bnu = (\nu',\nu'') \in \SP^{(2)}}h'_{\Bmu,\Bnu}(t)
        P_{\nu'}(x^{(1)};t^2)P_{\nu''}(x^{(2)};t^2).
\end{equation*}   
Since
\begin{equation*}
P_{\a}(x^{(1)};t^2)P_{\nu'}(x^{(1)};t^2) = \sum_{\xi \in \SP}
          f^{\xi}_{\a,\nu'}(t^2)P_{\xi}(x^{(1)};t^2),
\end{equation*}
we have 

\begin{align*}
P_{\a}(x^{(1)};t^2)P_{\Bmu}(x;t) &= \sum_{\Bnu \in \SP^{(2)}}h'_{\Bmu,\Bnu}(t)
          \sum_{\xi \in \SP}f^{\xi}_{\a,\nu'}(t^2)P_{\xi}(x^{(1)};t^2)P_{\nu''}(x^{(2)};t^2) \\
       &= \sum_{\Bnu, \xi}h'_{\Bmu,\Bnu}(t)f^{\xi}_{\a,\nu'}(t^2)
            \sum_{\Bla \in \SP^{(2)}}h^{\Bla}_{(\xi;\nu'')}(t)P_{\Bla}(x;t). 
\end{align*} 
It follows that 
\begin{equation*}
\tag{4.7.3}
H^{\Bla}_{\a,\Bmu}(t) = \sum_{\Bnu, \xi}h'_{\Bmu,\Bnu}(t)f^{\xi}_{\a,\nu'}(t^2)
           h^{\Bla}_{(\xi;\nu'')}(t).
\end{equation*}
Here $h'_{\Bmu,\Bnu}(t\iv) \in t^{a(\Bmu)-a(\Bnu)}\BZ[t^2]$ and 
$h^{\Bla}_{(\xi,\nu'')}(t\iv) \in t^{a((\xi;\nu'')) - a(\Bla)}\BZ[t^2]$.
Moreover, by (3.3.4), $f^{\xi}_{\a,\nu'}(t^{-2}) \in t^{2n(\a) + 2n(\nu')-2n(\xi)}\BZ[t^2]$. 
Since $a((\xi;\nu'')) = 2n(\xi) + 2n(\nu'') + |\nu''|$ and 
$a(\Bnu) = 2n(\nu') + 2n(\nu'') + |\nu''|$, we see that 
$H^{\Bla}_{\a,\Bmu}(t\iv) \in t^{a(\Bmu) + 2n(\a) - a(\Bla)}\BZ[t^2]$.   
This proves (4.7.1).  The proof of (4.7.2) is similar. 
\end{proof}
\par\bigskip
\begin{center}
{\sc Appendix  \ Tables of double Kostka polynomials }
\end{center}

\par\medskip
Let $K(t) = (K_{\Bla,\Bmu}(t))_{\Bla, \Bmu \in \SP_{n,2}}$ be the matrix of double Kostka 
polynomials. We give the table of matrices $K(t)$ for $2 \le n \le 5$.  
In the table below, we use the following notation; 
we denote the double partition $(\la;\mu)$ with 
$\la = (\la_1^{m_1}, \dots, \la_k^{m_k}), \mu = (\mu_1^{n_1}, \dots, \mu_{k'}^{n_{k'}})$ 
by 
$\la_1^{m_1}\cdots\la_k^{m_k}.\mu_1^{n_1}\cdots\mu_{k'}^{n_{k'}}$.  For example, 
\begin{equation*}
(21^2;3^2) \lra 21^2.3^2 \quad (32;-) \lra 32. \quad (-;21^2) \lra .21^2
\end{equation*}
and so on.  

\vspace*{1cm}
\begin{table}[h]
\caption{$K(t)$ for $n = 2$}
\label{B_2: B_2}
\begin{center}
\begin{tabular}{|c|ccccc|}
\hline
        &  $2.$  &  $1.1$  &  $.2$  &  $1^2.$  &  $.1^2$  \\
\hline 
 $2.$   &  1 &  $t$ &  $t^2$  & $t^2$ &  $t^4$ \\
 $1.1$  &    &  1   &  $t$    & $t$   &  $t^3 + t$ \\
 $.2  $ &    &      &  1      &       &  $t^2$   \\
 $1^2.$ &    &      &         &  1    &  $t^2$   \\
 $.1^2$ &    &      &         &       &   1  \\
\hline
\end{tabular}
\end{center}
\end{table}

\vspace{1cm}

\begin{table}[h]
\caption{$K(t)$ for $n = 3$}
\label{C_3: C_3}
\begin{center}
\begin{tabular}{|c|cccccccccc|}
\hline
      &    $3.$  &  $2.1$  &  $1.2$  &  $21.$  &  $1^2.1$  &  $.3$  &  $1.1^2$  &  $.21$  
                           &  $1^3.$  &  $.1^3$  \\ 
\hline 
 $3.$  &  1 &  $t$ &  $t^2$  & $t^2$ &  $t^3$  &   $t^3$  &  $t^4$    &   $t^5$  &  $t^6$  & $t^9$ \\
 $2.1$ &    &  1   &  $t$    & $t$   &  $t^2$  &   $t^2$  &  $t^3 + t$&   $t^4 + t^2$ & $t^5 + t^3$ 
                                                                      &  $t^8 + t^6 + t^4$   \\
 $1.2$ &    &      &  1      &       &  $t$    &   $t$    &  $t^2$     &  $t^3 + t$  &  $t^4$  
                                                                      &  $t^7 + t^5 + t^3$      \\
 $21.$ &    &      &         &  1    &  $t$    &          &  $t^2$     &  $t^3$  &  $t^4 + t^2$ 
                                                                      & $t^7 + t^5$    \\
 $1^2.1$ &    &      &         &       &   1     &          &  $t$  &  $t^2$  &  $t^3 + t$
                                                                           & $t^6 + t^4 + t^2$    \\
 $.3 $ &    &      &         &       &         &   1      &       & $t^2$  &     &   $t^6$  \\ 
 $1.1^2$ &    &      &         &       &         &          &   1   & $t$    &  $t^2$  
                                                                           & $t^5 + t^3 + t$ \\
 $.21$ &    &      &         &       &         &          &       &  1     &     &  $t^4 + t^2$ \\
 $1^3.$ &    &      &         &       &         &          &       &        &  1  &  $t^3$  \\
 $.1^3$ &    &      &         &       &         &          &       &        &     &   1   \\ 
\hline
\end{tabular}
\end{center}
\end{table}
\vspace{1cm}
\pagebreak

\footnotesize
\begin{table}[h]
\caption{$K(t)$ for $n = 4$}
\label{C_4: C_4}
\begin{center}
\hspace*{-0.1cm}
\begin{tabular}{|c|cccccccccccccc|}
\hline
      &  $4.$ &  $3.1$ &  $31.$ &  $2.2$ &  $21.1$ & $1.3$ &  $2.1^2$ &  
          $1^2.2$ &  $2^2.$ &  $1.21$ &  $.4$  &  $21^2.$ & $1^2.1^2$ & .31  \\
\hline 
$4.$ &  1  &  $t$ & $t^2$ & $t^2$ & $t^3$ & $t^3$ & $t^4$ & $t^4$ & $t^4$ & $t^5$ 
                                       & $t^4$  &  $t^6$  & $t^6$ &  $t^6$ \\ 
$3.1$ &     &   1  & $t$   & $t$   & $t^2$ & $t^2$ & $t^3 + t$ & $t^3$ & $t^3$ & $t^4 + t^2$ 
                                       & $t^3$  &  $t^5 + t^3$ & $t^5 + t^3$ & $t^5 + t^3$   \\ 
$31.$ &     &      &  1    &       & $t$   &       & $t^2$ & $t^2$  & $t^2$  &  $t^3$ 
                                       &       &   $t^4 + t^2$  & $t^4$ & $t^4$   \\ 
$2.2$ &     &      &       &   1   & $t$   & $t$   & $t^2$ & $t^2$  &  $t^2$ &  $t^3 + t$  
                                       &  $t^2$  &  $t^4$  & $t^4 + t^2$ &  $t^4 + t^2$       \\
$21.1$ &     &      &       &       &   1   &       & $t$   & $t$    &  $t$   & $t^2$  
                                       &       &  $t^3 + t$ & $t^3 + t$  &  $t^3$ \\  
$1.3$  &     &      &       &       &       &   1    &     &  $t$ &     &  $t^2$    
                                        &  $t$  &    &  $t^3$  &  $t^3 + t$ \\  
$2.1^2$ &    &      &       &       &       &        &   1 &      &     &  $t$  
                                        &       &   $t^2$   &  $t^2$  &  $t^2$ \\
$1^2.2$ &    &      &       &       &       &        &     &  1   &     &  $t$
                                        &       &           &  $t^2$  & $t^2$ \\
$2^2.$  &    &      &       &       &       &        &     &      &   1  &
                                        &       &    $t^2$   &  $t^2$  &   \\
$1.21$  &    &      &       &       &       &        &     &      &      &  1
                                        &       &            &   $t$  &  $t$  \\
$.4$    &    &      &       &       &       &        &     &      &      &
                                        &  1    &            &     & $t^2$    \\
$21^2.$  &   &      &       &       &       &        &     &      &      &
                                        &       &   1        &    &       \\  
$1^2.1^2$ &  &      &       &       &       &        &     &      &      &
      
                                        &       &            &   1   &     \\ 
$.31$    &   &      &       &       &       &        &     &       &      &
                                        &       &             &      &  1  \\
$1^3.1$  &&&&&&&&&&  &&&&  \\ 
$.2^2$   &&&&&&&&&&  &&&&   \\
$1.1^3$  &&&&&&&&&&  &&&&   \\
$.21^2$  &&&&&&&&&&  &&&&   \\
$1^4.$   &&&&&&&&&&  &&&&    \\
$.1^4$   &&&&&&&&&&  &&&&    \\
\hline 
\end{tabular}
\end{center}
\vspace{2cm}


\begin{center}
\hspace*{-0.7cm}
\begin{tabular}{|c|cccccc|}
\hline
         & $1^3.1$ & $.2^2$ & $1.1^3$ & $.21^2$
         &  $1^4.$ & $.1^4$  \\ 
\hline
$4.$  &  $t^7$ & $t^8$  &  $t^9$  &  $t^{10}$  & $t^{12}$  & $t^{16}$  \\
$3.1$ &  $t^6 + t^4$ &  $t^7 + t^5$
                           &  $t^8 + t^6 + t^4$  &  $t^9 + t^7 + t^5$  & $t^{11} + t^9 + t^7$  
                                  & $t^{15} + t^{13} + t^{11} + t^9$  \\
$31.$ &  $t^5 + t^3$  &  $t^6$  &  $t^7 + t^5$  &  $t^8 + t^6$ &  $t^{10} + t^8 + t^6$
                                  &  $t^{14} + t^{12} + t^{10}$  \\ 
$2.2$ &  $t^5 + t^3$ & $t^6 + t^4 + t^2$ & $ t^7 + t^5 + t^3$ & $t^8 + t^6 + 2t^4$
                           &$t^{10} + t^8 + t^6$ & $t^{14} + t^{12} + 2t^{10} + t^8 + t^6$ \\        
$21.1$ & $t^4 + 2t^2$ & $t^5 + t^3$ & $t^6 + 2t^4 + t^2$ & $t^7 + 2t^5 + t^3$ & 
                           $t^9 + 2t^7 + 2t^5 + t^3$ & $t^{13} + 2t^{11} + 2t^9 + 2t^7 + t^5$ \\   
$1.3$  & $t^4$  &  $t^5 + t^3$ &  $t^6$ &  $t^7 + t^5 + t^3$ &  $t^9$  & 
                            $t^{13} + t^{11} + t^9 + t^7$ \\  
$2.1^2$ &  $t^3$  &  $t^4$  &  $t^5 + t^3 + t$ &  $t^6 + t^4 + t^2$  &  $t^8 + t^6 + t^4$  & 
                            $t^{12} + t^{10} + 2t^8 + t^6 + t^4$ \\
$1^2.2$ & $t^3 + t$ & $t^4$  &  $t^5 + t^3$  &  $t^6 + t^4 + t^2$  &  $t^8 + t^6 + t^4$  &
                            $t^{12} + t^{10} + 2t^8 + t^6 + t^4$  \\
$2^2.$  & $t^3$  &  $t^4$  &  $t^5$   &  $t^6$   &  $t^8 + t^4$  &  $t^{12} + t^8$ \\ 
$1.21$  & $t^2$  &  $t^3 + t$  &  $t^4 + t^2$  &  $t^5 + 2t^3 + t$  &  $t^7 + t^5$   &
                             $t^{11} + 2t^9 + 2t^7 + 2t^5 + t^3$  \\
$.4$      &       &   $t^4$  &      &   $t^6$  &    &  $t^{12}$  \\
$21^2.$   &  $t$  &    &  $t^3$  &  $t^4$  &  $t^6 + t^4 + t^2$  &   $t^{10} + t^8 + t^6$ \\
$1^2.1^2$ &  $t$  &  $t^2$  &  $t^3 + t$ &  $t^4 + t^2$  &  $t^6 + t^4 + t^2$ & 
                              $t^{10} + t^8 + 2t^6 + t^4 + t^2$ \\
$.31$  &    &  $t^2$  &    &  $t^4 + t^2$  &     &  $t^{10} + t^8 + t^6$ \\ 
$1^3.1$ & 1  &    &  $t^2$  &  $t^3$  &  $t^5 + t^3 + t$ &  $t^9 + t^7 + t^5 + t^3$ \\
$.2^2$  &    &  1 &    &  $t^2$  &    &  $t^8 + t^4$ \\
$1.1^3$  &   &    &  1  &  $t$  &  $t^3$  &  $t^7 + t^5 + t^3 + t$ \\
$.21^2$  &   &    &     &  1    &      &   $t^6 + t^4 + t^2$  \\
$1^4.$   &   &    &     &       &   1  &   $t^4$  \\
$.1^4$   &   &    &     &       &      &    1    \\
\hline
\end{tabular}
\end{center}
\end{table}


\newpage
\normalsize
\begin{center}
{\sc Table 4.} $K(t)$ for $n = 5$ 
\end{center}
\footnotesize 
\par

\begin{sideways}
\begin{tabular}{|c|ccccccccccccccccccc|}
\hline
           &  $5.$     &  $4.1$  &  $3.2$   &  $41.$    &  $2.3$    
           &  $31.1$   &  $1.4$  &  $21.2$  &  $3.1^2$  &  $32.$  
           &  $1^2.3$  &  $2.21$ &  $2^2.1$ &  $1.31$   &  $21.1^2$  
           &  $31^2.$  &  $1^2.21$  &  $21^2.1$  &  $.5$   \\  
\hline 
    $5.$   &  1        &  $t$    &  $t^2$   &  $t^2$    &  $t^3$ 
           &  $t^3$    &  $t^4$  &  $t^4$   &  $t^4$    &  $t^4$ 
           &  $t^5$    &  $t^5$  &  $t^5$   &  $t^6$    &  $t^6$  
           &  $t^6$    &  $t^7$  &  $t^7$   &  $t^5$  \\  
    $4.1$  &           &  1           &  $t$     &  $t$         &  $t^2$ 
           &  $t^2$    &  $t^3$       &  $t^3$   &  $t^3 + t$   &  $t^3$
           &  $t^4$    &  $t^4 + t^2$ &  $t^4$   &  $t^5 + t^3$ &  $t^5 + t^3$  
           &  $t^5 + t^3$ &  $t^6 + t^4$  &  $t^6 + t^4$  &  $t^4$  \\ 
    $3.2$  &           &              &  1       &              &  $t$   
           &  $t$      &  $t^2$         &  $t^2$   &  $t^2$       &  $t^2$
           &  $t^3$    &  $t^3 + t$   &  $t^3$   &  $t^4 + t^2$ &  $t^4 + t^2$   
           &  $t^4$    &  $t^5 + t^3$ &  $t^5 + t^3$  &  $t^3$     \\
    $41.$  &           &              &          &  1           &  
           &  $t$      &              &  $t^2$   &  $t^2$       &  $t^2$
           &  $t^3$    &  $t^3$       &  $t^3$   &  $t^4$       &  $t^4$  
           &  $t^4 + t^2$ &  $t^5$    &  $t^5 + t^3$  &      \\
    $2.3$  &           &              &          &              &   1  
           &           &  $t$         &  $t$     &              &  
           &  $t^2$    &  $t^2$       &  $t^2$   &  $t^3 + t$   &  $t^3$   
           &           &  $t^4 + t^2$ &  $t^4$   &   $t^2$   \\
    $31.1$ &           &              &          &              &   
           &  1        &              &  $t$     &  $t$         &  $t$  
           &  $t^2$    &  $t^2$       &  $t^2$   &  $t^3$       &  $t^3 + t$  
           &  $t^3 + t$&  $t^4 + t^2$ &  $t^4 + 2t^2$   &     \\
    $1.4$  &           &              &          &              &
           &           &  1           &          &              &
           &  $t$      &              &          &  $t^2$       &   
           &           &  $t^3$       &          &  $t$   \\
    $21.2$ &           &              &          &              &
           &           &              &  1       &              &
           &  $t$      &  $t$         &  $t$     &  $t^2$       &    $t^2$  
           &           &  $t^3 + t$   &  $t^3 + t$   &      \\
   $3.1^2$ &           &              &          &              &
           &           &              &          &  1           &
           &           &  $t$         &          &  $t^2$       &  $t^2$ 
           &  $t^2$    &  $t^3$       &  $t^3$   &      \\
   $32.$   &           &              &          &              &
           &           &              &          &              &  1
           &           &              &   $t$    &              &   $t^2$  
           &  $t^2$    &  $t^3$       &   $t^3$  &        \\
   $1^2.3$ &           &              &          &              &
           &           &              &          &              &
           &  1        &              &          &   $t$        &  
           &           &  $t^2$       &          &    \\ 
   $2.21$  &           &              &          &              &
           &           &              &          &              &
           &           &  1           &          &   $t$        &   $t$ 
           &           &  $t^2$       &  $t^2$   &      \\ 
   $2^2.1$ &           &              &          &              &
           &           &              &          &              &
           &           &              &  1       &              &   $t$ 
           &           &  $t^2$       &  $t^2$   &      \\
   $1.31$  &           &              &          &              &
           &           &              &          &              &
           &           &              &          &  1           &     
           &           &   $t$        &          &     \\
 $21.1^2$  &           &              &          &              &
           &           &              &          &              &
           &           &              &          &              &   1  
           &           &   $t$        &   $t$    &      \\ 
 $31^2.$   &           &              &          &              &
           &           &              &          &              &
           &           &              &          &              &
           &   1       &              &  $t$     &      \\
 $1^2.21$  &           &              &          &              &
           &           &              &          &              &
           &           &              &          &              &
           &           &   1          &          &     \\
 $21^2.1$  &           &              &          &              &
           &           &              &          &              &
           &           &              &          &              &
           &           &              &   1      &     \\
 $.5$      &           &              &          &              &   
           &           &              &          &              &  
           &           &              &          &              &  
           &           &              &          &    1    \\ 
 $1.2^2$   &&&&&   &&&&&  &&&&&  &&&&  \\
 $2.1^3$   &&&&&   &&&&&  &&&&&  &&&&   \\
 $1^3.2$   &&&&&   &&&&&  &&&&&  &&&&   \\
 $2^21.$   &&&&&   &&&&&  &&&&&  &&&&   \\
 $.41$     &&&&&   &&&&&  &&&&&  &&&&   \\
 $1.21^2$  &&&&&   &&&&&  &&&&&  &&&&    \\
 $1^3.1^2$ &&&&&   &&&&&  &&&&&  &&&&    \\
 $.32$     &&&&&   &&&&&  &&&&&  &&&&    \\
 $21^3.$   &&&&&   &&&&&  &&&&&  &&&&    \\
 $1^2.1^3$ &&&&&   &&&&&  &&&&&  &&&&    \\
 $.31^2$   &&&&&   &&&&&  &&&&&  &&&&    \\
 $1^4.1$   &&&&&   &&&&&  &&&&&  &&&&   \\
 $.2^21$   &&&&&   &&&&&  &&&&&  &&&&  \\
 $1.1^4$   &&&&&   &&&&&  &&&&&  &&&&   \\
 $.21^3$   &&&&&   &&&&&  &&&&&  &&&&   \\
 $1^5.$    &&&&&   &&&&&  &&&&&  &&&&   \\
 $.1^5$    &&&&&   &&&&&  &&&&&  &&&&   \\
\hline
\end{tabular}
\end{sideways}

\begin{table}
\begin{sideways}
\begin{tabular}{|c|cccccccccc|}
\hline
           &  $1.2^2$   &  $2.1^3$    &  $1^3.2$    &  $2^21.$    
           &  $.41$      &  $1.21^2$  &  $1^3.1^2$  &  $.32$      &  $21^3.$  
           &  $1^2.1^3$  \\
\hline
 $5.$      &  $t^8$     &  $t^9$      &  $t^8$      &  $t^8$
           &  $t^7$      &  $t^{10}$  &  $t^{10}$   &  $t^9$      &  $t^{12}$
           &  $t^{11}$    \\  
 $4.1$     &  $t^7 + t^5$ &  $t^8 + t^6 + t^4$  &  $t^7 + t^5$  &  $t^7 + t^5$
           &  $t^6 + t^4$ &  $t^9 + t^7 + t^5$  &  $t^9 + t^7$  &  $t^8 + t^6$  &  $t^{11} + t^9 + t^7$
           &  $t^{10} + t^8 + t^6$   \\    
 $3.2$     &  $t^6 + t^4 + t^2$  &  $t^7 + t^5 + t^3$ & $t^6 + t^4$ & $t^6 + t^4$
           &  $t^5 + t^3$ &  $t^8 + t^6 + 2t^4$ &  $t^8 + t^6 + t^4$ & $t^7 + t^5 + t^3$ 
                                                                            & $t^{10} + t^8 + t^6$
           &  $t^9 + t^7 + 2t^5$     \\                
 $41.$     &  $t^6$              &  $t^7 + t^5$       &  $t^6 + t^4$  &  $t^6 + t^4$
           &  $t^5$       &  $t^8 + t^6$        &  $t^8 + t^6$       &  $t^7$   &  $ t^{10} + t^8 + t^6$
           &  $t^9 + t^7$    \\
 $2.3$     &  $t^5 + t^3$        &  $t^6$             &  $t^5 + t^3$  &   $t^5$
           &  $t^4 + t^2$ &  $t^7 + t^5 + t^3$  &  $t^7 + t^5$       &  $t^6 + t^4 + t^2$ &  $t^9$
           &  $t^8 + t^6 + t^4$   \\
 $31.1$    &  $t^5 + t^3$        &  $t^6 + 2t^4 + t^2$  &  $t^5 + 2t^3$  &  $t^5 + 2t^3$
           &  $t^4$       &  $t^7 + 2t^5 + t^3$ &  $t^7 + 2t^5 + t^3$  &  $t^6 + t^4$
                                                                          &  $t^9 + 2t^7 + 2t^5 + t^3$
           &  $t^8 + 2t^6 + 2t^4$  \\    
 $1.4$     &  $t^4$              &                      &  $t^4$         &
           &  $t^3 + t$          &  $t^6$               &  $t^6$         &   $t^5 + t^3$  
           &                     &  $t^7$   \\        
 $21.2$    &  $t^4 + t^2$        &  $t^5 + t^3$         &  $t^4 + 2t^2$  &   $t^4 + t^2$  
           &  $t^3$              &  $t^6 + 2t^4 + t^2$  &  $t^6 + 2t^4 + t^2  $ &  $t^5 + t^3$
           &  $t^8 + t^6 + t^4$  &  $t^7 + 2t^5 + 2t^3$  \\ 
 $3.1^2$   &  $t^4$              &  $t^5 + t^3 + t$     &  $t^4$         &  $t^4$ 
           &  $t^3$              &  $t^6 + t^4 + t^2$   &  $t^6$         & $t^5$  
           &  $t^8 + t^6 + t^4$  &  $t^7 + t^5 + t^3$     \\
 $32.$     &  $t^4$              &  $t^5$               &  $t^4$         &  $t^4 + t^2$
           &                     &  $t^6$               &  $t^6 + t^4$   &  $t^5$
           &  $t^8 + t^6 + t^4$  &  $t^7 + t^5$   \\
 $1^2.3$   &  $t^3$              &                      &  $t^3 + t$     &  
           &  $t^2$              &  $t^5 + t^3$         &  $t^5 + t^3$   &  $t^4$
           &                     &  $t^6 + t^4$     \\
 $2.21$    &  $t^3 + t$          &  $t^4 + t^2$         &  $t^3$         &  $t^3$  
           &  $t^2$              &  $t^5 + 2t^3 + t$    &  $t^5 + t^3$   &  $t^4 + t^2$ 
           &  $t^7 + t^5$        &  $t^6 + 2t^4 + t^2$   \\
 $2^2.1$   &  $t^3$              &  $t^4$               &  $t^3$         &  $t^3+ t$
           &                     &  $t^5$               &  $t^5 + t^3$   &  $t^4$
           &  $t^7 + t^5 + t^3$  &  $t^6 + t^4 + t^2$   \\
 $1.31$    &  $t^2$              &                      &  $t^2$         &
           &  $t$                &  $t^4 + t^2$         &  $t^4$         &  $t^3 + t$ 
           &                     &  $t^5 + t^3$    \\
 $21.1^2$  &  $t^2$              &  $t^3 + t$           &  $t^2$         &  $t^2$
           &                     &  $t^4 + t^2$         &  $t^4 + t^2$   &  $t^3$
           &  $t^6 + t^4 + t^2$  &  $t^5 + 2t^3 + t$   \\
 $31^2.$   &                     &  $t^3$               &  $t^2$         &  $t^2$
           &                     &  $t^4$               &  $t^4$         &  
           &  $t^6 + t^4 + t^2$  &  $t^5$    \\
 $1^2.21$  &  $t$                &                      &  $t$           &
           &                     &  $t^3 + t$           &  $t^3 + t$     &  $t^2$
           &                     &  $t^4 + 2t^2$    \\
 $21^2.1$  &                     &  $t^2$               &  $t$           &  $t$
           &                     &  $t^3$               &  $t^3 + t$     &  
           &  $t^5 + t^3 + t$    &  $t^4 + t^2$   \\
 $.5$      &                     &                      &                &
           &  $t^2$              &                      &                &   $t^4$
           &                     &   \\                     
 $1.2^2$   &  1                  &                      &                &
           &                     &   $t^2$              &  $t^2$         &   $t$
           &                     &   $t^3$   \\
 $2.1^3$   &                     &    1                 &                &
           &                     &    $t$               &                &
           &  $t^3$              &  $t^2$     \\
 $1^3.2$   &                     &                      &    1           &   
           &                     &  $t^2$               &   $t^2$        &
           &                     &  $t^3$   \\
 $2^21.$   &                     &                      &                &    1
           &                     &                      &   $t^2$        &
           &  $t^4 + t^2$        &  $t^3$   \\
 $.41$     &                     &                      &                &
           &   1                 &                      &                &   $t^2$
           &                     &     \\
 $1.21^2$  &                     &                      &                &
           &                     &      1               &                &
           &                     &  $t$  \\
 $1^3.1^2$ &                     &                      &                &
           &                     &                      &   1            &
           &                     &   $t$   \\
 $.32$     &                     &                      &                &
           &                     &                      &                &    1
           &                     &       \\
 $21^3.$   &                     &                      &                &
           &                     &                      &                &
           &  1                  &     \\
 $1^2.1^3$ &                     &                      &                &
           &                     &                      &                &
           &                     &    1    \\
 $.31^2$   &&&&  &&&&  &&   \\
 $1^4.1$   &&&&  &&&&  &&   \\
 $.2^21$   &&&&  &&&&  &&   \\
 $1.1^4$   &&&&  &&&&  &&    \\
 $.21^3$   &&&&  &&&&  &&    \\
 $1^5.$    &&&&  &&&&  &&     \\
 $.1^5$    &&&&  &&&&  &&   \\ 
\hline
\end{tabular}

\end{sideways}
\end{table}

\begin{table}
\vspace*{1cm}
\begin{sideways}
\begin{tabular}{|c|ccccc|}
\hline
           &  $.31^2$     &  $1^4.1$  &  $.2^21$   &  $1.1^4$    
           &  $.21^3$    \\
\hline
 $5.$      &  $t^{11}$     &  $t^{13}$    &  $t^{13}$    &  $t^{16}$
           &  $t^{17}$     \\       
 $4.1$     &  $t^{10} + t^8 + t^6$  &  $t^{12} + t^{10} + t^8$  &  $t^{12} + t^{10} + t^8$ 
                                                       &  $t^{15} + t^{13} + t^{11} + t^9$ 
           &  $t^{16} + t^{14} + t^{12} + t^{10}$   \\
 $3.2$     &  $t^9 + t^7 + 2t^5$  &  $t^{11} + t^9 + 2t^7$  & $t^{11} + t^9 + 2t^7 + t^5$
           &  $t^{14} + t^{12} + 2t^{10} + t^8 + t^6$  &  $t^{15} + t^{13} + 2t^{11} + 2t^9 + t^7$  \\
 $4.1$     &  $t^9 + t^7$                          &  $t^{11} + t^9 + t^7$  
           &  $t^{11} + t^9$                       &  $t^{14} + t^{12} + t^{10}$
           &  $t^{15} + t^{13} + t^{11}$           \\
 $41.$     &  $t^9 + t^7$                          &  $t^{11} + t^9 + t^7$  
           &  $t^{11} + t^9$                       &  $t^{14} + t^{12} + t^{10}$ 
           &  $t^{15} + t^{13} + t^{11}$           \\
 $2.3$     &  $t^8 + t^6 + 2t^4$                   &  $t^{10} + t^8 + t^6$  
           &  $t^{10} + t^8 + 2t^6 + t^4$          &  $t^{13} + t^{11} + t^9 + t^7$
           &  $t^{14} + t^{12} + 2t^{10} + 2t^8 + t^6$  \\    
 $31.1$    &  $t^8 + 2t^6 + t^4$                   &  $t^{10} + 2t^8 + 3t^6 + t^4$
           &  $t^{10} + 2t^8 + 2t^6$               &  $t^{13} + 2t^{11} + 3t^9 + 2t^7 + t^5$
           &  $t^{14} + 2t^{12} + 3t^{10} + 2t^8  + t^6$   \\
 $1.4$     &  $t^7 + t^5 + t^3$                    &  $t^9$
           &  $t^9 + t^7 + t^5$                    &  $t^{12}$  
           &  $t^{13} + t^{11} + t^9 + t^7$    \\
 $21.2$    &  $t^7 + 2t^5 + t^3$                   &  $t^9 + 2t^7 + 3t^5 + t^3$ 
           &  $t^9 + 2t^7 + 2t^5 + t^3$            &  $t^{12} + 2t^{10} + 3t^8 + 2t^6 + t^4$
           &  $t^{13} + 2t^{11} + 3t^9 + 3t^7 + 2t^5$   \\ 
 $3.1^2$   &  $t^7 + t^5 + t^3$                    &  $t^9 + t^7 + t^5$  
           &  $t^9 + t^7 + t^5$                    &  $t^{12} + t^{10} + 2t^8 + t^6 + t^4$
           &  $t^{13} + t^{11} + 2t^9 + t^7 + t^5$  \\
 $32.$     &  $t^7$                                &  $t^9 + t^7 + t^5$
           &  $t^9 + t^7$                          &  $t^{12} + t^{10} + t^8$
           &  $t^{13} + t^{11} + t^9$   \\
 $1^2.3$   &  $t^6 + t^4 + t^2$                    &  $t^8 + t^6 + t^4$
           &  $t^8 + t^6 + t^4$                    &  $t^{11} + t^9 + t^7$
           &  $t^{12} + t^{10} + 2t^8 + t^6 + t^4$  \\
 $2.21$    &  $t^6 + 2t^4 + t^2$                   &  $t^8 + 2t^6 + t^4$  
           &  $t^8 + 2t^6 + 2t^4 + t^2$            &  $t^{11} + 2t^9 + 2t^7 + 2t^5 + t^3$
           &  $t^{12} + 2t^{10} + 3t^8 + 3t^6 + 2t^4$  \\  
 $2^2.1$   &  $t^6$                                &  $t^8 + t^6 + 2t^4$
           &  $t^8 + t^6 + t^4$                    &  $t^{11} + t^9 + 2t^7 + t^5$
           &  $t^{12} + t^{10} + 2t^8 + t^6$    \\
 $1.31$    &  $t^5 + 2t^3 + t$                     &  $t^7 + t^5$
           &  $t^7 + 2t^5 + 2t^3$                  &  $t^{10} + t^8 + t^6$
           &  $t^{11} + 2t^9 + 3t^7 + 2t^5 + t^3$    \\
 $21.1^2$  &  $t^5 + t^3$                          &  $t^7 + 2t^5 + 2t^3$
           &  $t^7 + 2t^5 + t^3$                   &  $t^{10} + 2t^8 + 3t^6 + 2t^4 + t^2$
           &  $t^{11} + 2t^9 + 3t^7 + 2t^5 + t^3$   \\
 $31^2.$   &  $t^5$                                &  $t^7 + t^5 + t^3$
           &  $t^7$                                &  $t^{10} + t^8 + t^6$
           &  $t^{11} + t^9 + t^7$    \\  
 $1^2.21$  &  $t^4 + t^2$                          &  $t^6 + 2t^4 + t^2$
           &  $t^6 + 2t^4 + t^2$                   &  $t^9 + 2t^7 + 2t^5 + t^3$ 
           &  $t^{10} + 2t^8 + 3t^6 + 2t^4 + t^2$  \\
 $21^2.1$  &  $t^4$                                &  $t^6 + 2t^4 + 2t^2$
           &  $t^6 + t^4$                          &  $t^9 + 2t^7 + 2t^5 + t^3$
           &  $t^{10} + 2t^8 + 2t^6 + t^4$  \\
 $.5$      &  $t^6$                                &
           &  $t^8$                                &
           &  $t^{12}$      \\
 $1.2^2$   &  $t^3$                                &  $t^5$
           &  $t^5 + t^3 + t$                      &  $t^8 + t^4$
           &  $t^9 + t^7 + 2t^5 + t^3$  \\
 $2.1^3$   &  $t^2$                                &  $t^4$
           &  $t^4$                                &  $t^7 + t^5 + t^3 + t$
           &  $t^8 + t^6 + t^4 + t^2$     \\
 $1^3.2$   &  $t^3$                                &  $t^5 + t^3 + t$
           &  $t^5$                                &  $t^8 + t^6 + t^4$  
           &  $t^9 + t^7 + t^5 + t^3$     \\
 $2^21.$   &                                       &  $t^5 + t^3$ 
           &  $t^5$                                &  $t^8 + t^6$  
           &  $t^9 + t^7$    \\
 $.41$     &  $t^4 + t^2$                          &  
           &  $t^6 + t^4$                          &  
           &  $t^{10} + t^8 + t^6$   \\
 $1.21^2$  &  $t$                                  &  $t^3$
           &  $t^3 + t$                            &  $t^6 + t^4 + t^2$
           &  $t^7 + 2t^5 + 2t^3 + t$  \\
 $1^3.1^2$ &                                       &  $t^3 + t$ 
           &  $t^3$                                &  $t^6 + t^4 + t^2$
           &  $t^7 + t^5 + t^3$    \\
 $.32$     &  $t^2$                                &  
           &  $t^4 + t^2$                          &
           &  $t^8 + t^6 + t^4$     \\
 $21^3.$   &                                       &  $t$
           &                                       &  $t^4$
           &  $t^5$   \\
 $1^2.1^3$ &                                       &  $t^2$  
           &  $t^2$                                &  $t^5 + t^3 + t$  
           &  $t^6 + t^4 + t^2$   \\
 $.31^2$   &  1                                    &  
           &  $t^2$                                &
           &  $t^6 + t^4 + t^2$    \\
 $1^4.1$   &                                       &  1
           &                                       &  $t^3$
           &  $t^4$   \\
 $.2^21$   &                                       &
           &   1                                   &                   
           &  $t^4 + t^2$   \\
 $1.1^4$   &                                       &
           &                                       &   1
           &  $t$    \\
 $.21^3$   &                                       &
           &                                       &
           &  1  \\
 $1^5.$    &&&&&  \\
 $.1^5$    &&&&&   \\
\hline 
\end{tabular}
\end{sideways}
\vspace{1cm}
\end{table}

\begin{table}
\vspace*{1cm}
\begin{sideways}
\begin{tabular}{|c|cc|}
\hline
           &  $1^5.$     &  $.1^5$ \\
\hline
 $5.$      &  $t^{20}$     
           &  $t^{25}$   \\       
 $4.1$     &  $t^{19} + t^{17} + t^{15} + t^{13}$
           &  $t^{24} + t^{22} + t^{20} + t^{18} + t^{16}$  \\
 $3.2$     &  $t^{18} + t^{16} + 2t^{14} + t^{12} + t^{10}$
           &  $t^{23} + t^{21} + 2t^{19} + 2t^{17} + 2t^{15} + t^{13} + t^{11}$ \\ 
 $41.$     &  $t^{18} + t^{16} + t^{14} + t^{12}$ 
           &  $t^{23} + t^{21} + t^{19} + t^{17}$  \\
 $2.3$     &  $t^{17} + t^{15} + t^{13} + t^{11}$
           &  $t^{22} + t^{20} + 2t^{18} + 2t^{16} + 2t^{14} + t^{12} + t^{10}$  \\  
 $31.1$    &  $t^{17} + 2t^{15} + 3t^{13} + 3t^{11} + 2t^9 + t^7$
           &  $t^{22} + 2t^{20} + 3t^{18} + 3t^{16} + 3t^{14} + 2t^{12} + t^{10}$  \\
 $1.4$     &  $t^{16}$  
           &  $t^{21} + t^{19} + t^{17} + t^{15} + t^{13}$  \\
 $21.2$    &  $t^{16} + 2t^{14} + 3t^{12} + 3t^{10} + 2t^8 + t^6$  
           &  $t^{21} + 2t^{19} + 3t^{17} + 4t^{15} + 4t^{13} + 3t^{11} + 2t^9 + t^7$  \\
 $3.1^2$   &  $t^{16} + t^{14} + 2t^{12} + t^{10} + t^8$
           &  $t^{21} + t^{19} + 2t^{17} + 2t^{15} + 2t^{13} + t^{11} + t^9$   \\
 $32.$     &  $t^{16} + t^{14} + t^{12} + t^{10} + t^8$  
           &  $t^{21} + t^{19} + t^{17} + t^{15} + t^{13}$ \\
 $1^2.3$   &  $t^{15} + t^{13} + t^{11} + t^9$
           &  $t^{20} + t^{18} + 2t^{16} + 2t^{14} + 2t^{12} + t^{10} + t^8$  \\    
 $2.21$    &  $t^{15} + 2t^{13} + 2t^{11} + 2t^9 + t^7$
           &  $t^{20} + 2t^{18} + 3t^{16} + 4t^{14} + 4t^{12} + 3t^{10} + 2t^8 + t^6$  \\
 $2^2.1$   &  $t^{15} + t^{13} + 2t^{11} + 2t^9 + t^7 + t^5$
           &  $t^{20} + t^{18} + 2t^{16} + 2t^{14} + 2t^{12} + t^{10} + t^8$  \\
 $1.31$    &  $t^{14} + t^{12} + t^{10}$
           &  $t^{19} + 2t^{17} + 3t^{15} + 3t^{13} + 3t^{11} + 2t^9 + t^7$  \\
 $21.1^2$  &  $t^{14} + 2t^{12} + 3t^{10} + 3t^8 + 2t^6 + t^4$
           &  $t^{19} + 2t^{17} + 3t^{15} + 4t^{13} + 4t^{11} + 3t^9 + 2t^7 + t^5$ \\
 $31^2.$   &  $t^{14} + t^{12} + 2t^{10} + t^8 + t^6$ 
           &  $t^{19} + t^{17} + 2t^{15} + t^{13} + t^{11}$  \\
 $1^2.21$  &  $t^{13} + 2t^{11} + 2t^9 + 2t^7 + t^5$
           &  $t^{18} + 2t^{16} + 3t^{14} + 4t^{12} + 4t^{10} + 3t^8 + 2t^6 + t^4$ \\
 $21^2.1$  &  $t^{13} + 2t^{11} + 3t^9 + 3t^7 + 2t^5 + t^3$
           &  $t^{18} + 2t^{16} + 3t^{14} + 3t^{12} + 3t^{10} + 2t^8 + t^6$  \\ 
 $.5$      &      
           &  $t^{20}$    \\
 $1.2^2$   &  $t^{12} + t^8$
           &  $t^{17} + t^{15} + 2t^{13} + 2t^{11} + 2t^9 + t^7 + t^5$  \\
 $2.1^3$   &  $t^{11} + t^9 + t^7 + t^5$
           &  $t^{16} + t^{14} + 2t^{12} + 2t^{10} + 2t^8 + t^6 + t^4$  \\
 $1^3.2$   &  $t^{12} + t^{10} + 2t^8 + t^6 + t^4$ 
           &  $t^{17} + t^{15} + 2t^{13} + 2t^{11} + 2t^9 + t^7 + t^5$ \\
 $2^21.$   &  $t^{12} + t^{10} + t^8 + t^6 + t^4$
           &  $t^{17} + t^{15} + t^{13} + t^{11} + t^9$  \\
 $.41$     &  
           &  $t^{18} + t^{16} + t^{14} + t^{12}$  \\
 $1.21^2$  &  $t^{10} + t^8 + t^6$  
           &  $t^{15} + 2t^{13} + 3t^{11} + 3t^9 + 3t^7 + 2t^5 + t^3$  \\
 $1^3.1^2$ &  $t^{10} + t^8 + 2t^6 + t^4 + t^2$
           &  $t^{15} + t^{13} + 2t^{11} + 2t^9 + 2t^7 + t^5 + t^3$  \\    
 $.32$     &  
           &  $t^{16} + t^{14} + t^{12} + t^{10} + t^8$  \\
 $21^3.$   &  $t^8 + t^6 + t^4 + t^2$
           &  $t^{13} + t^{11} + t^9 + t^7$  \\
 $1^2.1^3$ &  $t^9 + t^7 + t^5 + t^3$
           &  $t^{14} + t^{12} + 2t^{10} + 2t^8 + 2t^6 + t^4 + t^2$  \\ 
 $.31^2$   &  
           &  $t^{14} + t^{12} + 2t^{10} + t^8 + t^6$  \\
 $1^4.1$   &  $t^7 + t^5 + t^3 + t$
           &  $t^{12} + t^{10} + t^8 + t^6 + t^4$  \\
 $.2^21$   &  
           &  $t^{12} + t^{10} + t^8 + t^6 + t^4$  \\
 $1.1^4$   &  $t^4$
           &  $t^9 + t^7 + t^5 + t^3 + t$  \\
 $.21^3$   &  
           &  $t^8 + t^6 + t^4 + t^2$   \\
 $1^5.$    &  1
           &  $t^5$   \\
 $.1^5$    &  
           &  1   \\  
\hline
\end{tabular}
\end{sideways}
\end{table}
\vspace{1cm}

\normalsize

\par\vspace{1cm}
\noindent  
L. Shiyuan \\  
Department of Mathematics, Tongji University \\ 
1239 Siping Road, Shanghai 200092 
E-mail: \verb|liushiyuantj@sina.com|                                                            

\par\medskip
\noindent
T. Shoji \\  
Department of Mathematics, Tongji University \\ 
1239 Siping Road, Shanghai 200092 
E-mail: \verb|shoji@tongji.edu.cn|                                                            
 \end{document}